\newtheorem{theo}{Théorème}
\newtheorem*{theo2}{Théorème}
\newtheorem*{theobis}{Théorème bis}
\newtheorem*{propo2}{Critère de Landau}
\newtheorem{cor}{Corollaire}
\newtheorem{propo}{Proposition}
\newtheorem{lemme}{Lemme}
\theoremstyle{remark}
\newtheorem*{Remarque}{Remarque}
\newtheorem*{Remarques}{Remarques}
\newtheorem*{Définition}{Définition}
\numberwithin{equation}{section}
\author{E. DELAYGUE}
\title{Critère pour l'intégralité des coefficients de Taylor des applications miroir}
\date{}
\begin{document}
\maketitle

\begin{abstract}
Nous donnons une condition nécessaire et suffisante pour que les coefficients de Taylor de séries de la forme $q(z):=z\exp(G(z)/F(z))$ soient entiers, où $F(z)$ et $G(z)+\log(z)F(z)$ sont des solutions particulières de certaines équations différentielles hypergéométriques généralisées. Ce critère est basé sur les propriétés analytiques de l'application de Landau (classiquement associée aux suites de quotients de factorielles) et il généralise les résultats de Krattenthaler-Rivoal dans \textit{On the integrality of the Taylor coefficients of mirror maps}, Duke Math. J. à paraître. Pour démontrer ce critère, nous généralisons entre autres un théorème de Dwork concernant les congruences formelles entre séries formelles dans \textit{On $p$-adic differential equations IV : generalized hypergeometric functions as $p$-adic functions in one variable}, Annales scientifiques de l'E.N.S., ce dernier ne suffisant pas dans notre cas.
\end{abstract}

\section{Introduction}

\subsection{Applications miroir}\label{section mirror map}

Les applications miroir sont des séries entières $z(q)$ inverses pour la composition de séries entières $q(z):=z\exp(G(z)/F(z))$, où $F(z)$ et $G(z)+\log(z)F(z)$ sont des solutions particulières de certaines équations différentielles hypergéométriques généralisées. Elles dépendent de nombreux param\`etres et, dans les cas les plus simples, elles s'identifient à des formes modulaires classiques définies sur différents sous-groupes de congruence de $SL_2(\mathbb{Z})$. Ces dernières apparaissant naturellement dans la théorie de Schwarz des fonctions hypergéométriques de Gauss (voir \cite{Schwarz}). Les applications miroir ont connu un regain d'intérêt à la fin des années 1980, à la suite de travaux en théorie des cordes qui ont amené des physiciens à étudier des \textit{Calabi-Yau threefolds} et à construire leur variété miroir. En particulier, cette construction est associée à une application miroir dont les coefficients de Taylor permettent dans certains cas de compter les courbes rationnelles sur les \textit{Calabi-Yau threefolds} (voir \cite{Batyrev} par exemple).

Sur de nombreux exemples en symétrie miroir, il a été observé que les coefficients de Taylor des applications miroir associées sont entiers. Cette observation a été démontrée dans de nombreux cas (voir un peu plus bas dans l'introduction) et le sujet de cet article est la démonstration d'un critère d'intégralité des coefficients de Taylor des applications miroir issues de certaines équations différentielles hypergéométriques généralisées. Ces équations s'interprètent souvent comme des équations de Picard-Fuchs de familles à un paramètre d'intersections complètes de Calabi-Yau dans des espaces projectifs à poids (voir \cite{Batyrev}). Dans les cas où les applications miroir sont des formes modulaires classiques, l'intégralité de leurs coefficients de Taylor est conséquence des théorèmes de structure classiques des algèbres de formes modulaires. Néanmoins, dans le cadre plus général de cet article, la modularité au sens usuel disparaît et on doit faire appel à des techniques différentes pour aborder ces problèmes d'intégralité.

Dans la suite, si $\textbf{e}:=(e_1,\dots,e_{q_1})$ et $\textbf{f}:=(f_1,\dots,f_{q_2})$ sont deux suites d'entiers positifs, on note $|\textbf{e}|:=\sum_{i=1}^{q_1}e_i$, $|\textbf{f}\,|:=\sum_{j=1}^{q_2}f_j$ et $\mathcal{Q}_{(\textbf{e},\textbf{f}\,)}(n):=\frac{(e_1n)!\cdots(e_{q_1}n)!}{(f_1n)!\cdots(f_{q_2}n)!}$, où $n\geq 0$. On définit les séries entières
$$
F_{(\textbf{e},\textbf{f}\,)}(z):=\sum_{n=0}^{\infty}\frac{(e_1n)!\cdots(e_{q_1}n)!}{(f_1n)!\cdots(f_{q_2}n)!}z^n
$$
et
\begin{equation}\label{definition de G facto}
G_{(\textbf{e},\textbf{f}\,)}(z):=\sum_{n=1}^{\infty}\frac{(e_1n)!\cdots(e_{q_1}n)!}{(f_1n)!\cdots(f_{q_2}n)!}\left(\sum_{i=1}^{q_1}e_iH_{e_in}-\sum_{j=1}^{q_2}f_jH_{f_jn}\right)z^n,
\end{equation}
où $H_n:=\sum_{i=1}^n\frac{1}{i}$ est le $n$-ième nombre harmonique. La série $F_{(\textbf{e},\textbf{f}\,)}(z)$ est une série hypergéométrique généralisée (\footnote{On caractérisera dans la proposition \ref{propo equiv} de la partie \ref{préliminaires} les fonctions hypergéométriques généralisées dont les coefficients peuvent se mettre sous forme de factorielles.}) et est donc solution d'une équation différentielle fuchsienne. Dans certains cas que l'on étudiera (voir la fin du paragraphe \ref{section delta facto}), on obtient, \textit{via} la méthode de Frobenius (voir \cite{Schwarz}), une base des solutions de cette équation différentielle avec au plus des singularités logarithmiques à l'origine, dont $F_{(\textbf{e},\textbf{f}\,)}(z)$ et $G_{(\textbf{e},\textbf{f}\,)}(z)+\log(z)F_{(\textbf{e},\textbf{f}\,)}(z)$.
 
Dans le contexte de la symétrie miroir, la fonction $q_{(\textbf{e},\textbf{f}\,)}(z):=z\exp(G_{(\textbf{e},\textbf{f}\,)}(z)/F_{(\textbf{e},\textbf{f}\,)}(z))$ est une \textit{coordonnée canonique} et son inverse pour la composition $z(q)$ est une \textit{application miroir}. Le but de cet article est d'établir une condition nécessaire et suffisante pour l'intégralité des coefficients des applications miroir $z(q)$, c'est-à-dire de déterminer sous quelles conditions on a $z(q)\in\mathbb{Z}[[q]]$. Dans le contexte de théorie des nombres de cet article, l'application miroir $z(q)$ et la coordonnée canonique correspondante $q(z)$ jouent exactement le même rôle car $q(z)\in z\mathbb{Z}[[z]]$ si et seulement si $z(q)\in q\mathbb{Z}[[q]]$ (voir \cite[Introduction]{Lian Yau 2}). On formulera donc le critère pour $q(z)$ uniquement mais il vaut aussi pour $z(q)$.

\subsection{\'Enoncé du critère}\label{Enonce du crit}

Avant d'énoncer le critère d'intégralité des coefficients de Taylor de $q(z)$, nous rappelons la définition de \textit{l'application de Landau} associée à un quotient de factorielles. \'Etant données $\textbf{e}:=(e_1,\dots,e_{q_1})$ et $\textbf{f}:=(f_1,\dots,f_{q_2})$ deux suites d'entiers positifs, on note $\Delta_{(\textbf{e},\textbf{f}\,)}$ la fonction de Landau associée à $\mathcal{Q}_{(\textbf{e},\textbf{f}\,)}$ définie, pour tout $x\in\mathbb{R}$, par 
$$
\Delta_{(\textbf{e},\textbf{f}\,)}(x):=\sum_{i=1}^{q_1}\lfloor e_ix\rfloor-\sum_{j=1}^{q_2}\lfloor f_jx\rfloor,
$$
où $\lfloor\,.\,\rfloor$ désigne la fonction partie entière. La fonction $\Delta_{(\textbf{e},\textbf{f}\,)}$ est constante par morceaux. Notons $M_{(\textbf{e},\textbf{f}\,)}:=\max\{e_1,\dots,e_{q_1},f_1,\dots,f_{q_2}\}$. La fonction $\Delta_{(\textbf{e},\textbf{f}\,)}$ est nulle sur $[0,1/M_{(\textbf{e},\textbf{f}\,)}[$ car $0\leq e_i/M_{(\textbf{e},\textbf{f}\,)}\leq 1$ et $0\leq f_j/M_{(\textbf{e},\textbf{f}\,)}\leq 1$. La proposition suivante montre que la fonction de Landau permet de caractériser les suites $\textbf{e}$ et $\textbf{f}$ telles que, pour tout $n\in\mathbb{N}$, $\mathcal{Q}_{(\textbf{e},\textbf{f}\,)}(n)$ est entier.

\begin{propo2}[Landau, Bober]\label{propo Landau}
Soit $\textbf{e}$ et $\textbf{f}$ deux suites d'entiers strictement positifs disjointes. On a la dichotomie suivante.
\begin{itemize}
\item[\textup{$(i)$}]{Si, pour tout $x\in[0,1]$, on a $\Delta_{(\textbf{e},\textbf{f}\,)}(x)\geq 0$, alors, pour tout $n\in\mathbb{N}$, on a $\mathcal{Q}_{(\textbf{e},\textbf{f}\,)}(n)\in\mathbb{N}$.}
\item[\textup{$(ii)$}]{S'il existe un $x\in[0,1]$ tel que $\Delta_{(\textbf{e},\textbf{f}\,)}(x)\leq -1$, alors il n'existe qu'un nombre fini de nombres premiers $p$ tels que tous les termes de la suite $\mathcal{Q}_{(\textbf{e},\textbf{f}\,)}$ soient dans $\mathbb{Z}_p$.}
\end{itemize}
\end{propo2}

\begin{Remarque}
Le point $(i)$ est dû à Landau qui énonce une condition nécessaire et suffisante dans \cite{Landau}, et le point $(ii)$ est dû à Bober (voir \cite{Bober}). 
\end{Remarque}

Dans la littérature, on peut distinguer trois résultats établissant l'intégralité des coefficients de Taylor d'applications miroir appartenant à des ensembles de plus en plus grand. 

Le premier résultat a été démontré par Lian et Yau dans \cite{Lian Yau}, dans le cas où $\mathcal{Q}_{(\textbf{e},\textbf{f}\,)}(n)=\frac{(pn)!}{(n!)^p}$, où $p$ est un nombre premier. 

Ce résultat a été généralisé par Zudilin dans \cite{Zudilin}. Soit $N\in\mathbb{N}$ et $N=p_1^{a_1}\dots p_{\ell}^{a_{\ell}}$ sa décomposition en facteurs premiers. On note $A_N$ et $B_N$ les multi-ensembles (\footnote{Un multi-ensemble est un ensemble dans lequel on autorise les répétitions des éléments.}) définis par $A_N:=\{N,\frac{N}{p_{j_1}p_{j_2}},\frac{N}{p_{j_1}p_{j_2}p_{j_3}p_{j_4}},\dots\}_{1\leq j_1<j_2<\dots\leq \ell}$ et $B_N:=\{1,\dots,1,\frac{N}{p_{j_1}},\frac{N}{p_{j_1}p_{j_2}p_{j_3}},\dots\}_{1\leq j_1<j_2<\dots\leq\ell}$, avec un nom\-bre de $1$ dans $B_n$ égal à $\varphi(N)$, où $\varphi$ est la fonction indicatrice d'Euler. Zudilin a montré que si $\textbf{e}$ est une suite constituée des éléments du multi-ensemble $\bigcup_{i=1}^kA_{N_i}$ et si $\textbf{f}$ est une suite constituée des éléments du multi-ensemble $\bigcup_{i=1}^kB_{N_i}$, où les $N_i$ sont des entiers strictement positifs ayant le même ensemble de diviseurs premiers, alors l'application miroir associée à $(\textbf{e},\textbf{f}\,)$ a tous ses coefficients de Taylor entiers (\footnote{Le cas traité par Lian et Yau correspond au choix des paramètres $k=1$ et $N_1=p$ avec $p$ premier.}). Par exemple, on peut appliquer le théorème de Zudilin aux suites $\mathcal{Q}_{(\textbf{e},\textbf{f}\,)}(n)=\frac{(4n)!}{(2n)!(n!)^2}$ et $\mathcal{Q}_{(\textbf{e},\textbf{f}\,)}(n)=\frac{(6n)!}{(3n)!(2n)!n!}$, respectivement attachées aux choix des paramètres $k=1,N_1=4$ et $k=1,N_1=6$.

Enfin, Krattenthaler et Rivoal ont démontré la conjecture de Zudilin (voir \cite[p. 605]{Zudilin}).

\begin{theo2}[Krattenthaler-Rivoal, \cite{Tanguy}]\label{Theo Tanguy}
Soit $k\in\mathbb{N}$, $k\geq 1$, et $N_1,\dots,N_k$ des entiers strictement positifs. Soit $\textbf{e}$ la suite constituée des éléments du multi-ensemble $\bigcup_{i=1}^kA_{N_i}$ et $\textbf{f}$ la suite constituée des éléments du multi-ensemble $\bigcup_{i=1}^kB_{N_i}$. Alors l'application miroir associée à $(\textbf{e},\textbf{f}\,)$ a tous ses coefficients entiers.
\end{theo2}

Il s'avère que le cas traité par Krattenthaler et Rivoal correspond exactement aux quotients de factorielles dont l'application de Landau associée est croissante sur $[0,1[$ (\footnote{Voir la fin de la partie \ref{section delta facto} pour une explication détaillée.}). On peut donc reformuler ce théorème en portant les conditions sur la fonction $\Delta_{(\textbf{e},\textbf{f}\,)}$.

\begin{theobis}[Krattenthaler-Rivoal, \cite{Tanguy}]\label{Theo Tanguy Delta}
Soit $\textbf{e}$ et $\textbf{f}$ deux suites d'entiers strictement positifs disjointes vérifiant $|\textbf{e}|=|\textbf{f}\,|$. Si $\Delta_{(\textbf{e},\textbf{f}\,)}$ est croissante sur $[0,1[$, alors $q_{(\textbf{e},\textbf{f}\,)}\in z\mathbb{Z}[[z]].$
\end{theobis}

\begin{Remarque}
La croissance de $\Delta_{(\textbf{e},\textbf{f}\,)}$ sur $[0,1[$ implique sa positivité sur $[0,1]$ et donc, d'après le critère de Landau, la suite $\mathcal{Q}_{(\textbf{e},\textbf{f}\,)}$ est à termes entiers.
\end{Remarque}

Le but de cet article est de démontrer le théorème suivant, qui caractérise complètement les applications miroir d'origine hypergéométrique (sous forme factorielle) ayant tous leur coefficients de Taylor entiers. Il contient les résultats des auteurs précédents.

\begin{theo}\label{conj equiv}
Soit $\textbf{e}$ et $\textbf{f}$ deux suites d'entiers strictement positifs disjointes, vérifiant $|\textbf{e}|=|\textbf{f}\,|$ et telles que $\mathcal{Q}_{(\textbf{e},\textbf{f}\,)}$ soit une suite à termes entiers (ce qui équivaut à $\Delta_{(\textbf{e},\textbf{f}\,)}\geq 0$ sur $[0,1]$). On a alors la dichotomie suivante.
\begin{itemize}
\item[$(i)$] Si, pour tout $x\in[1/M_{(\textbf{e},\textbf{f}\,)},1[$, on a $\Delta_{(\textbf{e},\textbf{f}\,)}(x)\geq 1$, alors $q_{(\textbf{e},\textbf{f}\,)}(z)\in z\mathbb{Z}[[z]]$.
\item[$(ii)$] S'il existe un $x\in[1/M_{(\textbf{e},\textbf{f}\,)},1[$ tel que $\Delta_{(\textbf{e},\textbf{f}\,)}(x)=0$, alors il n'existe qu'un nombre fini de nombres premiers~$p$ tels que $q_{(\textbf{e},\textbf{f}\,)}(z)\in z\mathbb{Z}_p[[z]]$.
\end{itemize}
\end{theo}

Nous allons maintenant énoncer un critère pour l'intégralité des applications \textit{de type miroir} $q_{L,(\textbf{e},\textbf{f}\,)}$ définies, pour tout entier $L\geq 1$, par $q_{L,(\textbf{e},\textbf{f}\,)}:=\exp(G_{L,(\textbf{e},\textbf{f}\,)}(z)/F_{(\textbf{e},\textbf{f}\,)}(z))$, où $G_{L,(\textbf{e},\textbf{f}\,)}$ est la série formelle 
\begin{equation}\label{définition G L}
G_{L,(\textbf{e},\textbf{f}\,)}(z):=\sum_{n=1}^{\infty}\frac{(e_1n)!\cdots(e_{q_1}n)!}{(f_1n)!\cdots(f_{q_2}n)!}H_{Ln}z^n.
\end{equation}
On a $q_{L,(\textbf{e},\textbf{f}\,)}(z)\in 1+z\mathbb{Q}[[z]]$ et $z^{-1}q_{(\textbf{e},\textbf{f}\,)}(z)=(\prod_{i=1}^{q_1}q_{e_i,(\textbf{e},\textbf{f}\,)}^{e_i}(z))/(\prod_{j=1}^{q_2}q_{f_j,(\textbf{e},\textbf{f}\,)}^{f_j}(z))$, de sorte que si, pour tout $L\in\{1,\dots,M_{(\textbf{e},\textbf{f}\,)}\}$, on a $q_{L,(\textbf{e},\textbf{f}\,)}\in\mathbb{Z}[[z]]$, alors on a $q_{(\textbf{e},\textbf{f}\,)}\in z\mathbb{Z}[[z]]$. Ainsi, le point $(i)$ du théorème \ref{conj equiv L} implique le point $(i)$ du théorème \ref{conj equiv}.

\begin{theo}\label{conj equiv L}
Soit $\textbf{e}$ et $\textbf{f}$ deux suites d'entiers strictement positifs disjointes, vérifiant $|\textbf{e}|=|\textbf{f}\,|$ et telles que $\mathcal{Q}_{(\textbf{e},\textbf{f}\,)}$ soit une suite à termes entiers (ce qui équivaut à $\Delta_{(\textbf{e},\textbf{f}\,)}\geq 0$ sur $[0,1]$). On a alors la dichotomie suivante.
\begin{itemize}
\item[$(i)$] Si, pour tout $x\in[1/M_{(\textbf{e},\textbf{f}\,)},1[$, on a $\Delta_{(\textbf{e},\textbf{f}\,)}(x)\geq 1$, alors, pour tout $L\in\{1,\dots,M_{(\textbf{e},\textbf{f}\,)}\}$, on a $q_{L,(\textbf{e},\textbf{f}\,)}(z)\in \mathbb{Z}[[z]]$.
\item[$(ii)$] S'il existe un $x\in[1/M_{(\textbf{e},\textbf{f}\,)},1[$ tel que $\Delta_{(\textbf{e},\textbf{f}\,)}(x)=0$, alors, pour tout $L\in\{1,\dots,M_{(\textbf{e},\textbf{f}\,)}\}$, il n'existe qu'un nombre fini de nombres premiers~$p$ tels que $q_{L,(\textbf{e},\textbf{f}\,)}(z)\in \mathbb{Z}_p[[z]]$.
\end{itemize}
\end{theo}

\begin{Remarques}
\begin{itemize}
\item On remarquera l'analogie entre le critère de Landau et les théorèmes \ref{conj equiv} et \ref{conj equiv L}.
\item Montrer le cas $(i)$ du théorème \ref{conj equiv} revient à montrer que la conclusion du théorème bis perdure lorsque $\Delta_{(\textbf{e},\textbf{f}\,)}$ n'est pas forcément croissante sur $[0,1[$ mais lorsque l'on a la condition plus faible $\Delta_{(\textbf{e},\textbf{f}\,)}\geq 1$ sur $[1/M_{(\textbf{e},\textbf{f}\,)},1[$. Un exemple d'une telle fonction $\Delta_{(\textbf{e},\textbf{f}\,)}$ est donné par les suites $\textbf{e}=(3,3)$ et $\textbf{f}=(2,1,1,1,1)$.
\item Le théorème \ref{conj equiv L} est une généralisation du théorème 1 de \cite{Tanguy}.
\item Si la suite $\mathcal{Q}_{(\textbf{e},\textbf{f}\,)}$ est à termes entiers alors, d'après le critère de Landau, $\Delta_{(\textbf{e},\textbf{f}\,)}$ est positive sur $[0,1]$. Ainsi, s'il existe un $x\in [1/M_{(\textbf{e},\textbf{f}\,)},1[$ tel que $\Delta_{(\textbf{e},\textbf{f}\,)}(x)<1$, alors on a $\Delta_{(\textbf{e},\textbf{f}\,)}(x)=0$. Ce qui justifie les dichotomies annoncées dans les théorèmes \ref{conj equiv} et \ref{conj equiv L}.
\item Nous verrons en fin de partie \ref{section delta facto} que $M_{(\textbf{e},\textbf{f}\,)}$ est en fait le terme maximal de la suite $\textbf{e}$ et on a $M_{(\textbf{e},\textbf{f}\,)}\geq 2$.
\end{itemize}
\end{Remarques}

\subsection{Trame de la démonstration des théorèmes \ref{conj equiv} et \ref{conj equiv L}}

Dans la partie~\ref{sectioncongruform}, on énonce et démontre notre théorème \ref{theo généralisé}, qui généralise un critère de congruences formelles de Dwork. Ce dernier était crucial pour les résultats de Lian-Yau, Zudilin et Krattenthaler-Rivoal. Le théorème~\ref{theo généralisé} est au coeur de la preuve des théorèmes \ref{conj equiv} et \ref{conj equiv L} puisque l'on montre dans la partie \ref{newreforprob} que le critère de Dwork ne suffit pas pour démontrer le point $(i)$ du théorème \ref{conj equiv L}.

Dans la partie \ref{section equiv Zp}, on ramène les théorèmes \ref{conj equiv} et \ref{conj equiv L} à la preuve d'un énoncé $p$-adique. 

La partie \ref{section sur la condition suffisante} est consacrée à la preuve de l'assertion $(i)$ du théorème \ref{conj equiv L}, ce qui est de loin la partie la plus longue et la plus technique de l'article. On doit en particulier démontrer un certain nombre d'estimations $p$-adiques fines afin d'être en position d'appliquer le théorème \ref{theo généralisé}.

Dans la partie \ref{section(ii)}, on démontre les assertions $(ii)$ des théorèmes \ref{conj equiv} et \ref{conj equiv L} qui découlent assez vite de la reformulation de ces théorèmes établie dans la partie \ref{section equiv Zp}.

On finit, dans la partie~\ref{préliminaires}, par caractériser les fonctions hypergéométriques généralisées dont les coefficients peuvent se mettre sous forme de factorielles et on décrit les sauts des applications de Landau sur $[0,1]$.

\section{Congruences formelles}\label{sectioncongruform}

La preuve de l'assertion $(i)$ du théorème \ref{conj equiv L} est basée essentiellement sur la généralisation suivante du théorème de Dwork \cite[Theorem 1, p. 296]{Dwork 1}. Soit $p$ un nombre premier. On note $\Omega$ le complété de la clôture algébrique de $\mathbb{Q}_p$ et $\mathcal{O}$ l'anneau des entiers de $\Omega$.

\begin{theo}\label{theo généralisé}
Fixons un premier $p$. Soit $(\textbf{A}_r)_{r\geq 0}$ une suite d'applications de $\mathbb{N}$ dans $\Omega\setminus\{0\}$ et $(\textbf{g}_r)_{r\geq 0}$ une suite d'applications de $\mathbb{N}$ dans $\mathcal{O}\setminus\{0\}$ telles que, pour tout $r\geq 0$, on ait
\begin{itemize}
\item[$(i)$]{$|\textbf{A}_r(0)|_p=1$;}
\item[$(ii)$]{pour tout $m\in\mathbb{N}$, on a $\textbf{A}_r(m)\in \textbf{g}_r(m)\mathcal{O}$;}
\end{itemize}
et telles qu'il existe un $k_0\in\mathbb{N}$ tel que
\begin{itemize}
\item[$(iii)$]{pour tout $m\in\mathbb{N}$ et tout $r\geq 0$,

si $v_p(m)\geq k_0$ alors, pour tout $v,u$ et $s$ dans $\mathbb{N}$ tels que $v<p$, $u<p^s$, on a
$$
\frac{\textbf{A}_r(v+up+mp^{s+1})}{\textbf{A}_r(v+up)}-\frac{\textbf{A}_{r+1}(u+mp^{s})}{\textbf{A}_{r+1}(u)}\in p^{s+k_0+1}\frac{\textbf{g}_{r+s+1}(m)}{\textbf{g}_r(v+up)}\mathcal{O};
$$

si $v_p(m)\leq k_0-1$, alors $\frac{\textbf{A}_r(mp)}{\textbf{A}_r(0)}-\frac{\textbf{A}_{r+1}(m)}{\textbf{A}_{r+1}(0)}\in p^{v_p(m)+1}\textbf{g}_{r+1}(m)\mathcal{O};$
}
\item[$(iv)$] {pour tout $k\in\{1,\dots,k_0\}$, tout $v\in\{1,\dots,p-1\}$, tout $m\in\mathbb{N}$ et tout $r\geq 0$, on a $\textbf{g}_r(v+mp^{k})\in p^{k}\textbf{g}_{r}(mp^k)\mathcal{O}$ et $\textbf{g}_r(mp^{k})\in \textbf{g}_{r+k}(m)\mathcal{O}$.
}
\end{itemize}
Alors, pour tout $a\in\{0,\dots,p-1\}$, tout $m$ et $s$ dans $\mathbb{N}$, tout $r\geq 0$ et tout $K\in\mathbb{Z}$, on a
\begin{multline*}
\textbf{S}_r(a,K,s,p,m):=\\
\sum_{j=mp^s}^{(m+1)p^s-1}\left(\textbf{A}_r(a+p(K-j))\textbf{A}_{r+1}(j)-\textbf{A}_{r+1}(K-j)\textbf{A}_r(a+jp)\right)\in p^{s+1}\textbf{g}_{s+r+1}(m)\mathcal{O},
\end{multline*}
où l'on pose, pour tout $r\geq 0$, $\textbf{A}_r(\ell)=0$ si $\ell<0$. 
\end{theo}

\begin{Remarques}
\begin{itemize}
\item Le théorème de Dwork correspond au cas où $k_0=0$, auquel cas la condition $(iii)$ est identique à la condition $(iii)$ du critère de Dwork et la condition $(iv)$ est vide. La preuve pour $k_0\geq 1$ est très différente de celle donnée par Dwork pour $k_0=0$.
\item On peut trouver une généralisation différente du critère de Dwork dans \cite{Kaori}. Cette généralisation ne semble pas adaptée à notre situation.
\item Les fonctions $\textbf{A}_r$ et $\textbf{g}_r$ peuvent dépendre de $p$. On utilisera cette souplesse.
\end{itemize}
\end{Remarques}

Dans la suite, si $k_0$ est un entier naturel, on appellera \textit{$k_0$-couple de Dwork} tout couple de suites $\left((\textbf{A}_r)_{r\geq 0},(\textbf{g}_r)_{r\geq 0}\right)$ où les $\textbf{A}_r$ sont des applications de $\mathbb{N}$ dans $\Omega\setminus\{0\}$, les $\textbf{g}_r$ sont des applications de $\mathbb{N}$ dans $\mathcal{O}\setminus\{0\}$, telles que, pour tout $r\geq 0$, $\textbf{A}_r$ et $\textbf{g}_r$ vérifient les conditions $(i)$,$(ii)$,$(iii)$ et $(iv)$ du théorème \ref{theo généralisé}.

Le but de la fin de cette partie est de démontrer le théorème \ref{theo généralisé}. Pour cela, on va avoir besoin d'un certain nombre de résultats intermédiaires.

\subsection{Lemmes préparatoires}

On énonce et démontre trois lemmes.

\begin{lemme}\label{lemme bizzare 1.1}
Soit $(\textbf{g}_r)_{r\geq 0}$ une suite d'applications de $\mathbb{N}$ dans $\mathcal{O}\setminus\{0\}$ telle qu'il existe un entier naturel $k_0\geq 1$ tel que 
\begin{itemize}
\item[$(IV)$] {pour tout $k\in\{1,\dots,k_0\}$, tout $v\in\{1,\dots,p-1\}$, tout $m\in\mathbb{N}$ et tout $r\geq 0$, on a $\textbf{g}_r(v+mp^{k})\in p^{k}\textbf{g}_{r}(mp^k)\mathcal{O}$ et $\textbf{g}_r(mp^{k})\in \textbf{g}_{r+k}(m)\mathcal{O}$.}
\end{itemize}
Alors, pour tout $w\in\mathbb{N}$, $w\geq 1$, et tout $r\geq 0$, on a $\textbf{g}_r(w)\in p^{k_0}\mathcal{O}$.
\end{lemme}

\begin{Remarque}
\begin{itemize}
\item La condition $(IV)$ du lemme \ref{lemme bizzare 1.1} est la condition $(iv)$ du théorème \ref{theo généralisé}. 
\item La condition $w\neq 0$ est essentielle car, pour $w=0$, l'hypothèse $(IV)$ ne donne pas mieux que $\textbf{g}_r(0)\in\mathcal{O}\setminus\{0\}$.
\end{itemize}
\end{Remarque}

\begin{proof}
On écrit $w:=\sum_{\ell=0}^{N}w_{\ell}p^{\ell}$ le développement $p$-adique de $w$, où $w_N\neq 0$. On va raisonner par récurrence sur $N$.
\medskip
\begin{itemize}
\item Supposons $N=0$. 
\end{itemize}
Dans ce cas, on a $w=w_0\in\{1,\dots,p-1\}$. En appliquant $(IV)$ avec $v=w,k=k_0$ et $m=0$, on obtient bien $\textbf{g}_r(w)\in p^{k_0}\textbf{g}_{r}(0)\mathcal{O}\subset p^{k_0}\mathcal{O}$.
\medskip
\begin{itemize}
\item Supposons $N\geq 1$. 
\end{itemize}
Soit $\eta$ le plus petit entier naturel tel que $w_{\eta}\neq 0$. Si $\eta=0$, alors $w_0\geq 1$ donc, d'après $(IV)$ appliquée en $v=w_0,k=1$ et $m=\sum_{\ell=0}^{N-1}w_{\ell+1}p^{\ell}$, on obtient $\textbf{g}_r(w)\in p\textbf{g}_{r}(mp)\mathcal{O}\subset p\textbf{g}_{r+1}(m)\mathcal{O}$. On a $m\geq w_Np^{N-1}>0$. Ainsi, par hypothèse de récurrence, on a $\textbf{g}_{r+1}(m)\in p^{k_0}\mathcal{O}$, d'où le résultat.

Si $\eta\geq 1$, alors $w=p\sum_{\ell=0}^{N-1}w_{\ell+1}p^{\ell}$. D'après $(IV)$ appliquée en $k=1$ et $m=\sum_{\ell=0}^{N-1}w_{\ell+1}p^{\ell}>0$, on obtient $\textbf{g}_r(w)=\textbf{g}_r(mp)\in \textbf{g}_{r+1}(m)\mathcal{O}$ et on conclut par l'hypothèse de récurrence. Ceci achève la preuve du lemme.
\end{proof}

\begin{lemme}\label{lemme bizzare 1.2}
Soit $\left((\textbf{A}_r)_{r\geq 0},(\textbf{g}_r)_{r\geq 0}\right)$ un $k_0$-couple de Dwork avec $k_0\geq 1$. Alors, pour tout $k\in\{0,\dots,k_0\}$, tout $a\in\{0,\dots,p-1\}$, tout $m\in\mathbb{N}$, tout $K\in\mathbb{Z}$ et tout $r\geq 0$, on a $\textbf{S}_r(a,K,0,p,mp^{k})\in p^{k+1}\textbf{g}_{r+1}(mp^k)\mathcal{O}$.
\end{lemme}

\begin{proof}
Soit $k\in\{0,\dots,k_0\}$, $a\in\{0,\dots,p-1\}$, $m\in\mathbb{N}$, $K\in\mathbb{Z}$ et $r\geq 0$. On a 
$$
\textbf{S}_r(a,K,0,p,mp^k)=\textbf{A}_r(a+p(K-mp^k))\textbf{A}_{r+1}(mp^k)-\textbf{A}_{r+1}(K-mp^k)\textbf{A}_r(a+mp^{k+1}).
$$
Si $K-mp^k<0$ alors $\textbf{S}_r(a,K,0,p,mp^k)=0$ et le lemme \ref{lemme bizzare 1.2} est trivialement vrai. On suppose donc $K-mp^k\geq 0$ dans la suite de la démonstration. On va distinguer plusieurs cas.
\medskip
\begin{itemize}
\item Supposons $a\neq 0$ et $k\leq k_0-1$.
\end {itemize}
\medskip
D'après $(ii)$, on a $\textbf{A}_r(a+p(K-mp^{k}))\in\textbf{g}_r(a+p(K-mp^{k}))\mathcal{O}$ et, d'après le lemme \ref{lemme bizzare 1.1}, on a $\textbf{g}_r(a+p(K-mp^{k}))\in p^{k_0}\mathcal{O}$ car $a+p(K-mp^k)\geq 1$. Comme $k+1\leq k_0$, on obtient donc
\begin{equation}\label{Etape 1-1}
\textbf{A}_r(a+p(K-mp^k))\in p^{k+1}\mathcal{O}.
\end{equation}
Toujours d'après $(ii)$, on a
\begin{equation}\label{Etape 1-2}
\textbf{A}_{r+1}(mp^k)\in\textbf{g}_{r+1}(mp^k)\mathcal{O},
\end{equation}
\begin{equation}\label{Etape 1-3}
\textbf{A}_{r+1}(K-mp^k)\in \textbf{g}_{r+1}(K-mp^k)\mathcal{O}\subset \mathcal{O}
\end{equation}
et $\textbf{A}_r(a+mp^{k+1})\in\textbf{g}_r(a+mp^{k+1})\mathcal{O}$. Comme $1\leq k+1\leq k_0$, on peut appliquer $(iv)$ en $k+1$ et on obtient $\textbf{g}_r(a+mp^{k+1})\in p^{k+1}\textbf{g}_r(mp^{k+1})\mathcal{O}\subset p^{k+1}\textbf{g}_{r+1}(mp^k)\mathcal{O}$ et donc
\begin{equation}\label{Etape 1-4}
\textbf{A}_r(a+mp^{k+1})\in p^{k+1}\textbf{g}_{r+1}(mp^k)\mathcal{O}.
\end{equation}
Ainsi, d'après \eqref{Etape 1-1}, \eqref{Etape 1-2}, \eqref{Etape 1-3} et \eqref{Etape 1-4}, on obtient bien $\textbf{S}_r(a,K,0,p,mp^k)\in p^{k+1}\textbf{g}_{r+1}(mp^k)\mathcal{O}$.
\medskip
\begin{itemize}
\item Supposons $a\neq 0,k=k_0$ et $K-mp^{k_0}>0$. 
\end{itemize}
\medskip
D'après $(ii)$, on a $\textbf{A}_r(a+p(K-mp^{k_0}))\in\textbf{g}_r(a+p(K-mp^{k_0}))\mathcal{O}$ et, d'après $(iv)$, on a \\$\textbf{g}_r(a+p(K-mp^{k_0}))\in p\textbf{g}_{r}(p(K-mp^{k_0}))\mathcal{O}$. Comme $K-mp^{k_0}>0$, le lemme \ref{lemme bizzare 1.1} implique donc que 
\begin{equation}\label{Etape 2-1}
\textbf{A}_r(a+p(K-mp^{k_0}))\in p^{k_0+1}\mathcal{O}.
\end{equation}
D'après $(ii)$, on a
\begin{equation}\label{Etape 2-2}
\textbf{A}_{r+1}(mp^{k_0})\in\textbf{g}_{r+1}(mp^{k_0})\mathcal{O}
\end{equation}
et
\begin{equation}\label{Etape 2-3}
\textbf{A}_{r+1}(K-mp^{k_0})\in \textbf{g}_{r+1}(K-mp^{k_0})\mathcal{O}\subset p^{k_0}\mathcal{O},
\end{equation}
où l'inclusion dans \eqref{Etape 2-3} est obtenue de nouveau \textit{via} le lemme \ref{lemme bizzare 1.1}.

Enfin, d'après $(ii)$, on a $\textbf{A}_r(a+mp^{k_0+1})\in\textbf{g}_r(a+mp^{k_0+1})\mathcal{O}.$ D'après $(iv)$, on obtient 
$$
\textbf{g}_r(a+mp^{k_0+1})=\textbf{g}_r(a+(mp^{k_0})p)\in p\textbf{g}_r(mp^{k_0+1})\mathcal{O}\subset p\textbf{g}_{r+1}(mp^{k_0})\mathcal{O}
$$
et donc
\begin{equation}\label{Etape 2-4}
\textbf{A}_r(a+mp^{k_0+1})\in p\textbf{g}_{r+1}(mp^{k_0})\mathcal{O}.
\end{equation}
Ainsi, d'après \eqref{Etape 2-1}, \eqref{Etape 2-2}, \eqref{Etape 2-3} et \eqref{Etape 2-4}, on obtient bien $\textbf{S}_r(a,K,0,p,mp^{k_0})\in p^{k_0+1}\textbf{g}_{r+1}(mp^{k_0})\mathcal{O}$.
\medskip
\begin{itemize}
\item Supposons $a\neq 0,k=k_0$ et $K-mp^{k_0}=0$. On a alors
\end{itemize}
\begin{align*}
\textbf{S}_r(a,K,0,p,mp^{k_0})
&=\textbf{A}_r(a)\textbf{A}_{r+1}(mp^{k_0})-\textbf{A}_{r+1}(0)\textbf{A}_r(a+mp^{k_0+1})\\
&=-\textbf{A}_r(a)\textbf{A}_{r+1}(0)\left(\frac{\textbf{A}_r(a+mp^{k_0+1})}{\textbf{A}_{r}(a)}-\frac{\textbf{A}_{r+1}(mp^{k_0})}{\textbf{A}_{r+1}(0)}\right),
\end{align*}
avec, d'après $(i)$ et $(ii)$, $\textbf{A}_r(a)\textbf{A}_{r+1}(0)\in\textbf{g}_r(a)\mathcal{O}$. Comme $v_p(mp^{k_0})\geq k_0$, on obtient, d'après $(iii)$ appliquée en $s=0$, $v=a$, $u=0$ et $mp^{k_0}$ à la place de $m$, que 
$$
\frac{\textbf{A}_r(a+mp^{k_0+1})}{\textbf{A}_{r}(a)}-\frac{\textbf{A}_{r+1}(mp^{k_0})}{\textbf{A}_{r+1}(0)}\in p^{k_0+1}\frac{\textbf{g}_{r+1}(mp^{k_0})}{\textbf{g}_r(a)}\mathcal{O}.
$$ 
La congruence voulue en découle.
\medskip
\begin{itemize}
\item Il reste un seul cas: $a=0$. 
\end{itemize}
\medskip
Dans ce cas, on a
\begin{multline*}
\textbf{S}_r(0,K,0,p,mp^k)=\textbf{A}_r(p(K-mp^k))\textbf{A}_{r+1}(mp^k)-\textbf{A}_{r+1}(K-mp^k)\textbf{A}_r(mp^{k+1})\notag\\
=\textbf{A}_r(0)\textbf{A}_{r+1}(0)\left(\frac{\textbf{A}_r(p(K-mp^k))}{\textbf{A}_r(0)}\frac{\textbf{A}_{r+1}(mp^k)}{\textbf{A}_{r+1}(0)}-\frac{\textbf{A}_{r+1}(K-mp^k)}{\textbf{A}_{r+1}(0)}\frac{\textbf{A}_r(mp^{k+1})}{\textbf{A}_r(0)}\right).\label{expli quoiquoi}
\end{multline*}
On écrit le terme de droite sous la forme
\begin{multline*}
\frac{\textbf{A}_r(p(K-mp^k))}{\textbf{A}_r(0)}\frac{\textbf{A}_{r+1}(mp^k)}{\textbf{A}_{r+1}(0)}-\frac{\textbf{A}_{r+1}(K-mp^k)}{\textbf{A}_{r+1}(0)}\frac{\textbf{A}_r(mp^{k+1})}{\textbf{A}_r(0)}=\\
\frac{\textbf{A}_{r+1}(mp^k)}{\textbf{A}_{r+1}(0)}\left(\frac{\textbf{A}_r(p(K-mp^k))}{\textbf{A}_r(0)}-\frac{\textbf{A}_{r+1}(K-mp^k)}{\textbf{A}_{r+1}(0)}\right)\\-\frac{\textbf{A}_{r+1}(K-mp^k)}{\textbf{A}_{r+1}(0)}\left(\frac{\textbf{A}_r(mp^{k+1})}{\textbf{A}_r(0)}-\frac{\textbf{A}_{r+1}(mp^{k})}{\textbf{A}_{r+1}(0)}\right).
\end{multline*}
Remarquons maintenant que l'on a 
\begin{equation}\label{remarquons}
\frac{\textbf{A}_r(mp^{k+1})}{\textbf{A}_r(0)}-\frac{\textbf{A}_{r+1}(mp^{k})}{\textbf{A}_{r+1}(0)}\in p^{k+1}\textbf{g}_{r+1}(mp^k)\mathcal{O}.
\end{equation}
En effet, si $v_p(mp^k)\geq k_0$ alors, en appliquant $(iii)$ avec $v=u=s=0$ et $mp^k$ à la place de $m$, on obtient
$$
\frac{\textbf{A}_r(mp^{k+1})}{\textbf{A}_r(0)}-\frac{\textbf{A}_{r+1}(mp^{k})}{\textbf{A}_{r+1}(0)}\in p^{k_0+1}\frac{\textbf{g}_{r+1}(mp^k)}{\textbf{g}_r(0)}\mathcal{O}.
$$
De plus, d'après $(i)$ et $(ii)$, $\textbf{g}_r(0)$ est inversible dans $\mathcal{O}$ et, par hypothèse, on a $k\leq k_0$, donc on obtient bien \eqref{remarquons} dans ce cas. Si en revanche $v_p(mp^k)\leq k_0-1$ alors, d'après $(iii)$ avec $mp^k$ à la place de $m$, on obtient
$$
\frac{\textbf{A}_r(mp^{k+1})}{\textbf{A}_r(0)}-\frac{\textbf{A}_{r+1}(mp^{k})}{\textbf{A}_{r+1}(0)}\in p^{v_p(mp^k)+1}\textbf{g}_{r+1}(mp^k)\mathcal{O}\subset  p^{k+1}\textbf{g}_{r+1}(mp^k)\mathcal{O},
$$
ce qui achève la vérification de \eqref{remarquons}.

Ainsi, si $K-mp^k=0$, alors on a bien $\textbf{S}_r(0,K,0,p,mp^k)\in p^{k+1}\textbf{g}_{r+1}(mp^k)\mathcal{O}$. Si $K-mp^k>0$, alors on a
\begin{equation}\label{remarquons 203}
\frac{\textbf{A}_r(p(K-mp^k))}{\textbf{A}_r(0)}-\frac{\textbf{A}_{r+1}(K-mp^k)}{\textbf{A}_{r+1}(0)}\in p\textbf{g}_{r+1}(K-mp^k)\mathcal{O}.
\end{equation}
En effet, si $v_p(K-mp^k)\geq k_0$ alors, en appliquant $(iii)$ avec $v=u=s=0$ et $K-mp^k$ à la place de $m$, on obtient
$$
\frac{\textbf{A}_r(p(K-mp^k))}{\textbf{A}_r(0)}-\frac{\textbf{A}_{r+1}(K-mp^{k})}{\textbf{A}_{r+1}(0)}\in p^{k_0+1}\frac{\textbf{g}_{r+1}(K-mp^k)}{\textbf{g}_r(0)}\mathcal{O}.
$$
De plus, d'après $(i)$ et $(ii)$, $\textbf{g}_r(0)$ est inversible dans $\mathcal{O}$ et, par hypothèse, on a $k_0\geq 1$, donc on obtient bien \eqref{remarquons 203} dans ce cas. Si en revanche $v_p(K-mp^k)\leq k_0-1$ alors, d'après $(iii)$ avec $K-mp^k$ à la place de $m$, on obtient
$$
\frac{\textbf{A}_r(p(K-mp^k))}{\textbf{A}_r(0)}-\frac{\textbf{A}_{r+1}(K-mp^{k})}{\textbf{A}_{r+1}(0)}\in p^{v_p(K-mp^k)+1}\textbf{g}_{r+1}(K-mp^k)\mathcal{O}\subset  p\textbf{g}_{r+1}(K-mp^k)\mathcal{O},
$$
ce qui achève la vérification de \eqref{remarquons 203}.

D'après le lemme \ref{lemme bizzare 1.1}, on a $\textbf{g}_{r+1}(K-mp^k)\in p^{k_0}\mathcal{O}$. D'après $(i)$ et $(ii)$, on a de plus $\frac{\textbf{A}_{r+1}(mp^k)}{\textbf{A}_{r+1}(0)}\in\textbf{g}_{r+1}(mp^k)\mathcal{O}$, donc
$$
\textbf{A}_r(0)\textbf{A}_{r+1}(0)\frac{\textbf{A}_{r+1}(mp^k)}{\textbf{A}_{r+1}(0)}\left(\frac{\textbf{A}_r(p(K-mp^k))}{\textbf{A}_r(0)}-\frac{\textbf{A}_{r+1}(K-mp^k)}{\textbf{A}_{r+1}(0)}\right)\in p^{k_0+1}\textbf{g}_{r+1}(mp^k)\mathcal{O}
$$
et comme également $\frac{\textbf{A}_{r+1}(K-mp^k)}{\textbf{A}_{r+1}(0)}\in\textbf{g}_{r+1}(K-mp^k)\mathcal{O}\subset\mathcal{O}$, on a
$$
\textbf{A}_r(0)\textbf{A}_{r+1}(0)\frac{\textbf{A}_{r+1}(K-mp^k)}{\textbf{A}_{r+1}(0)}\cdot\left(\frac{\textbf{A}_r(mp^{k+1})}{\textbf{A}_r(0)}-\frac{\textbf{A}_{r+1}(mp^k)}{\textbf{A}_{r+1}(0)}\right)\in p^{k+1}\textbf{g}_{r+1}(mp^k)\mathcal{O}.
$$
On a donc bien $\textbf{S}_r(0,K,0,p,mp^k)\in p^{k+1}\textbf{g}_{r+1}(mp^k)\mathcal{O}$,
ce qui achève la preuve du lemme.
\end{proof}

Enfin, on aura besoin du lemme suivant.

\begin{lemme}\label{lemme bizzare 2}
Soit $\left((\textbf{A}_r)_{r\geq 0},(\textbf{g}_r)_{r\geq 0}\right)$ un $k_0$-couple de Dwork avec $k_0\geq 1$. Soit $a\in\{0,\dots,p-1\}$, $K\in\mathbb{Z}$ et $s\in\mathbb{N}$, $s\geq 1$. Si, pour tout $s_0<s$ , pour tout $m\in\mathbb{N}$ et tout $r\geq 0$, on a
\begin{equation}\label{condi one}
\textbf{S}_r(a,K,s_0,p,mp^{k_0})\in p^{s_0+k_0+1}\textbf{g}_{s_0+r+1}(mp^{k_0})\mathcal{O},
\end{equation}
et 
\begin{equation}\label{condi two}
\textbf{S}_r(a,K,s_0,p,m)\in p^{s_0+1}\textbf{g}_{s_0+r+1}(m)\mathcal{O}.
\end{equation}
Alors, pour tout $m\in\mathbb{N}$ et tout $r\geq 0$, on a $\textbf{S}_r(a,K,s,p,m)\in p^{s+1}\textbf{g}_{s+r+1}(m)\mathcal{O}$.
\end{lemme}

\begin{proof}
On va d'abord montrer que, pour tout $m\in\mathbb{N}$, tout $r\geq 1$ et tout $k\leq\min(s,k_0)$, on a $\textbf{S}_r(a,K,s,p,m)\equiv \textbf{S}_{r}(a,K,s-k,p,mp^{k})\mod p^{s+1}\textbf{g}_{s+r+1}(m)\mathcal{O}$.
Pour cela, on va raisonner par récurrence sur $k$.

Si $k=0$, il n'y a rien à montrer.

Supposons que $\min(s,k_0)\geq k\geq 1$. Par hypothèse de récurrence, on a
\begin{equation}\label{**2}
\textbf{S}_r(a,K,s,p,m)\equiv \textbf{S}_{r}(a,K,s-k+1,p,mp^{k-1})\mod p^{s+1}\textbf{g}_{s+r+1}(m)\mathcal{O}.
\end{equation}
Comme $\{mp^s,\dots,mp^s+p^{s-k+1}-1\}=\bigcup_{v=0}^{p-1}\{mp^s+vp^{s-k},\dots,mp^s+vp^{s-k}+p^{s-k}-1\}$, on a
\begin{equation}\label{*2}
\textbf{S}_{r}(a,K,s-k+1,p,mp^{k-1})=\sum_{v=0}^{p-1}\textbf{S}_{r}(a,K,s-k,p,v+mp^{k}).
\end{equation}
Comme $s\geq k\geq 1$, on a $0\leq s-k<s$ et on obtient, d'après \eqref{condi two}, que, pour tout $v\in\{0,\dots,p-1\}$, on a $\textbf{S}_r(a,K,s-k,p,v+mp^{k})\in p^{s-k+1}\textbf{g}_{s-k+r+1}(v+mp^{k})\mathcal{O}$. Comme également $k\in\{1,\dots,k_0\}$ on obtient d'après $(iv)$, que, pour tout $v\in\{1,\dots,p-1\}$, on a 
$$
\textbf{g}_{s-k+r+1}(v+mp^{k})\in p^k\textbf{g}_{s-k+r+1}(mp^k)\mathcal{O}\subset p^{k}\textbf{g}_{s+r+1}(m)\mathcal{O}.
$$
En utilisant ces informations dans \eqref{*2}, on obtient (il reste seulement le terme pour $v=0$):
$$
\textbf{S}_r(a,K,s-k+1,p,mp^{k-1})\equiv \textbf{S}_r(a,K,s-k,p,mp^{k})\mod p^{s+1}\textbf{g}_{s+r+1}(m)\mathcal{O},
$$
ce qui, joint à \eqref{**2}, montre que $\textbf{S}_r(a,K,s,p,m)\equiv \textbf{S}_r(a,K,s-k,p,mp^{k})\mod p^{s+1}\textbf{g}_{s+r+1}(m)\mathcal{O}$ et achève la récurrence sur $k$.
\medskip

On a donc $\textbf{S}_r(a,K,s,p,m)\equiv \textbf{S}_r(a,K,s-\min(s,k_0),p,mp^{\min(s,k_0)})\mod p^{s+1}\textbf{g}_{s+r+1}(m)\mathcal{O}$.
\medskip

Si $s<k_0$, alors $\textbf{S}_r(a,K,s,p,m)\equiv \textbf{S}_r(a,K,0,p,mp^s)\mod p^{s+1}\textbf{g}_{s+r+1}(m)\mathcal{O}$. En appliquant le lemme \ref{lemme bizzare 1.2} en $k=s$, puis $(iv)$ en $k=s$, on obtient $\textbf{S}_r(a,K,0,p,mp^s)\in p^{s+1}\textbf{g}_{r+1}(mp^s)\mathcal{O}\subset p^{s+1}\textbf{g}_{s+r+1}(m)\mathcal{O}$ et donc $\textbf{S}_r(a,K,s,p,m)\in p^{s+1}\textbf{g}_{s+r+1}(m)\mathcal{O}$, comme voulu.

Si maintenant $s\geq k_0$ alors $\textbf{S}_r(a,K,s,p,m)\equiv \textbf{S}_r(a,K,s-k_0,p,mp^{k_0})\mod p^{s+1}\textbf{g}_{s+r+1}(m)\mathcal{O}$. Or, comme $s\geq k_0\geq 1$, on a $0\leq s-k_0<s$ et on obtient (d'après \eqref{condi one} et $(iv)$ appliquée en $k=k_0$) que $\textbf{S}_r(a,K,s-k_0,p,mp^{k_0})\in p^{s-k_0+k_0+1}\textbf{g}_{s-k_0+r+1}(mp^{k_0})\mathcal{O}\subset p^{s+1}\textbf{g}_{s+r+1}(m)\mathcal{O}$, ce qui achève la preuve du lemme.
\end{proof}

\subsection{Démonstration du théorème \ref{theo généralisé}}

Comme dit au début de la partie \ref{sectioncongruform}, le cas $k_0=0$ correspond au critère de Dwork. Il nous suffit donc de démontrer le cas où il existe un $k_0\geq 1$ tel que les hypothèses $(iii)$ et $(iv)$ soient vérifiées. En particulier, on utilisera les lemmes \ref{lemme bizzare 1.1}, \ref{lemme bizzare 1.2} et \ref{lemme bizzare 2}. La trame de la démonstration s'inspire de celle du théorème de Dwork, mais elle diffère assez sensiblement dans les détails. 

On doit montrer que, pour tout $a\in\{0,\dots,p-1\}$, tout $m$ et $s$ dans $\mathbb{N}$, tout $r\geq 0$ et tout $K\in\mathbb{Z}$, on a
\begin{equation}\label{TheBut}
\textbf{S}_r(a,K,s,p,m)\in p^{s+1}\textbf{g}_{s+r+1}(m)\mathcal{O}.
\end{equation}

Pour tout $a\in\{0,\dots,p-1\}$, tout $j\in\mathbb{N}$, tout $r\geq0$ et tout $K\in\mathbb{Z}$, on note
$$
\textbf{U}_r(a,K,p,j):=\textbf{A}_r(a+p(K-j))\textbf{A}_{r+1}(j)-\textbf{A}_{r+1}(K-j)\textbf{A}_r(a+jp),
$$
de sorte que $\textbf{S}_r(a,K,s,p,m)=\sum_{j=mp^s}^{(m+1)p^s-1}\textbf{U}_r(a,K,p,j)$. 
\medskip
Pour tout $s\in\mathbb{N}$, $s\geq 1$, on note $\alpha_s$ l'assertion suivante: pour tout $a\in\{0,\dots,p-1\}$, tout $u\in\{0,\dots,s-1\}$, tout $m\in\mathbb{N}$, tout $r\geq0$ et tout $K\in\mathbb{Z}$, on a les congruences
$$
\textbf{S}_r(a,K,u,p,m)\in p^{u+1}\textbf{g}_{u+r+1}(m)\mathcal{O}\quad\textup{et}\quad\textbf{S}_r(a,K,u,p,mp^{k_0})\in p^{u+k_0+1}\textbf{g}_{u+r+1}(mp^{k_0})\mathcal{O}.
$$
\medskip
Pour tout $s\in\mathbb{N}$, $s\geq 1$, et tout $t\in\{0,\dots,s\}$, on note $\beta_{t,s}$ l'assertion suivante: pour tout $a\in\{0,\dots,p-1\}$, tout $m\in\mathbb{N}$, tout $r\geq0$ et tout $K\in\mathbb{Z}$, on a la congruence
\begin{multline*}
\textbf{S}_r(a,K+mp^{s+k_0},s,p,mp^{k_0})\equiv\\ \sum_{j=0}^{p^{s-t}-1}\frac{\textbf{A}_{t+r+1}(j+mp^{s-t+k_0})}{\textbf{A}_{t+r+1}(j)}\textbf{S}_r(a,K,t,p,j)\mod p^{s+k_0+1}\textbf{g}_{s+r+1}(mp^{k_0})\mathcal{O}.
\end{multline*}

Nous allons maintenant énoncer trois lemmes permettant de montrer \eqref{TheBut}.

\begin{lemme}\label{Assertion 1}
L'assertion $\alpha_1$ est vraie.
\end{lemme}

\begin{lemme}\label{Assertion 2}
Pour tout $s,r$ et $m$ dans $\mathbb{N}$, tout $a\in\{0,\dots,p-1\}$, tout $j\in\{0,\dots,p^s-1\}$ et tout $K\in\mathbb{Z}$, on a
$$
\textbf{U}_r(a,K+mp^{s+k_0},p,j+mp^{s+k_0})\equiv\frac{\textbf{A}_{r+1}(j+mp^{s+k_0})}{\textbf{A}_{r+1}(j)}\textbf{U}_r(a,K,p,j)\mod p^{s+k_0+1}\textbf{g}_{s+r+1}(mp^{k_0})\mathcal{O}.
$$
\end{lemme}

\begin{lemme}\label{Assertion 3}
Pour tout $s\in\mathbb{N}$, $s\geq 1$, et tout $t\in\{0,\dots,s-1\}$, les assertions $\alpha_s$ et $\beta_{t,s}$ impliquent l'assertion $\beta_{t+1,s}$.
\end{lemme}

Avant de prouver ces lemmes, nous allons montrer que leur validité implique bien \eqref{TheBut}. On va montrer que $\alpha_s$ est vraie pour tout $s\geq 1$ par récurrence sur $s$, ce qui donnera en particulier la conclusion du théorème \ref{theo généralisé}. D'après le lemme \ref{Assertion 1}, $\alpha_1$ est vraie. Supposons $\alpha_s$ vraie pour un $s\geq 1$ fixé. On remarque que $\beta_{0,s}$ est l'assertion
\begin{multline*}
\beta_{0,s}:\textbf{S}_r(a,K+mp^{s+k_0},s,p,mp^{k_0})\equiv\\
\sum_{j=0}^{p^s-1}\frac{\textbf{A}_{r+1}(j+mp^{s+k_0})}{\textbf{A}_{r+1}(j)}\textbf{S}_r(a,K,0,p,j)\mod p^{s+k_0+1}\textbf{g}_{s+r+1}(mp^{k_0})\mathcal{O}.
\end{multline*}
Comme $\textbf{S}_r(a,K,0,p,j)=\textbf{U}_r(a,K,p,j)$, on a
$$
\sum_{j=0}^{p^s-1}\frac{\textbf{A}_{r+1}(j+mp^{s+k_0})}{\textbf{A}_{r+1}(j)}\textbf{S}_r(a,K,0,p,j)=\sum_{j=0}^{p^s-1}\frac{\textbf{A}_{r+1}(j+mp^{s+k_0})}{\textbf{A}_{r+1}(j)}\textbf{U}_r(a,K,p,j)
$$
et, d'après le lemme \ref{Assertion 2}, on obtient
\begin{align*}
\sum_{j=0}^{p^s-1}&\frac{\textbf{A}_{r+1}(j+mp^{s+k_0})}{\textbf{A}_{r+1}(j)}\textbf{U}_r(a,K,p,j)\\
&\equiv\sum_{j=0}^{p^s-1}\textbf{U}_r(a,K+mp^{s+k_0},p,j+mp^{s+k_0})\mod p^{s+k_0+1}\textbf{g}_{s+r+1}(mp^{k_0})\mathcal{O}\\
&\equiv \textbf{S}_r(a,K+mp^{s+k_0},s,p,mp^{k_0})\mod p^{s+k_0+1}\textbf{g}_{s+r+1}(mp^{k_0})\mathcal{O}.
\end{align*}
Ainsi, l'assertion $\beta_{0,s}$ est vraie. On obtient alors, par le lemme \ref{Assertion 3}, la validité de $\beta_{1,s}$. Par itération du lemme \ref{Assertion 3}, on obtient finalement $\beta_{s,s}$, qui est l'assertion
\begin{equation}\label{assert beta s}
\textbf{S}_r(a,K+mp^{s+k_0},s,p,mp^{k_0})\equiv\frac{\textbf{A}_{s+r+1}(mp^{k_0})}{\textbf{A}_{s+r+1}(0)}\textbf{S}_r(a,K,s,p,0)\mod p^{s+k_0+1}\textbf{g}_{s+r+1}(mp^{k_0})\mathcal{O}.
\end{equation}
\medskip
Nous allons maintenant montrer que, pour tout $a\in\{0,\dots,p-1\}$, tout $s\in\mathbb{N}$, tout $r\geq0$ et tout $K\in\mathbb{Z}$, on a $\textbf{S}_r(a,K,s,p,0)\in p^{s+k_0+1}\mathcal{O}$.
Soit $T\in\mathbb{N}$ tel que $(T+1)p^s>K$. On a
\begin{align}
\sum_{m=0}^T \textbf{S}_r(a,K,s,p,m)
&=\sum_{m=0}^T\sum_{j=mp^s}^{(m+1)p^s-1}\left(\textbf{A}_r(a+p(K-j))\textbf{A}_{r+1}(j)-\textbf{A}_{r+1}(K-j)\textbf{A}_r(a+jp)\right)\notag\\
&=\sum_{j=0}^K\left(\textbf{A}_r(a+p(K-j))\textbf{A}_{r+1}(j)-\textbf{A}_{r+1}(K-j)\textbf{A}_r(a+jp)\right)\label{rappel conv}\\
&=0\label{expli somme 0},
\end{align}
où l'on a utilisé dans \eqref{rappel conv} le fait que $\textbf{A}_r(\ell)=0$ pour $\ell<0$, et \eqref{expli somme 0} a lieu car le terme de la somme \eqref{rappel conv} est changé en son opposé lorsque l'on change l'indice $j$ en $K-j$. 

Comme $\alpha_s$ est vraie, on obtient \textit{via} le lemme \ref{lemme bizzare 2} (avec $s_0=u$): $\textbf{S}_r(a,K,s,p,m)\in p^{s+1}\textbf{g}_{s+r+1}(m)\mathcal{O}$, ce qui prouve la première partie de l'assertion $\alpha_{s+1}$. Si $m>0$, alors d'après le lemme \ref{lemme bizzare 1.1}, on a $\textbf{g}_{s+r+1}(m)\in p^{k_0}\mathcal{O}$. Donc, on obtient, pour tout $m>0$, que $\textbf{S}_r(a,K,s,p,m)\in p^{s+k_0+1}\mathcal{O}$. D'où également, $\textbf{S}_r(a,K,s,p,0)=-\sum_{m=1}^T \textbf{S}_r(a,K,s,p,m)\in p^{s+k_0+1}\mathcal{O}$. De plus, d'après les conditions $(i)$ et $(ii)$, pour tout $m\in\mathbb{N}$ et tout $r\geq 0$, on a $\frac{\textbf{A}_{s+r+1}(mp^{k_0})}{\textbf{A}_{s+r+1}(0)}\in \textbf{g}_{s+r+1}(mp^{k_0})\mathcal{O}$.
Ainsi, d'après $\beta_{s,s}$ (\textit{i.e.} \eqref{assert beta s}), pour tout $m\in\mathbb{N}$ et tout $K\in\mathbb{Z}$, on obtient $\textbf{S}_r(a,K+mp^{s+k_0},s,p,mp^{k_0})\in p^{s+k_0+1}\textbf{g}_{s+r+1}(mp^{k_0})\mathcal{O}$ et donc $\alpha_{s+1}$ est vraie. Ceci achève la récurrence sur $s$. Ainsi, pour tout $s\in\mathbb{N}$, $s\geq 1$, $\alpha_s$ est vraie et, en particulier, $\textbf{S}_r(a,K,s,p,m)\in p^{s+1}\textbf{g}_{s+r+1}(m)\mathcal{O}$. Il ne reste plus qu'à démontrer les lemmes \ref{Assertion 1}, \ref{Assertion 2} et \ref{Assertion 3}.

\begin{proof}[Démonstration du lemme \ref{Assertion 1}]
En appliquant le lemme \ref{lemme bizzare 1.2}  avec $k=0$ et $k=k_0$, on obtient respectivement $\textbf{S}_r(a,K,0,p,m)\in p\textbf{g}_{r+1}(m)\mathcal{O}$ et $\textbf{S}_r(a,K,0,p,mp^{k_0})\in p^{k_0+1}\textbf{g}_{r+1}(mp^{k_0})\mathcal{O}$,
ce qui n'est rien d'autre que l'assertion $\alpha_1$ et termine la preuve du lemme \ref{Assertion 1}.
\end{proof}

\begin{proof}[Démonstration du lemme \ref{Assertion 2}]
On a
\begin{multline}\label{wesh wesh yo}
\textbf{U}_r(a,K+mp^{s+k_0},p,j+mp^{s+k_0})-\frac{\textbf{A}_{r+1}(j+mp^{s+k_0})}{\textbf{A}_{r+1}(j)}\textbf{U}_r(a,K,p,j)\\
=-\textbf{A}_{r+1}(K-j)\textbf{A}_r(a+jp)\left(\frac{\textbf{A}_r(a+jp+mp^{s+k_0+1})}{\textbf{A}_r(a+jp)}-\frac{\textbf{A}_{r+1}(j+mp^{s+k_0})}{\textbf{A}_{r+1}(j)}\right).
\end{multline}
Comme $a<p,j<p^s$ et $v_p(mp^{k_0})\geq k_0$, l'hypothèse $(iii)$ implique que le terme de droite de l'égalité \eqref{wesh wesh yo} est dans $\textbf{A}_{r+1}(K-j)\textbf{A}_r(a+jp)p^{s+k_0+1}\frac{\textbf{g}_{s+r+1}(mp^{k_0})}{\textbf{g}_r(a+jp)}\mathcal{O}$. De plus, d'après la condition $(ii)$, on a $\textbf{A}_r(a+jp)\in\textbf{g}_r(a+jp)\mathcal{O}$ et $\textbf{A}_{r+1}(K-j)\in\textbf{g}_{r+1}(K-j)\mathcal{O}\subset\mathcal{O}$. Ces estimations montrent que le membre de gauche de \eqref{wesh wesh yo} est dans $p^{s+k_0+1}\textbf{g}_{s+r+1}(mp^{k_0})\mathcal{O}$, comme voulu.
\end{proof}

\begin{proof}[Démonstration du lemme \ref{Assertion 3}]
Pour $t<s$, on écrit $\beta_{t,s}$ sous la forme
\begin{multline}
\textbf{S}_r(a,K+mp^{s+k_0},s,p,mp^{k_0})\equiv\\
\sum_{i=0}^{p-1}\sum_{\mu=0}^{p^{s-t-1}-1}\frac{\textbf{A}_{t+r+1}(i+\mu p+mp^{s-t+k_0})}{\textbf{A}_{t+r+1}(i+\mu p)}\textbf{S}_r(a,K,t,p,i+\mu p)\mod p^{s+k_0+1}\textbf{g}_{s+r+1}(mp^{k_0})\mathcal{O}.\label{beta t s}
\end{multline}
On veut montrer $\beta_{t+1,s}$, qui s'écrit
\begin{multline*}
\textbf{S}_r(a,K+mp^{s+k_0},s,p,mp^{k_0})\equiv\\
\sum_{\mu=0}^{p^{s-t-1}-1}\frac{\textbf{A}_{t+r+2}(\mu+mp^{s-t+k_0-1})}{\textbf{A}_{t+r+2}(\mu)}\textbf{S}_r(a,K,t+1,p,\mu)\mod p^{s+k_0+1}\textbf{g}_{s+r+1}(mp^{k_0})\mathcal{O}.
\end{multline*}

On remarque que $\textbf{S}_r(a,K,t+1,p,\mu)=\sum_{i=0}^{p-1}\textbf{S}_r(a,K,t,p,i+\mu p)$. Ainsi, en posant
$$
X:=\textbf{S}_r(a,K+mp^{s+k_0},s,p,mp^{k_0})-\sum_{\mu=0}^{p^{s-t-1}-1}\frac{\textbf{A}_{t+r+2}(\mu+mp^{s-t+k_0-1})}{\textbf{A}_{t+r+2}(\mu)}\sum_{i=0}^{p-1}\textbf{S}_r(a,K,t,p,i+\mu p),
$$
il ne reste plus qu'à montrer que $X\in p^{s+k_0+1}\textbf{g}_{s+r+1}(mp^{k_0})\mathcal{O}$. D'après $\beta_{t,s}$ sous la forme \eqref{beta t s}, on obtient que
\begin{multline*}
X\equiv \sum_{i=0}^{p-1}\sum_{\mu=0}^{p^{s-t-1}-1}\textbf{S}_r(a,K,t,p,i+\mu p)\\
\times\left(\frac{\textbf{A}_{t+r+1}(i+\mu p+mp^{s-t+k_0})}{\textbf{A}_{t+r+1}(i+\mu p)}-\frac{\textbf{A}_{t+r+2}(\mu+mp^{s-t-1+k_0})}{\textbf{A}_{t+r+2}(\mu)}\right)\mod p^{s+k_0+1}\textbf{g}_{s+r+1}(mp^{k_0})\mathcal{O}.
\end{multline*}
Or, d'après l'hypothèse $(iii)$ appliquée avec $s-t-1$ pour $s$ et $mp^{k_0}$ pour $m$, on a
$$
\frac{\textbf{A}_{t+r+1}(i+\mu p+mp^{s-t+k_0})}{\textbf{A}_{t+r+1}(i+\mu p)}-\frac{\textbf{A}_{t+r+2}(\mu+mp^{s-t-1+k_0})}{\textbf{A}_{t+r+2}(\mu)}\in p^{s-t+k_0}\frac{\textbf{g}_{s+r+1}(mp^{k_0})}{\textbf{g}_{t+r+1}(i+\mu p)}\mathcal{O}.
$$
De plus, comme $t<s$ et puisque $\alpha_s$ est vraie, on a $\textbf{S}_r(a,K,t,p,i+\mu p)\in p^{t+1}\textbf{g}_{t+r+1}(i+\mu p)\mathcal{O}$. Donc on a bien $X\in p^{s+k_0+1}\textbf{g}_{s+r+1}(mp^{k_0})\mathcal{O}$. Ceci achève la preuve du lemme \ref{Assertion 3} et donc celle du théorème \ref{theo généralisé}. 
\end{proof}

\section{Un énoncé $p$-adique équivalent au critère}\label{section equiv Zp}

On se place sous les hypothèses des théorèmes \ref{conj equiv} et \ref{conj equiv L}. On fixe $L\in\{1,\dots,M_{(\textbf{e},\textbf{f}\,)}\}$ dans cette partie. On rappelle que $q_{(\textbf{e},\textbf{f}\,)}\in z\mathbb{Z}[[z]]$, respectivement $q_{L,(\textbf{e},\textbf{f}\,)}\in\mathbb{Z}[[z]]$, si, et seulement si, pour tout nombre premier $p$, on a $q_{(\textbf{e},\textbf{f}\,)}\in z\mathbb{Z}_p[[z]]$, respectivement $q_{L,(\textbf{e},\textbf{f}\,)}\in\mathbb{Z}_p[[z]]$. 

Nous allons définir, pour tout nombre premier $p$, des éléments $\Phi_p(a+Kp)$ et $\Phi_{L,p}(a+Kp)$ de $\mathbb{Q}_p$, où $a\in\{0,\dots,p-1\}$ et $K\in\mathbb{N}$, et montrer que $q_{(\textbf{e},\textbf{f}\,)}\in z\mathbb{Z}_p[[z]]$, respectivement $q_{L,(\textbf{e},\textbf{f}\,)}\in\mathbb{Z}_p[[z]]$, si, et seulement si, pour tout nombre premier $p$, tout $a\in\{0,\dots,p-1\}$ et tout $K\in\mathbb{N}$, on a $\Phi_p(a+Kp)\in p\mathbb{Z}_p$, respectivement $\Phi_{L,p}(a+Kp)\in p\mathbb{Z}_p$. 

Pour alléger les notations, on notera $\Delta:=\Delta_{(\textbf{e},\textbf{f}\,)}$, $\mathcal{Q}:=\mathcal{Q}_{(\textbf{e},\textbf{f}\,)}$, $F:=F_{(\textbf{e},\textbf{f}\,)}$, $G:=G_{(\textbf{e},\textbf{f}\,)}$, $G_L:=G_{L,(\textbf{e},\textbf{f}\,)}$, $q:=q_{(\textbf{e},\textbf{f}\,)}$ et $q_L:=q_{L,(\textbf{e},\textbf{f}\,)}$, comme dans toute la suite de l'article. On fixe un nombre premier $p$ dans cette partie.

Avant de donner les preuves des théorèmes \ref{conj equiv} et \ref{conj equiv L}, on va les reformuler. Le résultat classique suivant est dû à Dieudonné et Dwork (voir \cite[Chap. VI, Sec. 2, Lemma 3]{Kobliz}; \cite[Chap. 14, Sec. 2]{Lang}).

\begin{lemme}\label{réduction modulo p}
Soit $F(z)$ une série formelle dans $1+z\mathbb{Q}[[z]]$. Alors $F(z)\in1+z\mathbb{Z}_p[[z]]$ si et seulement si $\frac{F(z^p)}{F(z)^p}\in 1+pz\mathbb{Z}_p[[z]]$.
\end{lemme}

On déduit de ce lemme le corollaire suivant (voir \cite[Lemma 5, p. 610]{Zudilin}), qui va nous permettre \og d'éliminer\fg\,l'exponentielle dans les expressions $q(z)=z\exp(G(z)/F(z))$ et $q_L(z)=\exp(G_L(z)/F(z))$.

\begin{cor}\label{cor reduction}
Soit $f(z)\in z\mathbb{Q}[[z]]$. On a $e^{f(z)}\in 1+z\mathbb{Z}_p[[z]]$ si et seulement si $f(z^p)-pf(z)\in pz\mathbb{Z}_p[[z]]$.
\end{cor}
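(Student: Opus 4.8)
Le plan est d'appliquer le lemme \ref{r�duction modulo p} � la s�rie $F(z):=e^{f(z)}$. Comme $f(z)\in z\mathbb{Q}[[z]]$, l'exponentielle formelle donne bien $F(z)\in 1+z\mathbb{Q}[[z]]$, de sorte que $e^{f(z)}\in 1+z\mathbb{Z}_p[[z]]$ si et seulement si $\frac{F(z^p)}{F(z)^p}\in 1+pz\mathbb{Z}_p[[z]]$. On calcule alors
$$
\frac{F(z^p)}{F(z)^p}=\frac{e^{f(z^p)}}{e^{pf(z)}}=e^{f(z^p)-pf(z)}.
$$
En posant $g(z):=f(z^p)-pf(z)\in z\mathbb{Q}[[z]]$, il suffit donc d'�tablir l'�quivalence interm�diaire: $e^{g(z)}\in 1+pz\mathbb{Z}_p[[z]]$ si et seulement si $g(z)\in pz\mathbb{Z}_p[[z]]$.

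Pour le sens direct, je supposerais $g(z)=p\,h(z)$ avec $h(z)\in z\mathbb{Z}_p[[z]]$, puis j'�crirais $e^{g}-1=\sum_{n\geq 1}\frac{p^n}{n!}h(z)^n$; comme $h(z)^n\in z^n\mathbb{Z}_p[[z]]$, la somme est bien d�finie coefficient par coefficient. Le point cl� est l'estimation $v_p(p^n/n!)=n-v_p(n!)\geq 1$ pour tout $n\geq 1$: en utilisant la formule de Legendre $v_p(n!)=(n-s_p(n))/(p-1)$, o� $s_p(n)\geq 1$ d�signe la somme des chiffres $p$-adiques de $n$, on obtient $v_p(p^n/n!)=(n(p-2)+s_p(n))/(p-1)\geq 1$. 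Ainsi $\frac{p^n}{n!}\in p\mathbb{Z}_p$, chaque terme de la somme est dans $pz\mathbb{Z}_p[[z]]$, d'o� $e^{g}\in 1+pz\mathbb{Z}_p[[z]]$.

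Pour la r�ciproque, j'�crirais $e^{g(z)}=1+p\,w(z)$ avec $w(z)\in z\mathbb{Z}_p[[z]]$ et je reconstituerais $g$ via le logarithme formel: $g(z)=\log(1+pw(z))=\sum_{n\geq 1}\frac{(-1)^{n-1}p^n}{n}w(z)^n$, somme encore bien d�finie coefficient par coefficient puisque $w(z)^n\in z^n\mathbb{Z}_p[[z]]$. L'estimation requise est ici $v_p(p^n/n)=n-v_p(n)\geq 1$ pour $n\geq 1$, cons�quence de $v_p(n)\leq\log_p n\leq n-1$, cette derni�re in�galit� d�coulant de $p^{n-1}\geq 2^{n-1}\geq n$. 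On conclut que $g(z)\in pz\mathbb{Z}_p[[z]]$, et la combinaison des deux sens avec le lemme \ref{r�duction modulo p} fournit le corollaire.

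La seule difficult� technique r�side dans les deux estimations $p$-adiques $v_p(p^n/n!)\geq 1$ et $v_p(p^n/n)\geq 1$, toutes deux �l�mentaires une fois invoqu�e la formule de Legendre; le reste n'est qu'une manipulation formelle des s�ries $\exp$ et $\log$, ins�r�e dans l'�quivalence de Dieudonn�-Dwork.
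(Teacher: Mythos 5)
Votre preuve est correcte et suit essentiellement la d\'emarche du papier, qui se contente de renvoyer \`a Zudilin pour la d\'eduction du corollaire \`a partir du lemme de Dieudonn\'e--Dwork : vous appliquez ce lemme \`a $F=e^{f}$, ramenez tout \`a l'\'equivalence $e^{g}\in 1+pz\mathbb{Z}_p[[z]]\Leftrightarrow g\in pz\mathbb{Z}_p[[z]]$, puis la justifiez par les estimations $v_p(p^n/n!)\geq 1$ et $v_p(p^n/n)\geq 1$, qui sont exactes. Rien \`a redire.
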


D'après les identités \eqref{definition de G facto} et \eqref{définition G L} définissant respectivement $G$ et $G_L$, on a $G(0)=G_L(0)=0$ et donc $G(z)/F(z)$ et $G_L(z)/F(z)$ sont dans $z\mathbb{Q}[[z]]$. Ainsi, d'après le corollaire \ref{cor reduction}, on a $q(z)\in z\mathbb{Z}_p[[z]]$, respectivement $q_L(z)\in\mathbb{Z}_p[[z]]$, si, et seulement si $\frac{G}{F}(z^p)-p\frac{G}{F}(z)\in pz\mathbb{Z}_p[[z]]$, respectivement $\frac{G_L}{F}(z^p)-p\frac{G_L}{F}(z)\in pz\mathbb{Z}_p[[z]]$. 

Or, comme $\mathcal{Q}$ est une suite à termes entiers, on a $F(z)\in 1+z\mathbb{Z}[[z]]\subset 1+z\mathbb{Z}_p[[z]]$. Ainsi, $q(z)\in z\mathbb{Z}_p[[z]]$, respectivement $q_L(z)\in\mathbb{Z}_p[[z]]$, si, et seulement si on a $F(z)G(z^p)-pF(z^p)G(z)\in pz\mathbb{Z}_p[[z]]$, respectivement $F(z)G_L(z^p)-pF(z^p)G_L(z)\in pz\mathbb{Z}_p[[z]]$.

D'après l'identité \eqref{definition de G facto} définissant $G$, le coefficient de $z^{a+Kp}$ dans $F(z)G(z^p)-pF(z^p)G(z)$ est 
\begin{multline*}
\Phi_p(a+Kp):=\\
\sum_{j=0}^K\mathcal{Q}(K-j)\mathcal{Q}(a+jp)\left(\sum_{i=1}^{q_1}e_i(H_{e_i(K-j)}-pH_{e_i(a+jp)})-\sum_{i=1}^{q_2}f_i(H_{f_i(K-j)}-pH_{f_i(a+jp)})\right)
\end{multline*}
et, d'après l'identité \eqref{définition G L} définissant $G_L$, le coefficient de $z^{a+Kp}$ dans $F(z)G_L(z^p)-pF(z^p)G_L(z)$ est 
$$
\Phi_{L,p}(a+Kp):=\sum_{j=0}^K\mathcal{Q}(K-j)\mathcal{Q}(a+jp)(H_{L(K-j)}-pH_{L(a+jp)}).
$$

On a donc $q(z)\in z\mathbb{Z}_p[[z]]$, respectivement $q_L(z)\in\mathbb{Z}_p[[z]]$, si, et seulement si, pour tout $a\in\{0,\dots,p-1\}$ et tout $K\in\mathbb{N}$, on a $\Phi_p(a+Kp)\in p\mathbb{Z}_p$, respectivement $\Phi_{L,p}(a+Kp)\in p\mathbb{Z}_p$.

\section{Démonstration des cas $(i)$ des théorèmes \ref{conj equiv} et \ref{conj equiv L}}\label{section sur la condition suffisante}

On se place sous les hypothèses des thèorèmes \ref{conj equiv} et \ref{conj equiv L}. On suppose de plus que, pour tout $x\in[1/M_{(\textbf{e},\textbf{f}\,)},1[$, on a $\Delta(x)\geq 1$. Comme il a été dit en partie \ref{Enonce du crit}, le point $(i)$ du théorème \ref{conj equiv L} entraîne la validité du point $(i)$ du théorème \ref{conj equiv}. Le but de cette partie est donc de montrer que, pour tout $L\in\{1,\dots,M_{(\textbf{e},\textbf{f}\,)}\}$, on a $q_L(z)\in\mathbb{Z}[[z]]$. D'après la partie \ref{section equiv Zp}, il nous suffit de montrer que, pour tout $L\in\{1,\dots,M_{(\textbf{e},\textbf{f}\,)}\}$, tout nombre premier $p$, tout $a\in\{0,\dots,p-1\}$ et tout $K\in\mathbb{N}$, on a $\Phi_{L,p}(a+Kp)\in p\mathbb{Z}_p$. On fixe $L\in\{1,\dots,M_{(\textbf{e},\textbf{f}\,)}\}$ dans cette partie.

\subsection{Nouvelle reformulation du problème}\label{newreforprob}

Pour tout premier $p$, tout $a\in\{0,\dots,p-1\}$ et tout $K,s$ et $m$ dans $\mathbb{N}$, on définit 
$$
S(a,K,s,p,m):=\sum_{j=mp^s}^{(m+1)p^s-1}\left(\mathcal{Q}(a+jp)\mathcal{Q}(K-j)-\mathcal{Q}(j)\mathcal{Q}(a+(K-j)p)\right),
$$
où l'on pose $\mathcal{Q}(\ell)=0$ si $\ell$ est un entier strictement négatif.

Le but de cette partie est de produire, pour tout nombre premier $p$, une fonction $g_p$ de $\mathbb{N}$ dans $\mathbb{Z}_p$ telle que: si pour tout premier $p$, tout $a\in\{0,\dots,p-1\}$ et tout $K,s$ et $m$ dans $\mathbb{N}$, on a $S(a,K,s,p,m)\in p^{s+1}g_p(m)\mathbb{Z}_p$, alors on a $\Phi_{L,p}(a+pK)\in p\mathbb{Z}_p$.
Démontrer le cas $(i)$ du théorème~\ref{conj equiv L} reviendra alors à minorer convenablement la valuation $p$-adique de $S(a,K,s,p,m)$ pour tout nombre premier $p$. Cette méthode de réduction est une adaptation de l'approche du problème faite par Dwork dans \cite{Dwork 1}.

\subsubsection{Une réécriture de $\Phi_{L,p}(a+Kp)$ modulo $p\mathbb{Z}_p$}\label{section une réécriture de}

Cette étape est l'analogue d'une réécriture effectuée par Krattenthaler et Rivoal dans la partie 2 de \cite{Tanguy}. On fixe un nombre premier $p$. Nous allons montrer que
\begin{equation}\label{*}
\Phi_{L,p}(a+Kp)\equiv\sum_{j=0}^{K}H_{Lj}\big(\mathcal{Q}(a+jp)\mathcal{Q}(K-j)-\mathcal{Q}(j)\mathcal{Q}(a+(K-j)p)\big)\mod\;p\mathbb{Z}_p.
\end{equation}

Pour tout $a\in\{0,\dots,p-1\}$ et tout $j\in\mathbb{N}$, on a
\begin{align}
pH_{L(a+jp)}
&=p\left(\sum_{i=1}^{Ljp}\frac{1}{i}+\sum_{i=1}^{La}\frac{1}{Ljp+i}\right)\notag\\
&\equiv p\left(\sum_{i=1}^{Lj}\frac{1}{ip}+\sum_{i=1}^{\lfloor La/p\rfloor}\frac{1}{Ljp+ip}\right)\mod p\mathbb{Z}_p\notag\\
&\equiv H_{Lj}+\sum_{i=1}^{\lfloor La/p\rfloor}\frac{1}{Lj+i}\mod p\mathbb{Z}_p\label{(25)}.
\end{align}

Nous avons besoin d'un résultat, que l'on démontrera plus loin dans une forme plus générale (lemme \ref{lemme 24 généralisé}, partie \ref{demo lemme 10}) :

Pour tout $L\in\{1,\dots,M_{(\textbf{e},\textbf{f}\,)}\}$, tout $a\in\{0,\dots,p-1\}$ et tout $j\in\mathbb{N}$, on a 
\begin{equation}\label{lemme Q rho 1}
\mathcal{Q}(a+jp)\sum_{i=1}^{\lfloor La/p\rfloor}\frac{1}{Lj+i}\in p\mathbb{Z}_p.
\end{equation}

En appliquant \eqref{lemme Q rho 1} à \eqref{(25)}, on obtient $\mathcal{Q}(a+jp)pH_{L(a+jp)}\equiv\mathcal{Q}(a+jp)H_{Lj}\mod\;p\mathbb{Z}_p$ et comme $\mathcal{Q}(K-j)\in\mathbb{Z}_p$, cela donne
\begin{align*}
\Phi_{L,p}(a+Kp)
&=\sum_{j=0}^K\mathcal{Q}(K-j)\mathcal{Q}(a+jp)(H_{L(K-j)}-pH_{L(a+jp)})\\
&\equiv\sum_{j=0}^{K}\mathcal{Q}(K-j)\mathcal{Q}(a+jp)(H_{L(K-j)}-H_{Lj})\mod p\mathbb{Z}_p\\
&\equiv\sum_{j=0}^{K}H_{Lj}\left(\mathcal{Q}(a+jp)\mathcal{Q}(K-j)-\mathcal{Q}(j)\mathcal{Q}(a+(K-j)p)\right)\mod\;p\mathbb{Z}_p,
\end{align*}
ce qui est bien l'équation \eqref{*} attendue.

On utilise maintenant un lemme combinatoire dû à Dwork (voir \cite[Lemma 4.2, p. 308]{Dwork 1}) qui nous permet d'écrire 
$$
\sum_{j=0}^{K}H_{Lj}\left(\mathcal{Q}(a+jp)\mathcal{Q}(K-j)-\mathcal{Q}(j)\mathcal{Q}(a+(K-j)p)\right)=\sum_{s=0}^r\sum_{m=0}^{p^{r+1-s}-1}W_L(a,K,s,p,m),
$$
où $r$ est tel que $K<p^r$, et $W_L(a,K,s,p,m):=(H_{Lmp^s}-H_{L\lfloor m/p\rfloor p^{s+1}})S(a,K,s,p,m)$. Si l'on montre que, pour tout $m$ et $s$ dans $\mathbb{N}$, on a $W_L(a,K,s,p,m)\in p\mathbb{Z}_p$, alors on aura bien $\Phi_{L,p}(a+Kp)\in p\mathbb{Z}_p$, comme voulu.

Dans la suite de l'article, on notera $\{\cdot\}$ la fonction partie fractionnaire. Pour tout $m\in\mathbb{N}$, on pose $\mu_p(m):=\sum_{\ell=1}^{\infty}\textbf{1}_{[1/M_{(\textbf{e},\textbf{f}\,)},1[}(\{m/p^{\ell}\})$, où $\textbf{1}_{[1/M_{(\textbf{e},\textbf{f}\,)},1[}$ est la fonction caractéristique de $[1/M_{(\textbf{e},\textbf{f}\,)},1[$. Pour tout $m\in\mathbb{N}$, on pose $g_p(m):=p^{\mu_p(m)}$. On utilise maintenant le lemme suivant que l'on démontre dans la partie \ref{demo lemme 10}.

\begin{lemme}\label{valuation diff H}
Pour tout nombre premier $p$, tout $L\in\{1,\dots,M_{(\textbf{e},\textbf{f}\,)}\}$ et tout $s$ et $m$ dans $\mathbb{N}$, on a
$$
p^{s+1}g_p(m)\left(H_{Lmp^s}-H_{L\lfloor m/p\rfloor p^{s+1}}\right)\in p\mathbb{Z}_p.
$$
\end{lemme}

D'après le lemme \ref{valuation diff H}, si on montre que, pour tout $a\in\{0,\dots,p-1\}$, tout $K,s$ et $m$ dans $\mathbb{N}$, on a $S(a,K,s,p,m)\in p^{s+1}g_p(m)\mathbb{Z}_p$, alors, on aura $q_L(z)\in\mathbb{Z}_p[[z]]$, ce qui est la reformulation annoncée.

\subsubsection{Démonstration de \eqref{lemme Q rho 1} et du lemme \ref{valuation diff H}}\label{demo lemme 10}

Nous allons énoncer un résultat plus général permettant de démontrer le résultat \eqref{lemme Q rho 1} et le lemme \ref{valuation diff H}.

\begin{lemme}\label{lemme 24 généralisé}
Soit $s\in\mathbb{N}$, $s\geq 1$, $a\in\{0,\dots,p^s-1\}$, $M\geq 1$ et $m\in\mathbb{N}$. Soit $L\in\{1,\dots,M\}$. Si $\lfloor La/p^s\rfloor\geq 1$ alors, pour tout $u\in\{1,\dots,\lfloor La/p^s\rfloor\}$ et tout $\ell\in\{s,\dots,s+v_p(Lm+u)\}$, on a $\left\{\frac{a+mp^s}{p^{\ell}}\right\}\geq\frac{1}{M}$.
\end{lemme}

\begin{proof}
On note $m=\sum_{j=0}^{\infty}m_jp^j$ le développement $p$-adique de $m$. On a 
$$
\left\{\frac{a+mp^s}{p^{\ell}}\right\}=\frac{a+p^s\sum_{j=0}^{\ell-s-1}m_jp^j}{p^{\ell}}.
$$
On a $p^{\ell-s}\mid(u+Lm)$ et donc $p^{\ell-s}\mid (u+Lm-L(\sum_{j=\ell-s}^{\infty}m_jp^j))=(u+L(\sum_{j=0}^{\ell-s-1}m_jp^j))$. Ainsi, on obtient 
$$
p^{\ell-s}\leq u+L\left(\sum_{j=0}^{\ell-s-1}m_jp^j\right)\leq\frac{1}{p^s}La+L\left(\sum_{j=0}^{\ell-s-1}m_jp^j\right)=\frac{L}{p^{s-\ell}}\left\{\frac{a+mp^s}{p^{\ell}}\right\}\leq\frac{M}{p^{s-\ell}}\left\{\frac{a+mp^s}{p^{\ell}}\right\}
$$
et on a bien $\left\{\frac{a+mp^s}{p^{\ell}}\right\}\geq\frac{1}{M}$.
\end{proof}

Nous allons maintenant appliquer le lemme \ref{lemme 24 généralisé} pour démontrer \eqref{lemme Q rho 1} en utilisant le fait que, pour tout $n\in\mathbb{N}$, on a $v_p\left(\mathcal{Q}(n)\right)=\sum_{\ell=1}^{\infty}\Delta\big(\{\frac{n}{p^{\ell}}\}\big)$. En effet, pour tout entier positif $c$, on a $\lfloor cx\rfloor=\lfloor c\{x\}\rfloor+c\lfloor x\rfloor$ et donc $\Delta(x)=\Delta(\{x\})+\left(|\textbf{e}|-|\textbf{f}\,|\right)\lfloor x\rfloor$. Ainsi, on a $|\textbf{e}|=|\textbf{f}\,|$ si et seulement si $\Delta$ est $1$-périodique. On rappelle que si $m$ est un entier naturel, on a la formule $v_p((m)!)=\sum_{\ell=1}^{\infty}\big\lfloor \frac{m}{p^{\ell}}\big\rfloor$. Ainsi, on obtient bien 
$$
v_p\left(\mathcal{Q}(n)\right)=v_p\left(\frac{(e_1n)!\cdots(e_{q_1}n)!}{(f_1n)!\cdots(f_{q_2}n)!}\right)=\sum_{\ell=1}^{\infty}\Delta\left(\frac{n}{p^{\ell}}\right)=\sum_{\ell=1}^{\infty}\Delta\left(\left\{\frac{n}{p^{\ell}}\right\}\right).
$$

\begin{proof}[Démonstration de \eqref{lemme Q rho 1}]
Soit $L\in\{1,\dots,M_{(\textbf{e},\textbf{f}\,)}\}$, $a\in\{0,\dots,p-1\}$ et $j\in\mathbb{N}$. Il faut montrer que $\mathcal{Q}(a+jp)\sum_{i=1}^{\lfloor La/p\rfloor}\frac{1}{Lj+i}\in p\mathbb{Z}_p$. Si $\lfloor La/p\rfloor=0$, c'est évident. Supposons que $\lfloor La/p\rfloor\geq 1$. En appliquant le lemme \ref{lemme 24 généralisé} avec $s=1$, $m=j$ et $M=M_{(\textbf{e},\textbf{f}\,)}$, on obtient que, pour tout $i\in\{1,\dots,\lfloor La/p\rfloor\}$ et tout $\ell\in\{1,\dots,1+v_p(i+Lj)\}$, on a $\{(a+jp)/p^{\ell}\}\geq 1/M_{(\textbf{e},\textbf{f}\,)}$ et donc $\Delta(\{(a+jp)/p^{\ell}\})\geq 1$. Comme $\Delta$ est positive sur $\mathbb{R}$, cela donne
$$
v_p(\mathcal{Q}(a+jp))=\sum_{\ell=1}^{\infty}\Delta\left(\left\{\frac{a+jp}{p^{\ell}}\right\}\right)\geq\sum_{\ell=1}^{1+v_p(Lj+i)}\Delta\left(\left\{\frac{a+jp}{p^{\ell}}\right\}\right)\geq 1+v_p(Lj+i),
$$
ce qui achève la preuve de \eqref{lemme Q rho 1}.
\end{proof}

\begin{proof}[Démonstration du lemme \ref{valuation diff H}]
Soit $L\in\{1,\dots,M_{(\textbf{e},\textbf{f}\,)}\}$ et $m$ et $s$ dans $\mathbb{N}$. Il faut montrer que $p^{s+1}g_p(m)(H_{Lmp^s}-H_{L\lfloor m/p\rfloor p^{s+1}})\in p\mathbb{Z}_p$. On écrit $m=b+qp$, où $b\in\{0,\dots,p-1\}$ et $q\in\mathbb{N}$. On a alors $Lmp^s=Lbp^s+Lqp^{s+1}$ et $L\lfloor m/p\rfloor p^{s+1}=Lqp^{s+1}$. Ainsi, on obtient
$$
H_{Lmp^s}-H_{L\lfloor m/p\rfloor p^{s+1}}=\sum_{j=1}^{Lbp^s}\frac{1}{Lqp^{s+1}+j}\equiv\sum_{i=1}^{\lfloor Lb/p\rfloor}\frac{1}{Lqp^{s+1}+ip^{s+1}}\mod \frac{1}{p^s}\mathbb{Z}_p
$$
et donc $p^{s+1}g_p(m)(H_{Lmp^s}-H_{L\lfloor m/p\rfloor p^{s+1}})\equiv g_p(b+qp)\sum_{i=1}^{\lfloor Lb/p\rfloor}\frac{1}{Lq+i}\mod p\mathbb{Z}_p$. Il nous reste à montrer que $g_p(b+qp)\sum_{i=1}^{\lfloor Lb/p\rfloor}\frac{1}{Lq+i}\in p\mathbb{Z}_p$. Si $\lfloor Lb/p\rfloor=0$, c'est évident. Supposons que $\lfloor Lb/p\rfloor\geq 1$. En appliquant le lemme \ref{lemme 24 généralisé} avec $s=1$, $M=M_{(\textbf{e},\textbf{f}\,)}$, $a=b$ et $q$ à la place de $m$, on obtient que, pour tout $i\in\{1,\dots,\lfloor Lb/p\rfloor\}$ et tout $\ell\in\{1,\dots,1+v_p(i+Lq)\}$, on a $\{(b+qp)/p^{\ell}\}\geq1/M_{(\textbf{e},\textbf{f}\,)}$ et donc
$$
v_p(g_p(b+qp))=\sum_{\ell=1}^{\infty}\textbf{1}_{[1/M_{(\textbf{e},\textbf{f}\,)},1[}\left(\left\{\frac{b+qp}{p^{\ell}}\right\}\right)\geq\sum_{\ell=1}^{1+v_p(Lq+i)}\textbf{1}_{[1/M_{(\textbf{e},\textbf{f}\,)},1[}\left(\left\{\frac{b+qp}{p^{\ell}}\right\}\right)\geq 1+v_p(Lq+i),
$$
ce qui achève la preuve du lemme.
\end{proof}

\subsection{Application du théorème \ref{theo généralisé}}

La stratégie utilisée par Krattenthaler et Rivoal dans \cite{Tanguy} est d'appliquer le critère de Dwork (cas $k_0=0$ du théorème \ref{theo généralisé}) avec le choix des fonctions $A_r=g_r=\mathcal{Q}$ pour tout $r\geq 0$. Cette stratégie ne marche cependant pas toujours quand $\Delta$ n'est pas croissante sur $[0,1]$, même si on ne prend pas forcément $A_r=g_r=\mathcal{Q}$ pour tout $r\geq 0$. En effet, si on dispose de deux suites $(A_r)_{r\geq 0}$ et $(g_r)_{r\geq 0}$ vérifiant les conditions du critère de Dwork et telles qu'il existe un $r\geq 0$ tel que $A_r=A_{r+1}=\mathcal{Q}$ et, pour tout $s,r$ et $m$ dans $\mathbb{N}$, $p^{s+1}g_{s+r+1}(m)(H_{Lmp^s}-H_{L\lfloor m/p\rfloor p^{s+1}})\in\mathcal{O}$, alors $(iii)$ nous dit que, pour tout nombre premier $p$, tout $v<p$, tout $s,r$ et $m$ dans $\mathbb{N}$ et tout $u<p^s$, on a
\begin{equation}\label{contre-exemple}
(H_{Lmp^s}-H_{L\lfloor m/p\rfloor p^{s+1}})\left(\frac{\mathcal{Q}(v+up+mp^{s+1})}{\mathcal{Q}(v+up)}-\frac{\mathcal{Q}(u+mp^{s})}{\mathcal{Q}(u)}\right)\in \frac{1}{\textbf{g}_r(v+up)}\mathcal{O}\subset\frac{1}{\mathcal{Q}(v+up)}\mathcal{O}.
\end{equation}
Or \eqref{contre-exemple} n'est pas vérifiée par la suite $\mathcal{Q}_{(\textbf{e},\textbf{f}\,)}$ définie par $\textbf{e}=(10,5)$ et $\textbf{f}=(4,4,3,2,1,1)$. En effet, pour $p=3$, $L=10$, $v=1$, $s=1$, $m=1$ et $u=2$ on obtient 
$$ v_3\left(H_{30}\left(\frac{\mathcal{Q}_{(\textbf{e},\textbf{f}\,)}(16)}{\mathcal{Q}_{(\textbf{e},\textbf{f}\,)}(7)}-\frac{\mathcal{Q}_{(\textbf{e},\textbf{f}\,)}(5)}{\mathcal{Q}_{(\textbf{e},\textbf{f}\,)}(2)}\right)\right)=-4\quad\textup{et}\quad v_3\left(\frac{1}{\mathcal{Q}_{(\textbf{e},\textbf{f}\,)}(7)}\right)=-3.
$$

Précisons la manière dont nous allons utiliser le théorème \ref{theo généralisé} pour terminer la démonstration du cas $(i)$ du théorème \ref{conj equiv L}. Nous allons montrer dans les parties suivantes qu'il existe un entier naturel $\lambda_p$ tel qu'en posant $\textbf{A}_r=\mathcal{Q}$ et $\textbf{g}_r=g_p$ pour tout $r\geq 0$, $((\textbf{A}_r)_{r\geq 0},(\textbf{g}_r)_{r\geq 0})$ est un $\lambda_p$-couple de Dwork. En appliquant alors le théorème \ref{theo généralisé}, on obtiendra bien $S(a,K,s,p,m)\in p^{s+1}g_p(m)\mathbb{Z}_p$, comme voulu.

Dans les parties suivantes, on vérifie les hypothèses d'application du théorème \ref{theo généralisé}.

\subsection{Vérification des conditions $(i)$, $(ii)$ et $(iv)$ du théorème \ref{theo généralisé}}\label{partie 10}

On fixe $p$ un nombre premier et on note $g:=g_p$ et $\mu:=\mu_p$. Pour tout $r\geq 0$, on pose $\textbf{A}_r=\mathcal{Q}$ et $\textbf{g}_r=g$. On définit $\lambda_p$ comme étant l'unique entier naturel vérifiant $p^{\lambda_p}<M_{(\textbf{e},\textbf{f}\,)}\leq p^{\lambda_p+1}$. On va montrer dans cette partie que les suites $(\textbf{A}_r)_{r\geq 0}$ et $(\textbf{g}_r)_{r\geq 0}$ vérifient les conditions $(i)$, $(ii)$ et $(iv)$ du théorème \ref{theo généralisé} avec $k_0=\lambda_p$. Pour cela, nous allons uniquement nous servir de l'inégalité $p^{\lambda_p}<M_{(\textbf{e},\textbf{f}\,)}$. L'inégalité $M_{(\textbf{e},\textbf{f}\,)}\leq p^{\lambda_p+1}$ nous servira à démontrer la condition $(iii)$ du théorème \ref{theo généralisé} dans la partie suivante.
\medskip
\begin{itemize}
\item Vérification de $(i)$ et $(ii)$.
\end{itemize}
\medskip
Pour tout $r$ et $m$ dans $\mathbb{N}$, on a $|\textbf{A}_r(0)|_p=|\mathcal{Q}(0)|_p=1$. De plus, $v_p(\textbf{g}_r(m))=\mu(m)\geq 0$, donc on a bien $\textbf{g}_r(m)\in \mathbb{Z}_p\setminus\{0\}$. Il ne reste plus qu'à montrer que $\textbf{A}_r(m)\in \textbf{g}_r(m)\mathbb{Z}_p$, ce qui revient donc à montrer qu'on a $\mu(m)\leq v_p(\mathcal{Q}(m))$. C'est bien le cas puisque, pour tout $\ell\in\mathbb{N}$, $\ell\geq 1$, on a $\Delta\left(\left\{\frac{m}{p^{\ell}}\right\}\right)\geq \textsc{1}_{[1/M_{(\textbf{e},\textbf{f}\,)},1[}\left(\left\{\frac{m}{p^{\ell}}\right\}\right)$, car $\Delta(x)\geq 1$ pour $1>x\geq 1/M_{(\textbf{e},\textbf{f}\,)}$. On obtient bien $v_p(\mathcal{Q}(m))=\sum_{\ell=1}^{\infty}\Delta\left(\left\{\frac{m}{p^{\ell}}\right\}\right)\geq\sum_{\ell=1}^{\infty}\textsc{1}_{[1/M_{(\textbf{e},\textbf{f}\,)},1[}\left(\left\{\frac{m}{p^{\ell}}\right\}\right)=\mu(m)$. D'où le résultat.
\medskip
\begin{itemize}
\item Vérification de $(iv)$ avec $k_0=\lambda_p$.
\end{itemize}
\medskip
Si $\lambda_p=0$, il n'y a rien à vérifier. Supposons $\lambda_p\geq 1$. Soit $k\in\{1,\dots,\lambda_p\}$, $v\in\{1,\dots,p-1\}$ et $m\in\mathbb{N}$. On a $p^{\lambda_p}<M_{(\textbf{e},\textbf{f}\,)}$ donc $p^k<M_{(\textbf{e},\textbf{f}\,)}$. Ainsi $1/M_{(\textbf{e},\textbf{f}\,)}<1/p^k$ et donc, pour tout $\ell\in\{1,\dots,k\}$, on a $\textsc{1}_{[1/M_{(\textbf{e},\textbf{f}\,)},1[}\left(\left\{\frac{v+mp^k}{p^{\ell}}\right\}\right)=\textsc{1}_{[1/M_{(\textbf{e},\textbf{f}\,)},1[}\left(\left\{\frac{v}{p^{\ell}}\right\}\right)=1$. On a alors 
$$ v_p(g(v+mp^k))=\sum_{\ell=1}^{\infty}\textsc{1}_{[1/M_{(\textbf{e},\textbf{f}\,)},1[}\left(\left\{\frac{v+mp^k}{p^{\ell}}\right\}\right)=k+\sum_{\ell=k+1}^{\infty}\textsc{1}_{[1/M_{(\textbf{e},\textbf{f}\,)},1[}\left(\left\{\frac{v+mp^k}{p^{\ell}}\right\}\right).
$$
Pour tout $\ell\geq k+1$, on a $\left\{\frac{v+mp^k}{p^{\ell}}\right\}>\left\{\frac{mp^k}{p^{\ell}}\right\}$. En effet, on écrit $m=cp^{\ell-k}+d$ où $c\in\mathbb{N}$ et $d\in\{0,\dots,p^{\ell-k}-1\}$, et on obtient bien $\left\{\frac{v+mp^k}{p^{\ell}}\right\}=\left\{\frac{v+dp^k}{p^{\ell}}\right\}=\frac{v+dp^k}{p^{\ell}}>\frac{dp^k}{p^{\ell}}=\left\{\frac{dp^k}{p^{\ell}}\right\}=\left\{\frac{mp^k}{p^{\ell}}\right\}$. 

Donc $v_p(g(v+mp^k))\geq k+\sum_{\ell=k+1}^{\infty}\textsc{1}_{[1/M_{(\textbf{e},\textbf{f}\,)},1[}\left(\left\{\frac{mp^k}{p^{\ell}}\right\}\right)=k+v_p(g(mp^k))$ et on a bien $g(v+mp^k)\in p^kg(mp^k)\mathbb{Z}_p$. De plus,
\begin{align*}
v_p(g(mp^k))
&=\sum_{\ell=1}^{\infty}\textsc{1}_{[1/M_{(\textbf{e},\textbf{f}\,)},1[}\left(\left\{\frac{mp^k}{p^{\ell}}\right\}\right)=\sum_{\ell=k+1}^{\infty}\textsc{1}_{[1/M_{(\textbf{e},\textbf{f}\,)},1[}\left(\left\{\frac{mp^k}{p^{\ell}}\right\}\right)\\
&=\sum_{\ell=1}^{\infty}\textsc{1}_{[1/M_{(\textbf{e},\textbf{f}\,)},1[}\left(\left\{\frac{m}{p^{\ell}}\right\}\right)=v_p(g(m)),
\end{align*}
donc $g(mp^k)\in g(m)\mathbb{Z}_p$, comme voulu.

\subsection{Vérification de la condition $(iii)$ du théorème \ref{theo généralisé}}

On rappelle que $\lambda_p$ est l'unique entier naturel vérifiant $p^{\lambda_p}<M_{(\textbf{e},\textbf{f}\,)}\leq p^{\lambda_p+1}$. On va montrer que les suites $(\textbf{A}_r)_{r\geq 0}$ et $(\textbf{g}_r)_{r\geq 0}$ vérifient la condition $(iii)$ du théorème \ref{theo généralisé} avec $k_0=\lambda_p$. Pour cela, nous n'utiliserons que l'inégalité $M_{(\textbf{e},\textbf{f}\,)}\leq p^{\lambda_p+1}$. D'après la partie précédente, la vérification de la condition $(iii)$ montrera que $((\textbf{A}_r)_{r\geq 0},(\textbf{g}_r)_{r\geq 0})$ est un $\lambda_p$-couple de Dwork et ainsi achèvera la preuve du point $(i)$ du théorème~\ref{conj equiv L}.
Nous allons montrer que la condition $(iii)$ est vérifiée en deux étapes, selon que $v_p(m)\geq \lambda_p$ ou que $v_p(m)\leq \lambda_p-1$. La preuve est relativement longue et décomposée en nombreuses étapes.

\subsubsection{Lorsque $v_p(m)\leq \lambda_p-1$}

Nous devons montrer ici que si $\lambda_p\geq 1$ et si $v_p(m)=k\leq \lambda_p-1$, alors on a $\frac{\mathcal{Q}(mp)}{\mathcal{Q}(0)}-\frac{\mathcal{Q}(m)}{\mathcal{Q}(0)}\in p^{k+1}g(m)\mathbb{Z}_p$. Comme $\mathcal{Q}(0)=1$, cela revient à montrer que l'on a
\begin{equation}\label{Le But 2}
\mathcal{Q}(m)\left(\frac{\mathcal{Q}(mp)}{\mathcal{Q}(m)}-1\right)\in p^{k+1}g(m)\mathbb{Z}_p.
\end{equation}
On écrit $m=p^km'$, où $m'\in\mathbb{N}$ et \eqref{Le But 2} devient
\begin{equation}\label{Le But 2 bis}
\mathcal{Q}(m)\left(\frac{\mathcal{Q}(m'p^{k+1})}{\mathcal{Q}(m'p^k)}-1\right)\in p^{k+1}g(m)\mathbb{Z}_p.
\end{equation}
Pour conclure, on utilise le lemme suivant.

\begin{lemme}\label{lemme Q(ap)/Q(a)}
Pour tout $s\in\mathbb{N}$, tout $c\in\{0,\dots,p^s-1\}$ et tout $m\in\mathbb{N}$, on a
$$
\frac{\mathcal{Q}(c)}{\mathcal{Q}(cp)}\frac{\mathcal{Q}(cp+mp^{s+1})}{\mathcal{Q}(c+mp^s)}\in 1+p^{s+1}\mathbb{Z}_p.
$$
\end{lemme}

En appliquant le lemme \ref{lemme Q(ap)/Q(a)} avec $s=k$, $c=0$ et $m'$ à la place de $m$, on obtient $\frac{\mathcal{Q}(m'p^{k+1})}{\mathcal{Q}(m'p^k)}\in 1+p^{k+1}\mathbb{Z}_p$ et donc $\left(\frac{\mathcal{Q}(m'p^{k+1})}{\mathcal{Q}(m'p^k)}-1\right)\in p^{k+1}\mathbb{Z}_p$. De plus, d'après la condition $(ii)$ du théorème~\ref{theo généralisé}, on a $\mathcal{Q}(m)\in g(m)\mathbb{Z}_p$, donc on a bien \eqref{Le But 2 bis}. Il ne nous reste plus qu'à démontrer le lemme~\ref{lemme Q(ap)/Q(a)}. Pour cela, nous allons utiliser certaines propriétés de la fonction gamma $p$-adique définie par $\Gamma_p(n):=(-1)^n\gamma_p(n)$, où $\gamma_p(n):=\underset{(k,p)=1}{\prod_{k=1}^{n-1}}k$. On peut étendre $\Gamma_p$ à tout $\mathbb{Z}_p$ mais on n'en aura pas besoin ici. On résume les propriétés qui nous serviront pour prouver le lemme \ref{lemme Q(ap)/Q(a)}.

\begin{lemme}\label{lemme7}
\begin{itemize}
\item[$(i)$] Pour tout $n\in\mathbb{N}$, on a l'identité $\frac{(np)!}{n!}=p^n\gamma_p(1+np)$.
\item[$(ii)$] Pour tout $k,n$ et $s$ dans $\mathbb{N}$, on a $\Gamma_p(k+np^s)\equiv\Gamma_p(k)\;\mod\;p^s$.
\end{itemize}
\end{lemme}

Le point $(i)$ du lemme \ref{lemme7} s'obtient en remarquant que $\gamma_p(1+np)=\frac{(np)!}{n!p^n}$. Le point $(ii)$ du lemme \ref{lemme7} est le lemme 1.1 de \cite{Lang}.

\begin{proof}[Démonstration du lemme \ref{lemme Q(ap)/Q(a)}]
On a
\begin{align*}
\frac{\mathcal{Q}(cp+mp^{s+1})}{\mathcal{Q}(c+mp^{s})}
&=\prod_{i=1}^{q_1}\frac{(e_i(cp+mp^{s+1}))!}{(e_i(c+mp^{s}))!}\prod_{i=1}^{q_2}\frac{(f_i(c+mp^{s}))!}{(f_i(cp+mp^{s+1}))!}\\
&=\left(\prod_{i=1}^{q_1}p^{e_i(c+mp^{s})}\gamma_p(1+e_icp+e_imp^{s+1})\right)\left(\prod_{i=1}^{q_2}\frac{1}{p^{f_i(c+mp^{s})}\gamma_p(1+f_icp+f_imp^{s+1})}\right)\\
&=(p^{mp^{s}})^{(|\textbf{e}|-|\textbf{f}\,|)}\frac{\prod_{i=1}^{q_1}(p^{e_ic}(-1)^{1+e_icp+e_imp^{s+1}}\Gamma_p(1+e_icp+e_imp^{s+1}))}{\prod_{i=1}^{q_2}\left(p^{f_ic}(-1)^{1+f_icp+f_imp^{s+1}}\Gamma_p(1+f_icp+f_imp^{s+1})\right)}\\
&=(p^{mp^{s}}(-1)^{mp^{s+1}})^{(|\textbf{e}|-|\textbf{f}\,|)}\frac{\prod_{i=1}^{q_1}\left(p^{e_ic}(-1)^{1+e_icp}\;\Gamma_p(1+e_icp+e_imp^{s+1})\right)}{\prod_{i=1}^{q_2}\left(p^{f_ic}(-1)^{1+f_icp}\;\Gamma_p(1+f_icp+f_imp^{s+1})\right)}\\
&=\frac{\prod_{i=1}^{q_1}p^{e_ic}(-1)^{1+e_icp}}{\prod_{i=1}^{q_2}p^{f_ic}(-1)^{1+f_icp}}\cdot\frac{\prod_{i=1}^{q_1}\Gamma_p(1+e_icp+e_imp^{s+1})}{\prod_{i=1}^{q_2}\Gamma_p(1+f_icp+f_imp^{s+1})}.
\end{align*}
D'après le point $(ii)$ du lemme \ref{lemme7}, pour tout $n\in\mathbb{N}$, on a $\Gamma_p(1+ncp+nmp^{s+1})\equiv\Gamma_p(1+ncp)\mod\;p^{s+1}$. On obtient donc 
$$
\frac{\prod_{i=1}^{q_1}\Gamma_p(1+e_icp+e_imp^{s+1})}{\prod_{i=1}^{q_2}\Gamma_p(1+f_icp+f_imp^{s+1})}=\frac{\prod_{i=1}^{q_1}\left(\Gamma_p(1+e_icp)+O(p^{s+1})\right)}{\prod_{i=1}^{q_2}\left(\Gamma_p(1+f_icp)+O(p^{s+1})\right)},
$$
où l'on note $x=O(p^k)$ lorsque $x\in p^k\mathbb{Z}_p$. De plus, par définition de $\Gamma_p$, pour tout $n\in\mathbb{N}$, on a $\Gamma_p(1+ncp)\in\mathbb{Z}_p^{\times}$. On obtient alors
$$
\frac{\prod_{i=1}^{q_1}\left(\Gamma_p(1+e_icp)+O(p^{s+1})\right)}{\prod_{i=1}^{q_2}\left(\Gamma_p(1+f_icp)+O(p^{s+1})\right)}=\frac{\prod_{i=1}^{q_1}\Gamma_p(1+e_icp)}{\prod_{i=1}^{q_2}\Gamma_p(1+f_icp)}(1+O(p^{s+1}))
$$
et ainsi,
\begin{align*}
\frac{\mathcal{Q}(cp+mp^{s+1})}{\mathcal{Q}(c+mp^{s})}
&=\frac{\prod_{i=1}^{q_1}p^{e_ic}(-1)^{1+e_icp}}{\prod_{i=1}^{q_2}p^{f_ic}(-1)^{1+f_icp}}\cdot\frac{\prod_{i=1}^{q_1}\Gamma_p(1+e_icp)}{\prod_{i=1}^{q_2}\Gamma_p(1+f_icp)}(1+O(p^{s+1}))\\
&=\frac{\prod_{i=1}^{q_1}p^{e_ic}}{\prod_{i=1}^{q_2}p^{f_ic}}\cdot\frac{\prod_{i=1}^{q_1}\gamma_p(1+e_icp)}{\prod_{i=1}^{q_2}\gamma_p(1+f_icp)}(1+O(p^{s+1}))\\
&=\frac{\mathcal{Q}(cp)}{\mathcal{Q}(c)}(1+O(p^{s+1})).
\end{align*}
C'est ce qu'il fallait démontrer.
\end{proof}

\subsubsection{Lorsque $v_p(m)\geq \lambda_p$}

Le but de cette partie est de démontrer le fait suivant.

Soit $p$ un nombre premier et $\lambda_p$ l'unique entier naturel tel que $p^{\lambda_p}<M_{(\textbf{e},\textbf{f}\,)}\leq p^{\lambda_p+1}$. Pour tout $s\in\mathbb{N}$, tout $v\in\{0,\dots,p-1\}$, tout $u\in\{0,\dots,p^s-1\}$ et tout $m\in\mathbb{N}$, on a
\begin{equation}\label{Hypothèse (iii)}
\frac{\mathcal{Q}(v+up+mp^{s+\lambda_p+1})}{\mathcal{Q}(v+up)}-\frac{\mathcal{Q}(u+mp^{s+\lambda_p})}{\mathcal{Q}(u)}\in p^{s+\lambda_p+1}\frac{g(mp^{\lambda_p})}{g(v+up)}\mathbb{Z}_p.
\end{equation}

L'équation \eqref{Hypothèse (iii)} est vérifiée si et seulement si, pour tout $v\in\{0,\dots,p-1\}$, tout $u\in\{0,\dots,p^s-1\}$ et tout $m\in\mathbb{N}$, on a
\begin{equation}\label{eq 3.11}
\left(1-\frac{\mathcal{Q}(v+up)}{\mathcal{Q}(u)}\frac{\mathcal{Q}(u+mp^{s+\lambda_p})}{\mathcal{Q}(v+up+mp^{s+\lambda_p+1})}\right)\frac{\mathcal{Q}(v+up+mp^{s+\lambda_p+1})}{\mathcal{Q}(v+up)}\in p^{s+\lambda_p+1}\frac{g(mp^{\lambda_p})}{g(v+up)}\mathbb{Z}_p.
\end{equation}
Dans la suite, on pose $ X_s(v,u,m):=\frac{\mathcal{Q}(v+up)}{\mathcal{Q}(u)}\frac{\mathcal{Q}(u+mp^{s+\lambda_p})}{\mathcal{Q}(v+up+mp^{s+\lambda_p+1})}$. Ainsi, pour démontrer \eqref{Hypothèse (iii)}, il nous suffit de montrer que
\begin{equation}\label{eq 3.13}
(X_s(v,u,m)-1)\frac{\mathcal{Q}(v+up+mp^{s+\lambda_p+1})}{g(mp^{\lambda_p})}\in p^{s+\lambda_p+1}\frac{\mathcal{Q}(v+up)}{g(v+up)}\mathbb{Z}_p.
\end{equation}
Afin d'estimer la valuation de $X_s(v,u,m)-1$, posons, pour tout $v\in\{0,\dots,p-1\}$, tout $u\in\{0,\dots,p^s-1\}$ et tout $s$ et $m$ dans $\mathbb{N}$, 
$$
Y_s(v,u,m):=\frac{\prod_{i=1}^{q_2}\prod_{j=1}^{\lfloor f_iv/p\rfloor}\left(1+\frac{f_imp^{s+\lambda_p}}{f_iu+j}\right)}{\prod_{i=1}^{q_1}\prod_{j=1}^{\lfloor e_iv/p\rfloor}\left(1+\frac{e_imp^{s+\lambda_p}}{e_iu+j}\right)}.
$$
Pour $s$ et $m$ dans $\mathbb{N}$ et $a\in\{0,\dots,p^s-1\}$, on note $\eta_s(a,m):=\sum_{\ell=s+1}^{\infty}\Delta\left(\left\{\frac{a+mp^{s}}{p^{\ell}}\right\}\right)$. On va énoncer un certain nombre de lemmes, que l'on démontre dans la partie \ref{démo lemme Y}.

\begin{lemme}\label{définition de Y}
Pour tout $v\in\{0,\dots,p-1\}$, tout $u\in\{0,\dots,p^s-1\}$ et tout $s$ et $m$ dans $\mathbb{N}$, on a $X_s(v,u,m)\in Y_s(v,u,m)\left(1+p^{s+\lambda_p+1}\mathbb{Z}_p\right)$ et 
$$
v_p(Y_s(v,u,m))=(\eta_{s+\lambda_p+1}(v+up,0)-\eta_{s+\lambda_p}(u,0))-(\eta_{s+\lambda_p+1}(v+up,m)-\eta_{s+\lambda_p}(u,m)).
$$
\end{lemme}

\begin{lemme}\label{lemme theta Q(a)}
Soit $s\in\mathbb{N}$, $v\in\{0,\dots,p-1\}$ et $u\in\{0,\dots,p^{s}-1\}$. Si $\{(v+up)/p^{j}\}<1/M_{(\textbf{e},\textbf{f}\,)}$ pour un $j\in\{1,\dots,s+\lambda_p+1\}$, alors $Y_s(v,u,m)\in 1+p^{s+\lambda_p-j+2}\mathbb{Z}_p$.
\end{lemme}

\begin{lemme}\label{valuation du quotient avec g}
Pour tout $s\in\mathbb{N}$, tout $a\in\{0,\dots,p^{s+1}-1\}$ et tout $m\in\mathbb{N}$, on a
\begin{equation}\label{eq du lemme 20}
\eta_{s+\lambda_p+1}(a,m)\geq \mu(mp^{\lambda_p})
\end{equation}
et
\begin{equation}\label{cor valuation du quotient avec g}
v_p\left(\frac{\mathcal{Q}(a+mp^{s+\lambda_p+1})}{g(mp^{\lambda_p})}\right)\geq \sum_{\ell=1}^{s+\lambda_p+1}\Delta\left(\left\{\frac{a}{p^{\ell}}\right\}\right).
\end{equation}
\end{lemme}

\begin{lemme}\label{lemme Q(a) g(a)}
Pour tout $s\in\mathbb{N}$ et tout $a\in\{0,\dots,p^{s+1}-1\}$, on a: $\eta_{s+\lambda_p+1}(a,0)=0$,
$$
v_p(Q(a))=\sum_{\ell=1}^{s+\lambda_p+1}\Delta\left(\left\{\frac{a}{p^{\ell}}\right\}\right)\quad\textup{et}\quad v_p(g(a))=\sum_{\ell=1}^{s+\lambda_p+1}\textsc{1}_{[1/M_{(\textbf{e},\textbf{f}\,)},1[}\left(\left\{\frac{a}{p^{\ell}}\right\}\right).
$$
\end{lemme}

\begin{Remarque}
Le lemme \ref{lemme Q(a) g(a)} repose essentiellement sur l'inégalité $M_{(\textbf{e},\textbf{f}\,)}\leq p^{\lambda_p+1}$.
\end{Remarque}

Afin de montrer \eqref{eq 3.13}, on va maintenant différencier deux cas.

\medskip
\begin{itemize}
\item \textit{Cas} 1: Supposons qu'il existe $j\in\{1,\dots,s+\lambda_p+1\}$ tel que 
\end{itemize}
\begin{equation}\label{ouhou}
\left\{\frac{v+up}{p^j}\right\}<\frac{1}{M_{(\textbf{e},\textbf{f}\,)}}.
\end{equation}
Soit $j_0$ le plus petit des $j\in\{1,\dots,s+\lambda_p+1\}$ vérifiant \eqref{ouhou}. D'après le lemme \ref{lemme theta Q(a)} appliqué en $j_0$, on obtient $Y_s(v,u,m)\in 1+p^{s+\lambda_p-j_0+2}\mathbb{Z}_p$ et donc, d'après le lemme \ref{définition de Y}, $v_p(X_s(v,u,m)-1)\geq s+\lambda_p-j_0+2$.

Montrons qu'on a $v_p(g(v+up))\geq j_0-1$. Si $j_0=1$, c'est évident. Si $j_0\geq 2$, alors, pour tout $\ell\in\{1,\dots,j_0-1\}$, on a $\{(v+up)/p^{\ell}\}\geq 1/M_{(\textbf{e},\textbf{f}\,)}$ et donc $v_p(g(v+up))=\sum_{\ell=1}^{\infty}\textbf{1}_{[1/M_{(\textbf{e},\textbf{f}\,)},1[}\left(\left\{\frac{v+up}{p^{\ell}}\right\}\right)\geq j_0-1$.
D'après \eqref{cor valuation du quotient avec g}, on obtient 
$$
v_p\left((X_s(v,u,m)-1)\frac{\mathcal{Q}(v+up+mp^{s+\lambda_p+1})}{g(mp^{\lambda_p})}\right)\geq v_p(X_s(v,u,m)-1)+\sum_{\ell=1}^{s+\lambda_p+1}\Delta\left(\left\{\frac{v+up}{p^{\ell}}\right\}\right).
$$
D'où
\begin{align}
v_p&\left((X_s(v,u,m)-1)\frac{\mathcal{Q}(v+up+mp^{s+\lambda_p+1})}{g(mp^{\lambda_p})}\right)\notag\\
&\geq v_p(X_s(v,u,m)-1)+v_p(g(v+up))+\left(\sum_{\ell=1}^{s+\lambda_p+1}\Delta\left(\left\{\frac{v+up}{p^{\ell}}\right\}\right)-v_p(g(v+up))\right)\notag\\
&\geq (s+\lambda_p-j_0+2)+j_0-1+v_p\left(\frac{\mathcal{Q}(v+up)}{g(v+up)}\right)\label{explipli45}\\
&\geq s+\lambda_p+1+v_p\left(\frac{\mathcal{Q}(v+up)}{g(v+up)}\right),\notag
\end{align}
où l'on a utilisé l'identité $v_p(\mathcal{Q}(v+up))=\sum_{\ell=1}^{s+\lambda_p+1}\Delta(\{\frac{v+up}{p^{\ell}}\})$ du lemme \ref{lemme Q(a) g(a)} dans \eqref{explipli45}. On a donc bien \eqref{eq 3.13} dans ce cas.
\medskip
\begin{itemize}
\item \textit{Cas} 2: Supposons que, pour tout $j\in\{1,\dots,s+\lambda_p+1\}$, on ait $\{(v+up)/p^{j}\}\geq 1/M_{(\textbf{e},\textbf{f}\,)}$.
\end{itemize} 
\medskip

Ainsi, on a $v_p(g(v+up))=\sum_{\ell=1}^{\infty}\textbf{1}_{[1/M_{(\textbf{e},\textbf{f}\,)},1[}(\{(v+up)/p^{\ell}\})\geq s+\lambda_p+1$. 

Si $v_p(Y_s(v,u,m))\geq 0$, alors, d'après le lemme \ref{définition de Y}, $v_p(X_s(v,u,m)-1)\geq 0$ et, d'après \eqref{cor valuation du quotient avec g} et le lemme \ref{lemme Q(a) g(a)}, on a
\begin{align*}
v_p\left(\frac{\mathcal{Q}(v+up+mp^{s+\lambda_p+1})}{g(mp^{\lambda_p})}\right)&\geq \sum_{\ell=1}^{s+\lambda_p+1}\Delta\left(\left\{\frac{v+up}{p^{\ell}}\right\}\right)=v_p(g(v+up))+v_p\left(\frac{\mathcal{Q}(v+up)}{g(v+up)}\right)\\
&\geq s+\lambda_p+1+v_p\left(\frac{\mathcal{Q}(v+up)}{g(v+up)}\right).
\end{align*}
On a donc bien \eqref{eq 3.13}.

Supposons maintenant que $v_p(Y_s(v,u,m))<0$. Dans ce cas, d'après le lemme \ref{définition de Y}, on a
\begin{align*}
v_p(X_s(v,u,m)-1)&=v_p(Y_s(v,u,m))\\
&=(\eta_{s+\lambda_p+1}(v+up,0)-\eta_{s+\lambda_p}(u,0))-(\eta_{s+\lambda_p+1}(v+up,m)-\eta_{s+\lambda_p}(u,m)).
\end{align*}
D'après le lemme \ref{lemme Q(a) g(a)}, on a $\eta_{s+\lambda_p+1}(v+up,0)=0$ et $\eta_{s+\lambda_p}(u,0)=0$ et donc 
$$
v_p(X_s(v,u,m)-1)=\eta_{s+\lambda_p}(u,m)-\eta_{s+\lambda_p+1}(v+up,m).
$$
De plus, 
\begin{align*}
v_p(\mathcal{Q}(v+up+mp^{s+\lambda_p+1}))
&=\sum_{\ell=1}^{\infty}\Delta\left(\left\{\frac{v+up+mp^{s+\lambda_p+1}}{p^{\ell}}\right\}\right)\\
&=\sum_{\ell=1}^{s+\lambda_p+1}\Delta\left(\left\{\frac{v+up}{p^{\ell}}\right\}\right)+\sum_{\ell=s+\lambda_p+2}^{\infty}\Delta\left(\left\{\frac{v+up+mp^{s+\lambda_p+1}}{p^{\ell}}\right\}\right)\\
&=\sum_{\ell=1}^{s+\lambda_p+1}\Delta\left(\left\{\frac{v+up}{p^{\ell}}\right\}\right)+\eta_{s+\lambda_p+1}(v+up,m).
\end{align*}
Ainsi, on obtient
\begin{eqnarray*}
\lefteqn{v_p\left((X_s(v,u,m)-1)\frac{\mathcal{Q}(v+up+mp^{s+\lambda_p+1})}{g(mp^{\lambda_p})}\right)}\\
&=&\eta_{s+\lambda_p}(u,m)-\eta_{s+\lambda_p+1}(v+up,m)+\sum_{\ell=1}^{s+\lambda_p+1}\Delta\left(\left\{\frac{v+up}{p^{\ell}}\right\}\right)+\eta_{s+\lambda_p+1}(v+up,m)-\mu(mp^{\lambda_p})\\
&=&\sum_{\ell=1}^{s+\lambda_p+1}\Delta\left(\left\{\frac{v+up}{p^{\ell}}\right\}\right)+\eta_{s+\lambda_p}(u,m)-\mu(mp^{\lambda_p})\\
&=&v_p(g(v+up))+v_p\left(\frac{\mathcal{Q}(v+up)}{g(v+up)}\right)+\eta_{s+\lambda_p}(u,m)-\mu(mp^{\lambda_p})\\
&\geq& s+\lambda_p+1+v_p\left(\frac{\mathcal{Q}(v+up)}{g(v+up)}\right)+\eta_{s+\lambda_p}(u,m)-\mu(mp^{\lambda_p}).
\end{eqnarray*}
Si $s=0$, alors on a $u=0$ et $\eta_{\lambda_p}(0,m)=\sum_{\ell=\lambda_p+1}^{\infty}\Delta(\{\frac{mp^{\lambda_p}}{p^{\ell}}\})\geq\sum_{\ell=\lambda_p+1}^{\infty}\textbf{1}_{[1/M_{(\textbf{e},\textbf{f}\,)},1[}(\{\frac{mp^{\lambda_p}}{p^{\ell}}\})=\mu(mp^{\lambda_p})$ et on a bien \eqref{eq 3.13}. En revanche, si $s\geq 1$ alors, en utilisant le lemme \ref{valuation du quotient avec g} avec $s-1$ à la place de $s$ et $a=u$, on obtient $\eta_{s+\lambda_p}(u,m)\geq\mu(mp^{\lambda_p})$, ce qui donne bien \eqref{eq 3.13}. Ceci achève la preuve de l'équation \eqref{Hypothèse (iii)}, modulo celles des divers lemmes.

\subsubsection{Démonstration des lemmes \ref{définition de Y}, \ref{lemme theta Q(a)}, \ref{valuation du quotient avec g} et \ref{lemme Q(a) g(a)}}\label{démo lemme Y}

\begin{proof}[Démonstration du lemme \ref{définition de Y}]
On veut montrer que
$X_s(v,u,m)\in Y_s(v,u,m)(1+p^{s+\lambda_p+1}\mathbb{Z}_p)$.

On a 
\begin{equation}\label{toutadroite} X_s(v,u,m)=\frac{\mathcal{Q}(v+up)}{\mathcal{Q}(up)}\frac{\mathcal{Q}(up+mp^{s+\lambda_p+1})}{\mathcal{Q}(v+up+mp^{s+\lambda_p+1})}\cdot\frac{\mathcal{Q}(up)}{\mathcal{Q}(u)}\frac{\mathcal{Q}(u+mp^{s+\lambda_p})}{\mathcal{Q}(up+mp^{s+\lambda_p+1})}.
\end{equation}
En appliquant le lemme \ref{lemme Q(ap)/Q(a)} avec $c=u$ et $s+\lambda_p$ pour $s$ au terme tout à droite de \eqref{toutadroite}, on obtient
\begin{equation}\label{pour valuation X}
X_s(v,u,m)\in\frac{\mathcal{Q}(v+up)}{\mathcal{Q}(up)}\frac{\mathcal{Q}(up+mp^{s+\lambda_p+1})}{\mathcal{Q}(v+up+mp^{s+\lambda_p+1})}(1+p^{s+\lambda_p+1}\mathbb{Z}_p).
\end{equation}
De plus, on a
\begin{align*}
\frac{\mathcal{Q}(v+up)}{\mathcal{Q}(up)}\cdot\frac{\mathcal{Q}(up+mp^{s+\lambda_p+1})}{\mathcal{Q}(v+up+mp^{s+\lambda_p+1})}
&=\frac{\Big(\prod_{i=1}^{q_1}\prod_{k=1}^{e_iv}(e_iup+k)\Big)\left(\prod_{i=1}^{q_2}\prod_{k=1}^{f_iv}(f_i(up+mp^{s+\lambda_p+1})+k)\right)}{\left(\prod_{i=1}^{q_2}\prod_{k=1}^{f_iv}(f_iup+k)\right)\Big(\prod_{i=1}^{q_1}\prod_{k=1}^{e_iv}(e_i(up+mp^{s+\lambda_p+1})+k)\Big)}\\
&=\frac{\prod_{i=1}^{q_2}\prod_{k=1}^{f_iv}\left(1+\frac{f_imp^{s+\lambda_p+1}}{f_iup+k}\right)}{\prod_{i=1}^{q_1}\prod_{k=1}^{e_iv}\left(1+\frac{e_imp^{s+\lambda_p+1}}{e_iup+k}\right)}.
\end{align*}

Si $d\in\{e_1,\dots,e_{q_1},f_1,\dots,f_{q_2}\}$ et $k\in\{1,\dots,dv\}$, alors $dup+k$ est divisible par $p$ si et seulement s'il existe $j\in\{1,\dots,\lfloor dv/p\rfloor\}$ tel que $k=jp$. On a donc 
$$
\prod_{k=1}^{dv}\left(1+\frac{dmp^{s+\lambda_p+1}}{dup+k}\right)=\prod_{j=1}^{\lfloor dv/p\rfloor}\left(1+\frac{dmp^{s+\lambda_p}}{du+j}\right)(1+O(p^{s+\lambda_p+1})).
$$
D'où
\begin{align*}
\frac{\mathcal{Q}(v+up)}{\mathcal{Q}(up)}\cdot\frac{\mathcal{Q}(up+mp^{s+\lambda_p+1})}{\mathcal{Q}(v+up+mp^{s+\lambda_p+1})}
&=\frac{\prod_{i=1}^{q_2}\prod_{j=1}^{\lfloor f_iv/p\rfloor}\left(1+\frac{f_imp^{s+\lambda_p}}{f_iu+j}\right)}{\prod_{i=1}^{q_1}\prod_{j=1}^{\lfloor e_iv/p\rfloor}\left(1+\frac{e_imp^{s+\lambda_p}}{e_iu+j}\right)}(1+O(p^{s+\lambda_p+1}))\\
&=Y_s(v,u,m)(1+O(p^{s+\lambda_p+1}))
\end{align*}
et donc $X_s(v,u,m)\in Y_s(v,u,m)(1+p^{s+\lambda_p+1}\mathbb{Z}_p)$, comme voulu.
\medskip

On va maintenant montrer qu'on a bien aussi 
$$
v_p(Y_s(v,u,m))=\eta_{s+\lambda_p+1}(v+up,0)-\eta_{s+\lambda_p}(u,0)-(\eta_{s+\lambda_p+1}(v+up,m)-\eta_{s+\lambda_p}(u,m)).
$$
On a vu ci-dessus que $v_p(Y_s(v,u,m))=v_p(X_s(v,u,m))$. Or, d'après \eqref{pour valuation X}, on a aussi
\begin{align*}
v_p(X_s(v,u,m))
&=v_p\left(\frac{\mathcal{Q}(v+up)}{\mathcal{Q}(up)}\cdot\frac{\mathcal{Q}(up+mp^{s+\lambda_p+1})}{\mathcal{Q}(v+up+mp^{s+\lambda_p+1})}\right)\\
&=v_p(\mathcal{Q}(v+up))-v_p(\mathcal{Q}(up))+v_p(\mathcal{Q}(up+mp^{s+\lambda_p+1}))-v_p(\mathcal{Q}(v+up+mp^{s+\lambda_p+1}))\\
&=\sum_{\ell=1}^{\infty}\Delta\left(\left\{\frac{v+up}{p^{\ell}}\right\}\right)-\sum_{\ell=1}^{\infty}\Delta\left(\left\{\frac{up}{p^{\ell}}\right\}\right)+\sum_{\ell=1}^{\infty}\Delta\left(\left\{\frac{up+mp^{s+\lambda_p+1})}{p^{\ell}}\right\}\right)\\
&-\sum_{\ell=1}^{\infty}\Delta\left(\left\{\frac{v+up+mp^{s+\lambda_p+1}}{p^{\ell}}\right\}\right).
\end{align*}
On a
\begin{multline*}
\sum_{\ell=1}^{\infty}\Delta\left(\left\{\frac{v+up}{p^{\ell}}\right\}\right)-\sum_{\ell=1}^{\infty}\Delta\left(\left\{\frac{v+up+mp^{s+\lambda_p+1}}{p^{\ell}}\right\}\right)\\
=\sum_{\ell=1}^{\infty}\Delta\left(\left\{\frac{v+up}{p^{\ell}}\right\}\right)-\sum_{\ell=1}^{s+\lambda_p+1}\Delta\left(\left\{\frac{v+up}{p^{\ell}}\right\}\right)-\sum_{\ell=s+\lambda_p+2}^{\infty}\Delta\left(\left\{\frac{v+up+mp^{s+\lambda_p+1}}{p^{\ell}}\right\}\right)\\
=\sum_{\ell=s+\lambda_p+2}^{\infty}\Delta\left(\left\{\frac{v+up}{p^{\ell}}\right\}\right)-\sum_{\ell=s+\lambda_p+2}^{\infty}\Delta\left(\left\{\frac{v+up+mp^{s+\lambda_p+1}}{p^{\ell}}\right\}\right)\\
=\eta_{s+\lambda_p+1}(v+up,0)-\eta_{s+\lambda_p+1}(v+up,m),
\end{multline*}
et
\begin{align*}
\sum_{\ell=1}^{\infty}
&\Delta\left(\left\{\frac{up}{p^{\ell}}\right\}\right)-\sum_{\ell=1}^{\infty}\Delta\left(\left\{\frac{up+mp^{s+\lambda_p+1}}{p^{\ell}}\right\}\right)\\
&=\sum_{\ell=s+\lambda_p+2}^{\infty}\Delta\left(\left\{\frac{up}{p^{\ell}}\right\}\right)-\sum_{\ell=s+\lambda_p+2}^{\infty}\Delta\left(\left\{\frac{up+mp^{s+\lambda_p+1}}{p^{\ell}}\right\}\right)\\
&=\sum_{\ell=s+\lambda_p+1}^{\infty}\Delta\left(\left\{\frac{u}{p^{\ell}}\right\}\right)-\sum_{\ell=s+\lambda_p+1}^{\infty}\Delta\left(\left\{\frac{u+mp^{s+\lambda_p}}{p^{\ell}}\right\}\right)=\eta_{s+\lambda_p}(u,0)-\eta_{s+\lambda_p}(u,m).
\end{align*}
Donc, $v_p(Y_s(v,u,m))=\eta_{s+\lambda_p+1}(v+up,0)-\eta_{s+\lambda_p}(u,0)-(\eta_{s+\lambda_p+1}(v+up,m)-\eta_{s+\lambda_p}(u,m))$,
ce qui achève la preuve du lemme.
\end{proof}

\begin{proof}[Démonstration du lemme \ref{lemme theta Q(a)}]
Soit $s\in\mathbb{N}$, $v\in\{0,\dots,p-1\}$ et $u\in\{0,\dots,p^s-1\}$. Soit $\sum_{k=0}^{\infty}u_kp^k$ le développement $p$-adique de $u$ et $L\in\{e_1,\dots,e_{q_1},f_1,\dots,f_{q_2}\}$. On définit $s+\lambda_p+1$ entiers naturels par les relations $b_{L,0}:=\lfloor Lv/p\rfloor$ et $b_{L,k+1}:=\lfloor(Lu_k+b_{L,k})/p\rfloor$ pour $k\in\{0,\dots,s+\lambda_p-1\}$. On note, pour tout $x\in\mathbb{R}$, $\lceil x\rceil$ l'entier immédiatement supérieur à $x$ et on définit $s+\lambda_p+1$ entiers naturels par les relations $a_{L,0}:=1$ et $a_{L,k+1}:=\lceil(Lu_k+a_{L,k})/p\rceil$. Dans un premier temps, nous allons montrer par récurrence sur $r$ que l'assertion $\mathcal{A}_{r}$ : 
$$
\prod_{n=1}^{\lfloor Lv/p\rfloor}\left(1+\frac{Lmp^{s+\lambda_p}}{Lu+n}\right)=\prod_{n=a_{L,r}}^{b_{L,r}}\left(1+\frac{Lmp^{s+\lambda_p-r}}{L\sum_{k=r}^{\infty}u_kp^{k-r}+n}\right)\left(1+O(p^{s+\lambda_p-r+1})\right),
$$
est vraie pour tout $r\in\{0,\dots,s+\lambda_p\}$.

On a $b_{L,0}=\lfloor Lv/p\rfloor$ et $a_{L,0}=1$, donc $\mathcal{A}_{0}$ est vraie.

Soit $r\geq 0$. Supposons que $\mathcal{A}_{r}$ est vraie et montrons $\mathcal{A}_{r+1}$. Si $n\in\{a_{L,r},\dots,b_{L,r}\}$, alors $p$ divise $L\sum_{k=r}^{\infty}u_kp^{k-r}+n$ si et seulement si $p$ divise $Lu_r+n$, \textit{i.e.} si et seulement s'il existe $i\in\{\lceil(Lu_r+a_{L,r})/p\rceil,\dots,\lfloor(Lu_r+b_{L,r})/p\rfloor\}$ tel que $Lu_r+n=ip$. On obtient donc
\begin{align}
\prod_{n=a_{L,r}}^{b_{L,r}}\left(1+\frac{Lmp^{s+\lambda_p-r}}{L\sum_{k=r}^{\infty}u_kp^{k-r}+n}\right)
&=\prod_{i=a_{L,r+1}}^{b_{L,r+1}}\left(1+\frac{Lmp^{s+\lambda_p-r}}{L\sum_{k=r+1}^{\infty}u_kp^{k-r}+ip}\right)(1+O(p^{s+\lambda_p-r}))\notag\\
&=\prod_{i=a_{L,r+1}}^{b_{L,r+1}}\left(1+\frac{Lmp^{s+\lambda_p-r-1}}{L\sum_{k=r+1}^{\infty}u_kp^{k-r-1}+i}\right)(1+O(p^{s+\lambda_p-r}))\label{Alr}.
\end{align}
D'après $\mathcal{A}_{r}$ et \eqref{Alr}, on a bien $\mathcal{A}_{r+1}$, ce qui achève la récurrence sur $r$.

Soit $L\in\{e_1,\dots,e_{q_1},f_1,\dots,f_{q_2}\}$. Nous allons montrer par récurrence sur $k$ que l'assertion $\mathcal{B}_{k}$ : $a_{L,k}\geq 1$ et $b_{L,k}\leq \lfloor L\{(v+up)/p^{k+1}\}\rfloor$ est vraie pour tout $k\in\{0,\dots,s+\lambda_p\}$. 

On a $a_{L,0}=1$ et $b_{L,0}=\lfloor Lv/p\rfloor=\lfloor L\{(v+up)/p\}\rfloor$, donc $\mathcal{B}_0$ est vraie.

Soit $k\geq 0$. Supposons que $\mathcal{B}_k$ est vraie et montrons $\mathcal{B}_{k+1}$. On a $a_{L,k+1}=\lceil(Lu_k+a_{L,k})/p\rceil$ et $b_{L,k+1}=\left\lfloor(Lu_k+b_{L,k})/p\right\rfloor$, donc $a_{L,k+1}\geq\lceil(Lu_k+1)/p\rceil\geq 1$ et
$$
b_{L,k+1}\leq\left\lfloor\frac{Lu_k}{p}+\frac{L}{p}\left\{\frac{v+up}{p^{k+1}}\right\}\right\rfloor=\left\lfloor L\left(\frac{u_kp^{k+1}}{p^{k+2}}+\frac{v+p\sum_{i=0}^{k-1}u_ip^i}{p^{k+2}}\right)\right\rfloor=\left\lfloor L\left\{\frac{v+up}{p^{k+2}}\right\}\right\rfloor,
$$
ce qui achève la récurrence sur $k$.

Soit $j\in\{1,\dots,s+\lambda_p+1\}$ tel que $\{(v+up)/p^j\}<1/M_{(\textbf{e},\textbf{f}\,)}$. Pour tout $L\in\{e_1,\dots,e_{q_1},f_1,\dots,f_{q_2}\}$, on obtient, \textit{via} $\mathcal{B}_{j-1}$, que $a_{L,j-1}\geq 1$ et $b_{L,j-1}\leq\lfloor L\{(v+up)/p^{j}\}\rfloor\leq\lfloor M_{(\textbf{e},\textbf{f}\,)}\{(v+up)/p^{j}\}\rfloor=0$. Ainsi, d'après $\mathcal{A}_{j-1}$, on a 
$$
\prod_{n=1}^{\lfloor Lv/p\rfloor}\left(1+\frac{Lmp^{s+\lambda_p}}{Lu+n}\right)=1+O(p^{s+\lambda_p-j+2})
$$
et donc
$$
Y_s(v,u,m)=\frac{\prod_{i=1}^{q_2}\prod_{n=1}^{\lfloor f_iv/p\rfloor}\left(1+\frac{f_imp^{s+\lambda_p}}{f_iu+n}\right)}{\prod_{i=1}^{q_1}\prod_{n=1}^{\lfloor e_iv/p\rfloor}\left(1+\frac{e_imp^{s+\lambda_p}}{e_iu+n}\right)}=\frac{1+O(p^{s+\lambda_p-j+2})}{1+O(p^{s+\lambda_p-j+2})}=1+O(p^{s+\lambda_p-j+2}), 
$$
ce qui achève la preuve du lemme \ref{lemme theta Q(a)}.
\end{proof}

\begin{proof}[Démonstration du lemme \ref{valuation du quotient avec g}]
Dans un premier temps, nous allons montrer qu'on a bien \eqref{eq du lemme 20}. \'Ecrivons $m=\sum_{k=0}^qm_kp^k$, où $m_k\in\{0,\dots,p-1\}$. On a
\begin{align*}
\eta_{s+\lambda_p+1}&(a,m)-\mu(mp^{\lambda_p})\\
&=\sum_{\ell=s+\lambda_p+2}^{\infty}\Delta\left(\left\{\frac{a+mp^{s+\lambda_p+1}}{p^{\ell}}\right\}\right)-\sum_{\ell=1}^{\infty}\textsc{1}_{[1/M_{(\textbf{e},\textbf{f}\,)},1[}\left(\left\{\frac{mp^{\lambda_p}}{p^{\ell}}\right\}\right)\\
&=\sum_{\ell=s+\lambda_p+2}^{\infty}\left(\Delta\left(\left\{\frac{a+mp^{s+\lambda_p+1}}{p^{\ell}}\right\}\right)-\textsc{1}_{[1/M_{(\textbf{e},\textbf{f}\,)},1[}\left(\left\{\frac{mp^{s+\lambda_p+1}}{p^{\ell}}\right\}\right)\right)\\
&=\sum_{\ell=s+\lambda_p+2}^{\infty}\left(\Delta\left(\frac{a+\sum_{k=0}^{\ell-s-\lambda_p-2}m_kp^{k+s+\lambda_p+1}}{p^{\ell}}\right)-\textsc{1}_{[1/M_{(\textbf{e},\textbf{f}\,)},1[}\left(\frac{\sum_{k=0}^{\ell-s-\lambda_p-2}m_kp^{k+s+\lambda_p+1}}{p^{\ell}}\right)\right).
\end{align*}
De plus, pour tout $\ell\geq s+\lambda_p+2$, on a
$$
0\leq \frac{\sum_{k=0}^{\ell-s-\lambda_p-2}m_kp^{k+s+\lambda_p+1}}{p^{\ell}}\leq \frac{a+\sum_{k=0}^{\ell-s-\lambda_p-2}m_kp^{k+s+\lambda_p+1}}{p^{\ell}}\leq\frac{p^{\ell}-1}{p^{\ell}}<1.
$$
Donc
\begin{align*}
\textsc{1}_{[1/M_{(\textbf{e},\textbf{f}\,)},1[}\left(\frac{\sum_{k=0}^{\ell-s-\lambda_p-2}m_kp^{k+s+\lambda_p+1}}{p^{\ell}}\right)=1\quad
&\Longrightarrow\quad 1>\frac{\sum_{k=0}^{\ell-s-\lambda_p-2}m_kp^{k+s+\lambda_p+1}}{p^{\ell}}\geq \frac{1}{M_{(\textbf{e},\textbf{f}\,)}}\\
&\Longrightarrow\quad 1>\frac{a+\sum_{k=0}^{\ell-s-\lambda_p-2}m_kp^{k+s+\lambda_p+1}}{p^{\ell}}\geq \frac{1}{M_{(\textbf{e},\textbf{f}\,)}}\\
&\Longrightarrow\quad \Delta\left(\frac{a+\sum_{k=0}^{\ell-s-\lambda_p-2}m_kp^{k+s+\lambda_p+1}}{p^{\ell}}\right)\geq 1
\end{align*}
et donc $\eta_{s+\lambda_p+1}(a,m)-\mu(mp^{\lambda_p})\geq 0$. Ceci termine la preuve de \eqref{eq du lemme 20}.

Montrons maintenant \eqref{cor valuation du quotient avec g}. On a
\begin{align}
v_p\left(\frac{\mathcal{Q}(a+mp^{s+\lambda_p+1})}{g(mp^{\lambda_p})}\right)
&=\sum_{\ell=1}^{\infty}\Delta\left(\left\{\frac{a+mp^{s+\lambda_p+1}}{p^{\ell}}\right\}\right)-\mu(mp^{\lambda_p})\notag\\
&=\sum_{\ell=1}^{s+\lambda_p+1}\Delta\left(\left\{\frac{a}{p^{\ell}}\right\}\right)+\sum_{\ell=s+\lambda_p+2}^{\infty}\Delta\left(\left\{\frac{a+mp^{s+\lambda_p+1}}{p^{\ell}}\right\}\right)-\mu(mp^{\lambda_p})\notag\\
&=\sum_{\ell=1}^{s+\lambda_p+1}\Delta\left(\left\{\frac{a}{p^{\ell}}\right\}\right)+\eta_{s+\lambda_p+1}(a,m)-\mu(mp^{\lambda_p}),\notag\\
&\geq \sum_{\ell=1}^{s+\lambda_p+1}\Delta\left(\left\{\frac{a}{p^{\ell}}\right\}\right)\label{expli expli 2}.
\end{align}
où dans \eqref{expli expli 2}, on a utilisé l'inégalité \eqref{eq du lemme 20}.
\end{proof}

\begin{proof}[Démonstration du lemme \ref{lemme Q(a) g(a)}]
On a $\eta_{s+\lambda_p+1}(a,0)=\sum_{\ell=s+\lambda_p+2}^{\infty}\Delta\left(\left\{\frac{a}{p^{\ell}}\right\}\right)$, $ v_p(Q(a))=\sum_{\ell=1}^{\infty}\Delta\left(\left\{\frac{a}{p^{\ell}}\right\}\right)$ et $ v_p(g(a))=\sum_{\ell=1}^{\infty}\textsc{1}_{[1/M_{(\textbf{e},\textbf{f}\,)},1[}\left(\left\{\frac{a}{p^{\ell}}\right\}\right)$. Or, si $\ell\geq s+\lambda_p+2$, on a $\left\{\frac{a}{p^{\ell}}\right\}\leq\frac{p^{s+1}-1}{p^{\ell}}<\frac{1}{p^{\lambda_p+1}}\leq\frac{1}{M_{(\textbf{e},\textbf{f}\,)}}$, car $M_{(\textbf{e},\textbf{f}\,)}\leq p^{\lambda_p+1}$. Donc $\Delta\left(\left\{\frac{a}{p^{\ell}}\right\}\right)=\textsc{1}_{[1/M_{(\textbf{e},\textbf{f}\,)},1[}\left(\left\{\frac{a}{p^{\ell}}\right\}\right)=0$. D'où le résultat.
\end{proof}

\section{Démonstration des points $(ii)$ des théorèmes \ref{conj equiv} et \ref{conj equiv L}}\label{section(ii)}

On se place sous les hypothèses des thèorèmes \ref{conj equiv} et \ref{conj equiv L}. On suppose de plus que $\Delta$ s'annule sur $[1/M_{(\textbf{e},\textbf{f}\,)},1[$. Le but de cette partie est de montrer qu'il n'existe qu'un nombre fini de premiers $p$ tels que $q(z)\in z\mathbb{Z}_p[[z]]$ et que, pour tout $L\in\{1,\dots,M_{(\textbf{e},\textbf{f}\,)}\}$, il n'existe qu'un nombre fini de premiers $p$ tels que $q_L(z)\in\mathbb{Z}_p[[z]]$. On fixe $L\in\{1,\dots,M_{(\textbf{e},\textbf{f}\,)}\}$ dans cette partie.
 
D'après la partie \ref{section equiv Zp}, il suffit de montrer que, pour tout nombre premier $p$ assez grand, il existe $a\in\{0,\dots,p-1\}$ et $K\in\mathbb{N}$ tels que $\Phi_p(a+Kp)\notin p\mathbb{Z}_p$ et $\Phi_{L,p}(a+Kp)\notin p\mathbb{Z}_p$. En fait, nous allons montrer que pour tout nombre premier $p$ assez grand, il existe $a\in\{0,\dots,p-1\}$ tel que $\Phi_p(a)\notin p\mathbb{Z}_p$ et tel que $\Phi_{L,p}(a)\notin p\mathbb{Z}_p$ ou $\Phi_{L,p}(a+p)\notin p\mathbb{Z}_p$.

\subsection{Démonstration du point $(ii)$ du théorème \ref{conj equiv}}\label{(ii) 1}

Nous allons montrer que, pour tout nombre premier $p$ assez grand, il existe $a\in\{0,\dots,p-1\}$ tel que $\Phi_p(a)\notin p\mathbb{Z}_p$. Dans ce cas, on a $\Phi_p(a)=-p\mathcal{Q}(a)\Big(\sum_{i=1}^{q_1}e_iH_{e_ia}-\sum_{i=1}^{q_2}f_iH_{f_ia}\Big)$.
\medskip

Pour tout $d\in\mathbb{N}$, on a $H_{da}=\sum_{i=1}^{da}\frac{1}{i}\equiv\sum_{j=1}^{\lfloor da/p\rfloor}\frac{1}{jp}\mod\mathbb{Z}_p$. Pour tout $x\in[0,1]$, on pose $\Psi(x):=\sum_{i=1}^{q_1}\sum_{j=1}^{\lfloor e_ix\rfloor}\frac{e_i}{j}-\sum_{i=1}^{q_2}\sum_{j=1}^{\lfloor f_ix\rfloor}\frac{f_i}{j}$. Ainsi, pour tout $a\in\{0,\dots,p-1\}$, on a $\Phi_p(a)\equiv -\mathcal{Q}(a)\Psi(a/p)\mod p\mathbb{Z}_p$. Il suffit donc de montrer que, pour tout premier $p$ assez grand, il existe $a\in\{0,\dots,p-1\}$ tel que $v_p\left(\mathcal{Q}(a)\right)=v_p\left(\Psi(a/p)\right)=0$. 
\medskip

Pour tout $d\in\mathbb{N}$, $d\geq 1$, les sauts de l'application $x\mapsto\lfloor dx\rfloor$ sur $[0,1]$ se font aux abscisses $\frac{1}{d},\frac{2}{d},\dots,\frac{d-1}{d},1$. Soit $\mathcal{D}:=\{e_1,\dots,e_{q_1},f_1,\dots,f_{q_2}\}$ et $\gamma_1<\dots<\gamma_t=1$ les rationnels qui vérifient $\{\gamma_1,\dots,\gamma_t\}=\bigcup_{d\in\mathcal{D}}\{\frac{1}{d},\frac{2}{d},\dots,\frac{d-1}{d},1\}$. Pour tout $i\in\{1,\dots,t\}$, on note $m_i\in\mathbb{Z}$ l'amplitude du saut de $\Delta$ en $\gamma_i$. Pour tout $x\in[1/M_{(\textbf{e},\textbf{f}\,)},1[$, il existe un $i\in\{1,\dots,t-1\}$ tel que $x\in[\gamma_i,\gamma_{i+1}[$ et on a alors $\Psi(x)=\sum_{n=1}^{q_1}\sum_{j=1}^{\lfloor e_nx\rfloor}\frac{1}{j/e_n}-\sum_{n=1}^{q_2}\sum_{j=1}^{\lfloor f_nx\rfloor}\frac{1}{j/f_n}=\sum_{k=1}^i\frac{m_k}{\gamma_k}$.
\medskip

La fonction $\Delta$ prend la valeur $0$ sur $[1/M_{(\textbf{e},\textbf{f}\,)},1[$ donc il existe un $i_0\in\{1,\dots,t-1\}$ tel que $\Delta$ soit nulle sur $[\gamma_{i_0},\gamma_{i_0+1}[$. Il existe une constante $\mathcal{P}_1$ telle que, pour tout premier $p\geq\mathcal{P}_1$, il existe un $a_p\in\{0,\dots,p-1\}$ tel que $a_p/p\in[\gamma_{i_0},\gamma_{i_0+1}[$. Ainsi, pour tout premier $p\geq\mathcal{P}_1$, on a $\Delta(a_p/p)=0$ et $\Psi(a_p/p)=\sum_{k=1}^{i_0}\frac{m_k}{\gamma_k}$. Il existe une constante $\mathcal{P}_2\geq\mathcal{P}_1$ telle que, pour tout premier $p\geq\mathcal{P}_2$, on ait $\sum_{k=1}^{i_0}\frac{m_k}{\gamma_k}\in\mathbb{Z}_p^{\times}\cup\{0\}$. Le lemme suivant nous permet de conclure que l'on a pas $\sum_{k=1}^{i_0}\frac{m_k}{\gamma_k}=0$.

\begin{lemme}\label{newlemma}
Soit $\textbf{e}:=(e_1,\dots,e_{q_1})$ et $\textbf{f}:=(f_1,\dots,f_{q_2})$ deux suites d'entiers strictement positifs disjointes. On se place dans les conditions ci-dessus. S'il existe $i_0\in\{1,\dots,t\}$ tel que $\Delta_{(\textbf{e},\textbf{f}\,)}$ soit positive sur $[\gamma_1,\gamma_{i_0}[$ et nulle sur $[\gamma_{i_0},\gamma_{i_0+1}[$, alors on a
$$
\sum_{k=1}^{i_0}\frac{m_k}{\gamma_k}>0\quad\textup{et}\quad\prod_{k=1}^{i_0}\left(1+\frac{1}{\gamma_k}\right)^{m_k}>1.
$$
\end{lemme}

\begin{Remarque}
On utilisera l'inégalité $\prod_{k=1}^{i_0}\left(1+\frac{1}{\gamma_k}\right)^{m_k}>1$ dans la démonstration du point $(ii)$ du théorème \ref{conj equiv L}.
\end{Remarque}

D'après ce lemme, pour tout premier $p\geq\max(\mathcal{P}_2,M_{(\textbf{e},\textbf{f}\,)})$, il existe un $a_p\in\{0,\dots,p-1\}$ tel que $\Delta(a_p/p)=0$ et $\Psi(a_p/p)\in\mathbb{Z}_p^{\times}$, car $\Psi(a_p/p)=\sum_{k=1}^{i_0}\frac{m_k}{\gamma_k}\neq 0$. Pour tout $\ell\in\mathbb{N}$, $\ell\geq 2$, on a $a_p/p^{\ell}<1/p\leq1/M_{(\textbf{e},\textbf{f}\,)}$ et donc $\Delta(\{a_p/p^{\ell}\})=0$. Ainsi, on obtient bien $v_p(\mathcal{Q}(a_p))=\sum_{\ell=1}^{\infty}\Delta(\{a_p/p^{\ell}\})=0$ et $\Psi(a_p/p)\in\mathbb{Z}_p^{\times}$, ce qui termine la démonstration de l'assertion $(ii)$ du théorème \ref{conj equiv}.

\begin{proof}[Démonstration du lemme \ref{newlemma}]
Les suites $\textbf{e}$ et $\textbf{f}$ sont disjointes donc $\gamma_1=1/M_{(\textbf{e},\textbf{f}\,)}$ est un saut effectif de $\Delta$. Comme, pour tout $x\in[0,1]$, on a $\Delta(x)\geq 0$, on obtient $\Delta(\gamma_1)\geq 1$, \textit{i.e.} $m_1\geq 1$. Comme $\Delta$ est nulle sur $[\gamma_{i_0},\gamma_{i_0+1}[$, on a $\sum_{k=1}^{i_0}m_k=0$. Or $m_1\geq 1$ donc $i_0\geq 2$. On va maintenant montrer par récurrence sur $\ell$ que, pour tout $\ell\in\{2,\dots,i_0\}$, on a
\begin{equation}\label{igeq2}
\sum_{k=1}^{\ell}\frac{m_k}{\gamma_k}>\frac{1}{\gamma_{\ell}}\sum_{k=1}^{\ell}m_k\quad\textup{et}\quad\prod_{k=1}^{\ell}\left(1+\frac{1}{\gamma_k}\right)^{m_k}>\left(1+\frac{1}{\gamma_{\ell}}\right)^{\sum_{k=1}^{\ell}m_k}.
\end{equation}
On a $\frac{1}{\gamma_1}>\dots>\frac{1}{\gamma_t}$ et $m_1\geq 1$ donc 
$$
\frac{m_1}{\gamma_1}+\frac{m_2}{\gamma_2}>\frac{m_1}{\gamma_2}+\frac{m_2}{\gamma_2}\quad\textup{et}\quad\left(1+\frac{1}{\gamma_1}\right)^{m_1}\left(1+\frac{1}{\gamma_2}\right)^{m_2}>\left(1+\frac{1}{\gamma_2}\right)^{m_1+m_2}.
$$ 
Ainsi, \eqref{igeq2} est vraie pour $\ell=2$. Si $i_0\geq 3$, soit $\ell\in\{2,\dots,i_0-1\}$ tel que \eqref{igeq2} soit vraie. Par hypothèse de récurrence, on a 
$$\sum_{k=1}^{\ell+1}\frac{m_k}{\gamma_k}>\frac{1}{\gamma_{\ell}}\sum_{k=1}^{\ell}m_k+\frac{m_{\ell+1}}{\gamma_{\ell+1}}\quad\textup{et}\quad\prod_{k=1}^{\ell+1}\left(1+\frac{1}{\gamma_k}\right)^{m_k}>\left(1+\frac{1}{\gamma_{\ell}}\right)^{\sum_{k=1}^{\ell}m_k}\left(1+\frac{1}{\gamma_{\ell+1}}\right)^{m_{\ell+1}}.
$$ 
Comme $\Delta$ est positive sur $[\gamma_1,\gamma_{i_0}[$, on obtient $\sum_{k=1}^{\ell}m_k\geq 0$ et donc $\frac{1}{\gamma_{\ell}}\sum_{k=1}^{\ell}m_k\geq \frac{1}{\gamma_{\ell+1}}\sum_{k=1}^{\ell}m_k$ et $\left(1+\frac{1}{\gamma_{\ell}}\right)^{\sum_{k=1}^{\ell}m_k}\geq\left(1+\frac{1}{\gamma_{\ell+1}}\right)^{\sum_{k=1}^{\ell}m_k}$. Ainsi, on a bien 
$$\sum_{k=1}^{\ell+1}\frac{m_k}{\gamma_k}>\frac{1}{\gamma_{\ell+1}}\sum_{k=1}^{\ell+1}m_k\quad\textup{et}\quad\prod_{k=1}^{\ell+1}\left(1+\frac{1}{\gamma_k}\right)^{m_k}>\left(1+\frac{1}{\gamma_{\ell+1}}\right)^{\sum_{k=1}^{\ell+1}m_k},
$$ 
ce qui achève la récurrence. 

En utilisant \eqref{igeq2} avec $\ell=i_0$, on obtient $\sum_{k=1}^{i_0}\frac{m_k}{\gamma_k}>\frac{1}{\gamma_{i_0}}\sum_{k=1}^{i_0}m_k= 0$ et $\prod_{k=1}^{i_0}\left(1+\frac{1}{\gamma_k}\right)^{m_k}>\left(1+\frac{1}{\gamma_{i_0}}\right)^{\sum_{k=1}^{i_0}m_k}=1$, ce qui achève la preuve du lemme.
\end{proof}

\subsection{Démonstration du point $(ii)$ du théorème \ref{conj equiv L}}

On fixe $L\in\{1,\dots,M_{(\textbf{e},\textbf{f}\,)}\}$ dans cette partie. Nous allons montrer que, pour tout nombre premier $p$ assez grand, il existe $a\in\{0,\dots,p-1\}$ tel que $\Phi_{L,p}(a)\notin p\mathbb{Z}_p$ ou $\Phi_{L,p}(a+p)\notin p\mathbb{Z}_p$. On rappelle qu'on a $\Phi_{L,p}(a)=-p\mathcal{Q}(a)H_{La}$.
\medskip

Comme on l'a dit dans la partie \ref{(ii) 1}, on a $H_{La}\equiv\sum_{j=1}^{\lfloor La/p\rfloor}\frac{1}{jp}\mod p\mathbb{Z}_p$, ce qui donne $\Phi_{L,p}(a)\equiv-\mathcal{Q}(a)H_{\lfloor La/p\rfloor}\mod p\mathbb{Z}_p$. Pour tout premier $p$ et tout $a\in\{0,\dots,p-1\}$, on a $H_{\lfloor La/p\rfloor}\in\{0,H_1,\dots,H_L\}$. Il existe une constante $\mathcal{P}_1$ telle que, pour tout premier $p\geq\mathcal{P}_1$, on ait $\{H_1,\dots,H_L\}\subset\mathbb{Z}_p^{\times}$. D'après la partie \ref{(ii) 1}, il existe une constante $\mathcal{P}_2>M_{(\textbf{e},\textbf{f}\,)}$ telle que, pour tout premier $p\geq\mathcal{P}_2$, il existe $a\in\{0,\dots,p-1\}$ tel que $v_p(\mathcal{Q}(a))=0$. Pour tout premier $p\geq\mathcal{P}_0:=\max(\mathcal{P}_1,\mathcal{P}_2)$, alors il existe $a_p\in\{0,\dots,p-1\}$ tel que $v_p(\mathcal{Q}(a_p))=0$ et $H_{\lfloor La_p/p\rfloor}\in\mathbb{Z}_p^{\times}\cup\{0\}$. Ainsi, si $\lfloor La_p/p\rfloor\geq 1$, alors on a $\Phi_{L,p}(a_p)\notin p\mathbb{Z}_p$. On remarque cependant que si $L=1$, alors on a toujours $\lfloor 1\cdot a_p/p\rfloor=0$ et donc $\Phi_{1,p}(a_p)\in p\mathbb{Z}_p$. On fixe un premier $p\geq\mathcal{P}_0$ dans la suite.
\medskip 

Nous allons maintenant montrer que si $\lfloor La_p/p\rfloor=0$, alors on a $\Phi_{L,p}(a_p+p)\notin p\mathbb{Z}_p$, ce qui achèvera la preuve du point $(ii)$ du théorème \ref{conj equiv L}. Si $\lfloor La_p/p\rfloor=0$, alors on a $-p\mathcal{Q}(a_p)H_{La_p}\in p\mathbb{Z}_p$. De plus, on a $H_{L(a_p+p)}=\sum_{j=1}^{L(a_p+p)}\frac{1}{j}\equiv\frac{1}{p}\sum_{i=1}^{\lfloor L(a_p+p)/p\rfloor}\frac{1}{i}\mod\mathbb{Z}_p$ et $\lfloor L(a_p+p)/p\rfloor=L$ donc $pH_{L(a_p+p)}\equiv H_L\mod p\mathbb{Z}_p$. On obtient 
\begin{align*}
\Phi_{L,p}(a_p+p)
&=\sum_{j=0}^1\mathcal{Q}(1-j)\mathcal{Q}(a_p+jp)(H_{L(1-j)}-pH_{L(a_p+jp)})\\
&=\mathcal{Q}(1)\mathcal{Q}(a_p)(H_L-pH_{La_p})-\mathcal{Q}(a_p+p)pH_{L(a_p+p)}\\
&\equiv\mathcal{Q}(1)\mathcal{Q}(a_p)H_L-\mathcal{Q}(a_p+p)H_L\mod p\mathbb{Z}_p\\
&\equiv H_L\mathcal{Q}(a_p)\mathcal{Q}(1)\left(1-\frac{\mathcal{Q}(a_p+p)}{\mathcal{Q}(a_p)\mathcal{Q}(1)}\right)\mod p\mathbb{Z}_p,
\end{align*}
avec $H_L$ et $\mathcal{Q}(a_p)$ dans $\mathbb{Z}_p^{\times}$. De plus, on a $p>M_{(\textbf{e},\textbf{f}\,)}$ donc $v_p(\mathcal{Q}(1))=\sum_{\ell=1}^{\infty}\Delta(\{1/p^{\ell}\})=0$ car $\Delta$ est nulle sur $[0,1/M_{(\textbf{e},\textbf{f}\,)}[$. Il nous suffit donc de montrer que $1-\frac{\mathcal{Q}(a_p+p)}{\mathcal{Q}(a_p)\mathcal{Q}(1)}\in\mathbb{Z}_p^{\times}$. On a
$$
\frac{\mathcal{Q}(a_p+p)}{\mathcal{Q}(a_p)}=\frac{\prod_{i=1}^{q_1}\prod_{j=1}^{e_ip}(e_ia_p+j)}{\prod_{i=1}^{q_2}\prod_{j=1}^{f_ip}(f_ia_p+j)}.
$$
Pour tout $d\in\{1,\dots,M_{(\textbf{e},\textbf{f}\,)}\}$, on a $\prod_{j=1}^{dp}(da_p+j)=\prod_{k=0}^{d-1}\prod_{j=1}^{p}(da_p+j+kp)$. Comme $p>M_{(\textbf{e},\textbf{f}\,)}$, on a $\lfloor da_p/p^2\rfloor=0$ donc il existe $n_d\in\{0,\dots,p-1\}$ tel que $da_p=n_d+\lfloor da_p/p\rfloor p$. Pour tout $k\in\{0,\dots,d-1\}$, on a alors
$$
\underset{j\neq p-n_d}{\prod_{j=1}^p}(da_p+j+kp)=\prod_{m=1}^{p-1}(m+O(p))=-1+O(p)\quad\textup{et}\quad da_p+p-n_d+kp=p(\lfloor da_p/p\rfloor+1+k),
$$
ce qui donne $\prod_{j=1}^{dp}(da_p+j)=(-1+O(p))^dp^d\prod_{k=0}^{d-1}(\lfloor da_p/p\rfloor+1+k)=((-1)^d+O(p))p^d\prod_{k=0}^{d-1}(\lfloor da_p/p\rfloor+1+k)$. Ainsi, on obtient
\begin{align*}
\frac{\mathcal{Q}(a_p+p)}{\mathcal{Q}(a_p)}
&=\frac{\prod_{i=1}^{q_1}\Big(\big((-1)^{e_i}+O(p)\big)p^{e_i}\prod_{k=1}^{e_i}(\lfloor e_ia_p/p\rfloor+k)\Big)}{\prod_{i=1}^{q_2}\Big(\big((-1)^{f_i}+O(p)\big)p^{f_i}\prod_{k=1}^{f_i}(\lfloor f_ia_p/p\rfloor+k)\Big)}\\
&=\big((-1)^{|\textbf{e}|}+O(p)\big)\big((-1)^{|\textbf{f}\,|}+O(p)\big)p^{|\textbf{e}|-|\textbf{f}\,|}\frac{\prod_{i=1}^{q_1}(\lfloor e_ia_p/p\rfloor+e_i)!}{\prod_{i=1}^{q_2}(\lfloor f_ia_p/p\rfloor+f_i)!}\cdot\frac{\prod_{i=1}^{q_2}\lfloor f_ia_p/p\rfloor!}{\prod_{i=1}^{q_1}\lfloor e_ia_p/p\rfloor!}\\
&=\frac{\prod_{i=1}^{q_2}\lfloor f_ia_p/p\rfloor!}{\prod_{i=1}^{q_1}\lfloor e_ia_p/p\rfloor!}\cdot\frac{\prod_{i=1}^{q_1}e_i!}{\prod_{i=1}^{q_2}f_i!}\cdot\frac{\prod_{i=1}^{q_1}\prod_{k=1}^{\lfloor e_ia_p/p\rfloor}(e_i+k)}{\prod_{i=1}^{q_2}\prod_{k=1}^{\lfloor f_ia_p/p\rfloor}(f_i+k)}(1+O(p)),
\end{align*}
ce qui donne
$$
\frac{\mathcal{Q}(a_p+p)}{\mathcal{Q}(a_p)\mathcal{Q}(1)}=\frac{\prod_{i=1}^{q_1}\prod_{k=1}^{\lfloor e_ia_p/p\rfloor}(\frac{e_i}{k}+1)}{\prod_{i=1}^{q_2}\prod_{k=1}^{\lfloor f_ia_p/p\rfloor}(\frac{f_i}{k}+1)}(1+O(p))=\prod_{k=1}^{i_0}\left(1+\frac{1}{\gamma_k}\right)^{m_k}(1+O(p)),
$$
où la dernière égalité a lieu car $a_p/p\in[\gamma_{i_0},\gamma_{i_0+1}[$.
Le lemme \ref{newlemma} nous dit que $\prod_{k=1}^{i_0}\left(1+\frac{1}{\gamma_k}\right)^{m_k}> 1$, on ne peut donc avoir $\prod_{k=1}^{i_0}\left(1+\frac{1}{\gamma_k}\right)^{m_k}=1+O(p)$ que pour un nombre fini de premier $p$, car rappelons que si $x\in\mathbb{Q}$ est dans $p\mathbb{Z}_p$ pour une infinité de nombres premiers $p$, alors $x=0$. Ainsi, pour tout premier $p$ assez grand, on a bien $\lfloor La_p/p\rfloor=0\Rightarrow\Phi_{L,p}(a_p+p)\notin p\mathbb{Z}_p$, ce qui termine la preuve du point $(ii)$ du théorème \ref{conj equiv L}.

\section{Résultats hypergéométriques}\label{préliminaires}

Le but de cette partie est de caractériser les séries hypergéométriques généralisées dont les coefficients peuvent se mettre sous forme factorielle. Cela nous permettra de décrire les sauts de l'application de Landau sur $[0,1]$ et d'en conclure que $\Delta_{(\textbf{e},\textbf{f}\,)}$ est croissante sur $[0,1[$ si et seulement si l'équation différentielle fuchsienne associée à $F_{(\textbf{e},\textbf{f}\,)}$ a tous ses exposants égaux à $0$ à l'origine.

\subsection{Séries hypergéométriques définies par des quotients de factorielles}\label{sectionunicité}

Soit $\textbf{e}$ et $\textbf{f}$ deux suites d'entiers positifs. La série formelle $F_{(\textbf{e},\textbf{f}\,)}(z)=\sum_{n=0}^{\infty}\mathcal{Q}_{(\textbf{e},\textbf{f}\,)}(n)z^n$ est une série hypergéométrique (voir \cite[Lemma 4.1, p. 431]{Bober}). Nous allons caractériser les suites $(\alpha_1,\dots,\alpha_r)\in\mathbb{C}^r$ et $(\beta_1,\dots,\beta_s)\in(\mathbb{C}\setminus\mathbb{Z}_{\leq 0})^s$ telles qu'il existe deux suites d'entiers positifs $\textbf{e}$ et $\textbf{f}$ vérifiant
\begin{equation}\label{écriturunik}
_{r}F_{s}\left(\begin{array}{c}\alpha_1,\dots,\alpha_r\\\beta_1,\dots,\beta_s\end{array};Cz\right):=\displaystyle\sum_{n=0}^{\infty}C^n\frac{(\alpha_1)_n\cdots(\alpha_r)_n}{(\beta_1)_n\cdots(\beta_s)_n}\frac{z^n}{n!}=\sum_{n=0}^{\infty}\frac{(e_1n)!\cdots(e_{q_1}n)!}{(f_1n)!\cdots(f_{q_2}n)!}z^n=:F_{(\textbf{e},\textbf{f}\,)}(z),
\end{equation}
où $(x)_{n} := x(x+1)\cdots(x+n-1)$ pour $n\geq 1$ et $(x)_0 = 1$ (symbole de Pochhammer). Nous allons avoir besoin du résultat d'unicité suivant.

\begin{propo}\label{unicité}
Soit $C$ et $C'$ deux constantes strictement positives, $\alpha_1,...,\alpha_r$, $\alpha_1',...,\alpha_{r'}'$, $\beta_1,..., \beta_s$ et $\beta_1',..., \beta_{s'}'$ des complexes où aucun des $\beta_j$ ni aucun des $\beta_j'$ n'est un entier négatif tels que, pour tout $(i,j)\in\{1,\dots,r\}\times\{1,\dots,s\}$, on ait $\alpha_i\neq\beta_j$, et, pour tout $(i,j)\in\{1,\dots,r'\}\times\{1,\dots,s'\}$, on ait $\alpha_i'\neq\beta_j'$. Si, pour tout $n\in\mathbb{N}$, on a
\begin{equation}\label{id uni}
C^n\frac{(\alpha_1)_n\cdots(\alpha_r)_n}{n!(\beta_1)_n\cdots(\beta_s)_n}=C'^n\frac{(\alpha_1')_n\cdots(\alpha_{r'}')_n}{n!(\beta_1')_n\cdots(\beta_{s'}')_n},
\end{equation}
alors on a $C=C'$, $r=r'$, $s=s'$ et il existe une permutation $\sigma\in S_r$ et une permutation $\tau\in S_s$ telles que, pour tout $i\in\{1,\dots,r\}$, on ait $\alpha_i=\alpha_{\sigma(i)}'$ et, pour tout $j\in\{1,\dots,s\}$, on ait $\beta_j=\beta_{\tau(j)}'$.
\end{propo}

\begin{proof}
Soit $i$ dans $\mathbb{N}$, en divisant l'identité \eqref{id uni} avec $n=i+1$ par celle avec $n=i$ on obtient, pour tout $i$ dans $\mathbb{N}$, $i\geq 1$, $C\frac{(\alpha_1+i)\cdots(\alpha_r+i)}{(\beta_1+i)\cdots(\beta_s+i)}=C'\frac{(\alpha_1'+i)\cdots(\alpha_{r'}'+i)}{(\beta_1'+i)\cdots(\beta_{s'}'+i)}$. Ainsi, les deux fractions rationnelles $P(X):=C\frac{(\alpha_1+X)\cdots(\alpha_r+X)}{(\beta_1+X)\cdots(\beta_s+X)}$ et $Q(X):=C'\frac{(\alpha_1'+X)\cdots(\alpha_{r'}'+X)}{(\beta_1'+X)\cdots(\beta_{s'}'+X)}$ sont égales. Elles ont donc les mêmes racines avec mêmes multiplicités et les mêmes pôles avec mêmes ordres. D'où le résultat.
\end{proof}

Afin de caractériser la forme des suites $(\alpha_1,\dots,\alpha_r)$ et $(\beta_1,\dots,\beta_s)$ vérifiant \eqref{écriturunik}, on définit un certain type de partition de multi-ensemble.

\begin{Définition}
Pour $N\in\mathbb{N}$, $N\geq 1$, on note $R_N := \left\{w/N\;:\;1\leq w\leq N,\; (w,N)=1\right\}$.
On dira qu'un multi-ensemble $\{\alpha_1,\dots,\alpha_r\}$ de réels est $R$-partitionné selon le multi-ensemble $\{N_1,\dots,N_k\}$ s'il existe des entiers strictement positifs $N_1,\dots,N_k$ et une partition $E_1,\dots,E_k$ de $\{1,\dots,r\}$ tels que, pour tout $j\in\{1,\dots,k\}$, $Card(E_j)=\varphi(N_j)$ et $\left\{\alpha_i\;:\;i\in E_j\right\}=R_{N_j}$.
\end{Définition}

Par exemple, le multi-ensemble $\{1/3,1/2,1/2,2/3\}$ est $R$-partitionné selon le multi-ensemble $\{2,2,3\}$.

Si $N$ est un entier non nul, on note $ C_N:=N^{\varphi(N)}\prod_{p\mid N} p^{\frac{\varphi(N)}{p-1}}\in\mathbb{N}$, $C_N\geq 1$. Si $\alpha$ est une suite $R$-partitionnée selon $(N_1,\dots,N_k)$, on note $C_{\alpha}:=C_{N_1}\cdots C_{N_k}$. Si $\alpha$ et $\beta$ sont deux suites $R$-partitionnées, on note $C_{(\alpha,\beta)}:=C_{\alpha}/C_{\beta}$.
On note $\textsl{Q}$ l'ensemble des suites de la forme $\mathcal{Q}_{(\textbf{e},\textbf{f}\;)}(n):=\frac{(e_1 n)!\cdots (e_{q_1} n)!}{(f_1 n)!\cdots (f_{q_2} n)!}$, où $q_1,q_2\in\mathbb{N}$, $q_1\,q_2\neq 0$, $e_1,\dots,e_{q_1},f_1,\dots,f_{q_2}\in\mathbb{N}$. On note $\textsl{P}$ l'ensemble des suites de la forme $\mathcal{P}_{(\alpha,\beta)}(n):=C_{(\alpha,\beta)}^n\frac{(\alpha_1)_n\cdots(\alpha_r)_n}{n!(\beta_1)_n\cdots(\beta_s)_n}$, où $r,s\in\mathbb{N}$, $rs\neq 0$, $(\alpha_1,\dots,\alpha_r)$ et $(\beta_1,\dots,\beta_s)$ sont deux suites $R$-partitionnées.

\begin{propo}\label{propo equiv}
On a $\textsl{P}=\textsl{Q}$ et, si $C_{(\alpha,\beta)}^n\frac{(\alpha_1)_n\cdots(\alpha_r)_n}{n!(\beta_1)_n\cdots(\beta_s)_n}=\frac{(e_1 n)!\cdots (e_{q_1} n)!}{(f_1 n)!\cdots (f_{q_2} n)!}$ pour tout $n\geq 0$,
alors $ C_{(\alpha,\beta)}=\frac{e_1^{e_1}\cdots e_{q_1}^{e_{q_1}}}{f_1^{f_1}\cdots f_{q_2}^{f_{q_2}}}$ et $ r-s-1=|\textbf{e}|-|\textbf{f}\,|$.
\end{propo}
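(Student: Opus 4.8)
The plan is to reduce everything to the Gauss--Legendre multiplication formula. First I would establish the master identity
\[
(Nn)! = N^{Nn}\prod_{m\mid N}\prod_{x\in R_m}(x)_n\qquad(N\geq 1),
\]
which follows from $\prod_{j=0}^{N-1}\bigl(\tfrac{1+j}{N}\bigr)_n=\tfrac{(Nn)!}{N^{Nn}}$ together with the observation that reducing the fractions $\{j/N:1\leq j\leq N\}$ to lowest terms yields precisely the disjoint union $\bigsqcup_{m\mid N}R_m$ (here $(N/N)_n=(1)_n=n!$ and $\sum_{m\mid N}\varphi(m)=N$). By multiplicative M\"obius inversion over the divisor lattice this produces the dual identity $C_N^{\,n}\prod_{x\in R_N}(x)_n=\prod_{m\mid N}\bigl((mn)!\bigr)^{\mu(N/m)}$, where $C_N=\prod_{m\mid N}m^{m\mu(N/m)}$; a short $q$-adic valuation computation (splitting a divisor $d=q^bm$ and using $\sum_{m\mid M}m\,\mu(M/m)=\varphi(M)$) shows this matches the constant $N^{\varphi(N)}\prod_{p\mid N}p^{\varphi(N)/(p-1)}$ of the statement, and in particular yields $\prod_{m\mid N}C_m=N^N$ with $C_1=1$.

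For the inclusion $\textsl{Q}\subseteq\textsl{P}$ I would apply the master identity to each $(e_in)!$ and $(f_in)!$, obtaining
\[
\mathcal{Q}_{(\textbf{e},\textbf{f}\,)}(n)=\Bigl(\tfrac{\prod_ie_i^{e_i}}{\prod_if_i^{f_i}}\Bigr)^{\!n}(n!)^{q_1-q_2}\frac{\prod_{x\in A}(x)_n}{\prod_{x\in B}(x)_n},
\]
where $A=\biguplus_i\{j/e_i:1\leq j<e_i\}$ and $B=\biguplus_i\{j/f_i:1\leq j<f_i\}$ are, after reduction to lowest terms, $R$-partitioned multisets (each $\{j/e_i\}$ contributing the blocks $R_m$ with $m\mid e_i$, $m>1$). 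Since $1\in R_1$ and $C_1=1$, I would absorb the powers of $n!=(1)_n$ into additional $R_1$-blocks of $\alpha$ or $\beta$ so as to leave exactly one explicit $n!$ in the denominator and the correct net power $(n!)^{q_1-q_2}$; because $\prod_{m\mid e_i,\,m>1}C_m=e_i^{e_i}$, the intrinsic constant $C_{(\alpha,\beta)}$ then equals $\prod e_i^{e_i}/\prod f_i^{f_i}$, as needed. The reverse inclusion $\textsl{P}\subseteq\textsl{Q}$ is symmetric: starting from an $R$-partitioned $\mathcal{P}_{(\alpha,\beta)}$ I would replace each block $\prod_{x\in R_N}(x)_n$ by $C_N^{-n}\prod_{m\mid N}((mn)!)^{\mu(N/m)}$ via the dual identity, collect the total integer exponent $c_d$ of each $(dn)!$, and split the $c_d$ by sign to realise $\mathcal{P}_{(\alpha,\beta)}$ as a ratio $\prod(e_in)!/\prod(f_in)!$. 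In both directions the side conditions $q_1q_2\neq0$ and $rs\neq0$ are arranged by padding with a cancelling pair of $R_1$-blocks, i.e. a factor $n!/n!$, which alters neither the value nor the constant.

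For the second assertion I would argue through the ratio of consecutive terms rather than the partition structure. Under the hypothesis $\mathcal{P}_{(\alpha,\beta)}=\mathcal{Q}_{(\textbf{e},\textbf{f}\,)}$, both sequences satisfy $u(n+1)/u(n)=R(n)$ for one and the same rational function
\[
R(n)=C_{(\alpha,\beta)}\frac{\prod_{i}(\alpha_i+n)}{(n+1)\prod_{j}(\beta_j+n)}=\frac{\prod_i\prod_{k=1}^{e_i}(e_in+k)}{\prod_i\prod_{k=1}^{f_i}(f_in+k)},
\]
as in the proof of Proposition \ref{unicit�}. Comparing the degree of the numerator minus that of the denominator gives $r-s-1=|\textbf{e}|-|\textbf{f}\,|$, and comparing leading coefficients (the dominant term of the right-hand side being $\tfrac{\prod e_i^{e_i}}{\prod f_i^{f_i}}\,n^{|\textbf{e}|-|\textbf{f}\,|}$) gives $C_{(\alpha,\beta)}=\prod e_i^{e_i}/\prod f_i^{f_i}$.

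The step I expect to require the most care is the bookkeeping with $R$-partitioned multisets together with the matching of constants: one must check that the blocks $R_m$ arising from reducing the fractions $j/e_i$ reassemble exactly into an admissible $R$-partition, and that the ``accidental'' constant $\prod e_i^{e_i}/\prod f_i^{f_i}$ coming from Gauss' formula coincides with the intrinsically defined $C_{(\alpha,\beta)}$. This is precisely where the two identities $\prod_{m\mid N}C_m=N^N$ and $C_N=\prod_{m\mid N}m^{m\mu(N/m)}$ do the work, so establishing them cleanly at the outset is the crux of the argument.
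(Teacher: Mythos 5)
Your proof is correct, and for the equality $\textsl{P}=\textsl{Q}$ it runs along essentially the same lines as the paper's: your ``master identity'' is exactly the combination of the Gauss-type splitting \eqref{eq4}--\eqref{eq1} with the decomposition \eqref{R-partition} of $\{1/e,\dots,(e-1)/e\}$ into the blocks $R_d$, $d\mid e$, and the ``dual identity'' you extract by M\"obius inversion is a restatement of Zudilin's lemma (lemme \ref{propoZudi 1}), which the paper cites rather than re-derives. The genuine divergence is in the second assertion. The paper reads $r-s-1=|\textbf{e}|-|\textbf{f}\,|$ off the parameter count in \eqref{eq1}, and proves $C=C_{(\alpha,\beta)}$ by contradiction: if the constants differed, dividing the two factorial representations \eqref{eq5} and \eqref{eq6} would exhibit a non-trivial factorial ratio equal to a geometric progression $\lambda^{n}$ with $\lambda\neq1$, which the uniqueness of hypergeometric parameters (the paper's first proposition) forbids. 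You obtain both facts simultaneously by comparing the term ratios $u(n+1)/u(n)$ as rational functions of $n$ --- the degree gives $r-s-1=|\textbf{e}|-|\textbf{f}\,|$ and the leading coefficient gives $C_{(\alpha,\beta)}=\prod_i e_i^{e_i}/\prod_j f_j^{f_j}$. This is more direct and slightly more robust: it applies verbatim to an arbitrary $R$-partitioned pair $(\alpha,\beta)$ satisfying the hypothesis rather than only to the pair constructed from $(\textbf{e},\textbf{f}\,)$, and it dispenses with the auxiliary pair and the reductio. Your second route to the constant, via $\prod_{m\mid N}C_m=N^{N}$ inside the $\textsl{Q}\subseteq\textsl{P}$ construction, is likewise valid and does not appear in the paper; either of your two computations of $C_{(\alpha,\beta)}$ would serve as a clean replacement for the paper's contradiction argument.
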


Cette proposition caractérise complètement les fonctions hypergéométriques dont les coefficients peuvent se mettre sous forme de quotients de factorielles. Pour démontrer la proposition~\ref{propo equiv}, on va utiliser un lemme dû à Zudilin (\cite[Lemma 4, p. 609]{Zudilin}).

\begin{lemme}[Zudilin]\label{propoZudi 1}
Soit $N\geq 2$ un entier. On écrit $N=p_1^{a_1}p_2^{a_2}\cdots p_{\ell}^{a_{\ell}}$ sa décomposition en produit de facteurs premiers. Pour tout $n\in\mathbb{N}$, on a alors $C_N^n\frac{\prod_{\alpha\in R_N}(\alpha)_n}{(n!)^{\varphi(N)}}=\frac{(e_1 n)!\cdots (e_{q_1} n)!}{(f_1 n)!\cdots (f_{q_2} n)!}$, où
\begin{small}
$$
\{e_i\}_{i=1,...,q_1}=\left\{N,\frac{N}{p_{j_1}p_{j_2}},\frac{N}{p_{j_1}p_{j_2}p_{j_3}p_{j_4}},...\right\}_{1\leq j_1 < j_2 <...\leq \ell},
$$
$$
\{f_j\}_{j=1,...,q_2}=\left\{1,...,1,\frac{N}{p_{j_1}},\frac{N}{p_{j_1}p_{j_2}p_{j_3}},...\right\}_{1\leq j_1 < j_2 <...\leq \ell}
$$
\end{small}
et de plus, $|\textbf{e}|=|\textbf{f}\,|$.
\end{lemme}

\begin{proof}[Démonstration de la proposition \ref{propo equiv}]
Montrons que $\textsl{P}\subset\textsl{Q}$.

Soit $\alpha:=(\alpha_1,\dots,\alpha_r)$ et $\beta:=(\beta_1,\dots,\beta_s)$ deux suites $R$-partitionnées respectivement selon $(N_1,\dots,N_k)$ et $(N_1',\dots,N_{k'}')$. Quitte à réordonner les suites $(N_1,\dots,N_k)$ et $(N_1',\dots,N_{k'}')$, on peut supposer qu'il existe $k_0\in\{0,\dots,k\}$ et $k_0'\in\{0,\dots,k'\}$ tels que, pour tout $i\in\{1,\dots,k_0\}$ et tout $j\in\{1,\dots,k_0'\}$, on ait $N_i\geq 2$ et $N_j'\geq 2$, et, pour tout $i\in\{k_0+1,\dots,k\}$ et tout $j\in\{k_0'+1,\dots,k'\}$, on ait $N_i=1$ et $N_j'=1$.  Pour tout $n\in\mathbb{N}$, on peut alors écrire
\begin{equation}\label{cette}
\frac{(\alpha_1)_n\dots(\alpha_r)_n}{n!(\beta_1)_n\dots(\beta_s)_n}=\frac{\prod_{i=1}^{k_0}\prod_{\alpha\in R_{N_i}}(\alpha)_n}{\prod_{j=1}^{k_0'}\prod_{\beta\in R_{N_j'}}(\beta)_n}(1)_n^{k-k_0-(k'-k_0')-1}.
\end{equation}
On a $C_{(\alpha,\beta)}=\frac{\prod_{i=1}^kC_{N_i}}{\prod_{j=1}^{k'}C_{N_j'}}=\frac{\prod_{i=1}^{k_0}C_{N_i}}{\prod_{j=1}^{k_0'}C_{N_j'}}$, car $C_1=1$. Ainsi, en multipliant \eqref{cette} par $C_{(\alpha,\beta)}^n$, pour tout $n\in\mathbb{N}$, on obtient 
\begin{multline*}
C_{(\alpha,\beta)}^n\frac{(\alpha_1)_n\cdots(\alpha_r)_n}{n!(\beta_1)_n\cdots(\beta_s)_n}=\\
\left(\prod_{i=1}^{k_0} C_{N_i}^n\frac{\prod_{\alpha\in R_{N_i}}(\alpha)_n}{(n!)^{\varphi(N_i)}}\right)\left(\prod_{j=1}^{k_0'}\frac{1}{C_{N_j'}^n}\frac{(n!)^{\varphi(N_j')}}{\prod_{\beta\in R_{N_j'}}(\beta)_n}\right)(n!)^{\sum_{i=1}^{k_0}\varphi(N_i)-\sum_{j=1}^{k_0'}\varphi(N_j')}(1)_n^{k-k_0-(k'-k_0')-1}.
\end{multline*}
On a $(1)_n^{k-k_0-(k'-k_0')-1}=n!^{k-k_0-(k'-k_0')-1}=n!^{\sum_{i=k_0+1}^k\varphi(N_i)-\sum_{j=k_0'+1}^{k'}\varphi(N_j')-1}$, car, pour tout $i\in\{k_0+1,\dots,k\}$ et tout $j\in\{k_0'+1,\dots,k'\}$, on a $\varphi(N_i)=\varphi(N_j')=\varphi(1)=1$.
Ainsi, d'après le lemme \ref{propoZudi 1}, il existe des entiers strictement positifs $e_1,\dots,e_{q_1},f_1,\dots,f_{q_2}$ tels que 
$$
C_{(\alpha,\beta)}^n\frac{(\alpha_1)_n\cdots(\alpha_r)_n}{n!(\beta_1)_n\cdots(\beta_s)_n}=\frac{(e_1 n)!\cdots (e_{q_1} n)!}{(f_1 n)!\cdots (f_{q_2} n)!}(n!)^{\sum_{i=1}^k\varphi(N_i)-\sum_{j=1}^{k'}\varphi(N_j')-1}.
$$
D'où le fait que $\textsl{P}\subset\textsl{Q}$.

\medskip Montrons l'inclusion inverse $\textsl{Q}\subset\textsl{P}$.

Pour tout $\ell\in\{1,\dots,e_1\}$, on a
\begin{equation}\label{eq4}
\left(\frac{\ell}{e_1}\right)_n=\frac{\ell}{e_1}\left(\frac{\ell}{e_1}+1\right)\cdots\left(\frac{\ell}{e_1}+n-1\right)=\frac{\ell}{e_1}\frac{\ell+e_1}{e_1}\cdots\frac{\ell+(n-1)e_1}{e_1}.
\end{equation}
Les numérateurs de \eqref{eq4} correspondent aux nombres dans $\{1,\dots,n e_1\}$ qui sont congrus à $\ell$ modulo $e_1$.
Ainsi, si $e_1\geq 2$, alors on a
$$
(e_1 n)!=\left(\frac{1}{e_1}\right)_n \left(\frac{2}{e_1}\right)_n\cdots\left(\frac{e_1}{e_1}\right)_n e_1^{e_1 n}=\left(\frac{1}{e_1}\right)_n\cdots\left(\frac{e_1-1}{e_1}\right)_n (1)_n \left(e_1^{e_1}\right)^n
$$
et si $e_1=1$, alors on a simplement $n!=(1)_n$. On peut donc écrire
\begin{equation}\label{eq1}
\frac{(e_1 n)!\cdots (e_{q_1} n)!}{(f_1 n)!\cdots (f_{q_2} n)!}=C^n\frac{\prod_{\underset{e_i\geq 2}{1\leq i\leq q_1}}\left(\left(\frac{1}{e_i}\right)_n\cdots\left(\frac{e_i-1}{e_i}\right)_n\right)}{n!\prod_{\underset{f_j\geq 2}{1\leq j\leq q_2}}\left(\left(\frac{1}{f_j}\right)_n\cdots\left(\frac{f_j-1}{f_j}\right)_n\right)}(1)_n^{q_1-q_2+1},
\end{equation}
où $C:=\frac{e_1^{e_1}\cdots e_{q_1}^{e_{q_1}}}{f_1^{f_1}\cdots f_{q_2}^{f_{q_2}}}$. On a donc $r-s-1=\sum_{i=1}^{q_1} (e_i-1)-\sum_{j=1}^{q_2} (f_j-1)+q_1-q_2+1-1=|\textbf{e}|-|\textbf{f}\,|$.

Si $e_1\geq 2$, alors on a
\begin{equation}\label{R-partition}
\begin{small}
\left\{\frac{1}{e_1},\dots,\frac{e_1-1}{e_1}\right\}=\bigcup_{d\mid e_1,\; d\geq 2}R_d.
\end{small}
\end{equation}
En effet, notons $e_1=p_1^{a_1}\cdots p_s^{a_s}$ la décomposition en facteurs premiers de $e_1$. On a 
$$
\left\{1,\dots,e_1-1\right\}=\underset{(b_1,\dots,b_s)\neq(a_1,\dots,a_s)}{\bigcup_{0\leq b_1\leq a_1,\dots,0\leq b_s\leq a_s}}\left\{p_1^{b_1}\cdots p_s^{b_s}r \;:\; (r, p_1\cdots p_s)=1, 1\leq r<p_1^{a_1-b_1}\cdots p_s^{a_s-b_s}\right\}.
$$
Ainsi, on obtient
\begin{small}
\begin{align*}
\left\{\frac{1}{e_1},\dots,\frac{e_1-1}{e_1}\right\}
&=\underset{(b_1,\dots,b_s)\neq(a_1,\dots,a_s)}{\bigcup_{0\leq b_1\leq a_1,\dots,0\leq b_s\leq a_s}}\left\{\frac{r}{p_1^{a_1-b_1}\cdots p_s^{a_s-b_s}}\;:\; (r, p_1\cdots p_s)=1, r<p_1^{a_1-b_1}\cdots p_s^{a_s-b_s}\right\}\\
&=\bigcup_{d\mid e_1,\; d\geq 2}R_d.
\end{align*}
\end{small}
En utilisant l'identité \eqref{R-partition} dans \eqref{eq1}, on obtient
$$
\frac{(e_1 n)!\cdots (e_{q_1} n)!}{(f_1 n)!\cdots (f_{q_2} n)!}=C^n\frac{\prod_{i=1}^{q_1}\prod_{\underset{d\geq 2}{d\mid e_i}}\prod_{\alpha\in R_d}(\alpha)_n}{n!\prod_{j=1}^{q_2}\prod_{\underset{d\geq 2}{d\mid f_j}}\prod_{\beta\in R_d}(\beta)_n}\left(\prod_{\gamma\in R_1}(\gamma)_n\right)^{q_1-q_2+1},
$$
où ici $\prod_{\gamma\in R_1}(\gamma)_n=(1)_n$. Si $q_1-q_2+1$ est positif alors on regroupe le produit des $\gamma$ avec celui des $\alpha$ au numérateur, et si $q_1-q_2+1$ est négatif alors on regroupe le produit des $\gamma$ avec celui des $\beta$ au dénominateur. On indexe les $\alpha$ et $\beta$ afin d'obtenir l'écriture 
\begin{equation}\label{eq5}
\frac{(e_1 n)!\cdots (e_{q_1} n)!}{(f_1 n)!\cdots (f_{q_2} n)!}=C^n\frac{(\alpha_1)_n\cdots(\alpha_r)_n}{n!(\beta_1)_n\cdots(\beta_s)_n},
\end{equation}
où les suites $\alpha:=(\alpha_i)_{i=1,\dots,r}$ et $\beta:=(\beta_j)_{j=1,\dots,s}$ sont $R$-partitionnées.

Il ne reste plus qu'à montrer que $C=C_{(\alpha,\beta)}$. Comme $\textsl{P}\subset\textsl{Q}$, il existe des entiers strictement positifs $e_1',\dots,e_{q_1'}',f_1',\dots,f_{q_2'}'$ tels que, pour tout $n\in\mathbb{N}$, on ait
\begin{equation}\label{eq6}
C_{(\alpha,\beta)}^n\frac{(\alpha_1)_n\cdots(\alpha_r)_n}{n!(\beta_1)_n\cdots(\beta_s)_n}=\frac{(e_1' n)!\cdots (e_{q_1'}' n)!}{(f_1' n)!\cdots (f_{q_2'}' n)!}.
\end{equation}
En divisant le terme de gauche de l'égalité \eqref{eq5} par le terme de droite de l'égalité \eqref{eq6}, on obtient, pour tout $n\in\mathbb{N}$,
\begin{equation}\label{eq7}
\frac{(e_1 n)!\cdots (e_{q_1} n)!(f_1' n)!\cdots (f_{q_2'}' n)!}{(f_1 n)!\cdots (f_{q_2} n)!(e_1' n)!\cdots (e_{q_1'}' n)!}=\left(\frac{C}{C_{(\alpha,\beta)}}\right)^n.
\end{equation}
Raisonnons par l'absurde et supposons que $C$ soit différent de $C_{(\alpha,\beta)}$. Il existe deux suites d'entiers strictement positifs disjointes $(c_1,\dots,c_k)$ et $(d_1,\dots,d_{\ell})$  telles que, pour tout $n\in\mathbb{N}$, on ait 
$$
\frac{(e_1 n)!\cdots (e_{q_1} n)!(f_1' n)!\cdots (f_{q_2'}' n)!}{(f_1 n)!\cdots (f_{q_2} n)!(e_1' n)!\cdots (e_{q_1'}' n)!}=\frac{(c_1 n)!\cdots (c_k n)!}{(d_1 n)!\cdots (d_{\ell} n)!}.
$$
Comme $C/C_{(\alpha,\beta)}\neq 1$ et d'après l'identité \eqref{eq7}, $(c_1,\dots,c_k)$ et $(d_1,\dots,d_{\ell})$ ne peuvent être simultanément vides.
Soit $M:=\max(c_1,\dots,c_k,d_1,\dots,d_{\ell})$. D'après \eqref{eq1} et par unicité de l'écriture sous forme de coefficients hypergéométriques \textit{i.e.} proposition \ref{unicité}, le symbole de Pochhammer $(1/M)_n$ devrait apparaître dans le terme de droite de l'égalité \eqref{eq7}, mais ce n'est pas le cas. D'où la contradiction.
\end{proof}

\subsection{Description des sauts de l'application $\Delta$ de Landau}\label{section delta facto}

Dans cette partie, nous allons montrer que les abscisses et les amplitudes des sauts effectués par $\Delta_{(\textbf{e},\textbf{f}\;)}$ sur $[0,1]$ apparaissent naturellement dans la réécriture de $\mathcal{Q}_{(\textbf{e},\textbf{f}\;)}$ sous la forme $\mathcal{P}_{(\alpha,\beta)}$.

\begin{propo}\label{veri num conj}
Soit $\textbf{e}$ et $\textbf{f}$ deux suites finies d'entiers strictement positifs. On peut écrire de manière unique la suite $\mathcal{Q}_{(\textbf{e},\textbf{f}\;)}$ sous la forme
\begin{equation}\label{écriture 3}
\mathcal{Q}_{(\textbf{e},\textbf{f}\;)}(n)=C^n(\gamma_1)_n^{m_1}\dots (\gamma_t)_n^{m_t},\,n\geq 0,
\end{equation}
où $C$ est une constante strictement positive, $0<\gamma_1<\dots<\gamma_t\leq 1$ sont des rationnels et les $m_1,\dots,m_t$ sont dans $\mathbb{Z}\setminus\{0\}$. Les sauts de $\Delta_{(\textbf{e},\textbf{f}\;)}$ sur $[0,1]$ se font aux abscisses $\gamma_1,\dots,\gamma_t$ avec $m_i$ pour amplitude (positive ou négative) en $\gamma_i$.
\end{propo}

\begin{proof}
On écrit $\textbf{e}:=(e_1,\dots,e_{q_{1}})$ et $\textbf{f}:=(f_1,\dots,f_{q_2})$. L'existence et l'unicité de l'écriture \eqref{écriture 3} découlent respectivement des propositions \ref{propo equiv} et \ref{unicité} de la partie \ref{sectionunicité}.
\'Etudions les sauts de la fonction $\Delta_{(\textbf{e},\textbf{f}\;)}$. 

Si $c\in\mathbb{N}$, $c\geq 1$, alors la fonction $[0,1]\longrightarrow\mathbb{Z},x\longmapsto \lfloor cx\rfloor$ effectue un saut d'amplitude 1 en $\frac{1}{c},\frac{2}{c},\dots,\frac{c-1}{c}$ et 1. D'après \eqref{eq1}, pour tout $n\geq 0$, on a 
$$
\mathcal{Q}_{(\textbf{e},\textbf{f}\;)}(n)=C^n\frac{\left(\frac{1}{e_1}\right)_n\dots\left(\frac{e_1-1}{e_1}\right)_n\dots\left(\frac{1}{e_{q_1}}\right)_n\dots\left(\frac{e_{q_1}-1}{e_{q_1}}\right)_n}{\left(\frac{1}{f_1}\right)_n\dots\left(\frac{f_1-1}{f_1}\right)_n\dots\left(\frac{1}{f_{q_2}}\right)_n\dots\left(\frac{f_{q_2}-1}{f_{q_2}}\right)_n}(1)_n^{{q_1}-{q_2}}.
$$
Ainsi, en simplifiant le quotient et en regroupant les symboles de Pochhammer identiques, on obtient l'écriture $\mathcal{Q}_{(\textbf{e},\textbf{f}\;)}(n)=C^n(\gamma_1)_n^{m_1}\dots (\gamma_t)_n^{m_t}$, $n\geq 0$, où les $\gamma_1<\dots<\gamma_t$ correspondent effectivement aux abscisses des sauts de $\Delta_{(\textbf{e},\textbf{f}\;)}$ et les $m_i$, $1\leq i\leq t$, à leur amplitude.
\end{proof}

\begin{Remarque}
Des résultats analogues à ceux de la proposition \ref{veri num conj} sont mentionnés dans \cite{Villegas} par Rodriguez-Villegas puis précisés dans \cite{Bober} par Bober.
\end{Remarque}

Supposons que, pour tout $x\in[0,1]$, on ait $\Delta_{(\textbf{e},\textbf{f}\,)}(x)\geq 0$. Alors, comme $\Delta_{(\textbf{e},\textbf{f}\,)}(0)=\Delta_{(\textbf{e},\textbf{f}\,)}(\gamma_t)=0$, $\gamma_1$ correspond à un saut d'amplitude positive et $\gamma_t$ correspond à un saut d'amplitude négative. On remarque grâce à l'identité \eqref{eq1} que $\gamma_1=1/M_{(\textbf{e},\textbf{f}\,)}$ et $\gamma_t=(M_{(\textbf{e},\textbf{f}\,)}-1)/M_{(\textbf{e},\textbf{f}\,)}$ ou $1$, où $M_{(\textbf{e},\textbf{f}\,)}=\max(e_1,\dots,e_{q_1},f_1,\dots,f_{q_2})$. Or $\gamma_1$ correspond à un saut d'amplitude positive donc $M_{(\textbf{e},\textbf{f}\,)}=\max(e_1,\dots,e_{q_1}):=M_{\textbf{e}}$ et $\gamma_t=1$ car $(M_{\textbf{e}}-1)/M_{\textbf{e}}$ correspond aussi à l'abscisse d'un saut d'amplitude positive. En résumé, on a $\gamma_1=1/M_{\textbf{e}}$, $\gamma_t=1$ et, pour tout $x\in\left[(M_{\textbf{e}}-1)/M_{\textbf{e}},1\right[$, on a $\Delta_{(\textbf{e},\textbf{f}\,)}(x)\geq 1$.
\medskip

Supposons de plus que, pour tout $x\in[1/M_{\textbf{e}},1[$, on ait $\Delta_{(\textbf{e},\textbf{f}\,)}(x)\geq 1$. On sait alors que $(M_{\textbf{e}}-1)/M_{\textbf{e}}$ correspond à l'abscisse d'un saut d'amplitude positive. 

Si $M_{\textbf{e}}\geq 3$, alors on a $(M_{\textbf{e}}-1)/M_{\textbf{e}}>1/M_{\textbf{e}}$ et, pour tout $x\in\left[1/M_{\textbf{e}};(M_{\textbf{e}}-1)/M_{\textbf{e}}\right[$ on a $\Delta_{(\textbf{e},\textbf{f}\,)}(x)\geq 1$, donc pour tout $x\in\left[(M_{\textbf{e}}-1)/M_{\textbf{e}},1\right[$ on a $\Delta_{(\textbf{e},\textbf{f}\,)}(x)\geq 2$. Ainsi $m_t\leq -2$, ce qui signifie que dans l'écriture des coefficients hypergéométriques sous la forme $C_{(\alpha,\beta)}^n\frac{(\alpha_1)_n\dots(\alpha_r)_n}{(1)_n(\beta_1)_n\dots(\beta_{r-1})_n}$, au moins un des $\beta_i$ est égal à 1. D'après \cite[p. 310]{Dwork 1}, ceci implique que l'équation différentielle fuchsienne associée à $F_{(\textbf{e},\textbf{f}\,)}$ admet une solution de type logarithmique à l'origine $G_{(\textbf{e},\textbf{f}\,)}(z)+\log(z)F_{(\textbf{e},\textbf{f}\,)}(z)$, où $G_{(\textbf{e},\textbf{f}\,)}$ est définie par l'identité \eqref{definition de G facto} donnée dans la partie \ref{section mirror map}.

Si $M_{\textbf{e}}=2$, alors $\mathcal{Q}_{(\textbf{e},\textbf{f}\,)}(n)$ est de la forme $\mathcal{Q}_{(\textbf{e},\textbf{f}\,)}(n)=\frac{(2n)!^j}{n!^{2j}}=\left(2^{2j}\right)^n\frac{\left(1/2\right)_n^j}{(1)_n^j}$, où $j\in\mathbb{N}$, $j\geq 1$. Ainsi le seul couple de suites $(\textbf{e},\textbf{f}\,)$ dont l'équation différentielle hypergéométrique associée n'admet pas de solution de type logarithmique à l'origine correspond au cas $j=1$.
\medskip

Nous allons maintenant montrer que si $\textbf{e}$ et $\textbf{f}$ sont deux suites d'entiers strictement positifs disjointes vérifiant $|\textbf{e}|=|\textbf{f}\,|$, alors les assertions suivantes sont équivalentes.
\medskip
\begin{itemize}
\item[$(i)$] $\Delta_{(\textbf{e},\textbf{f}\,)}$ est croissante sur $[0,1[$.
\item[$(ii)$] Il existe des entiers strictement positifs $N_1,\dots,N_k$ tels que $\textbf{e}$ est constituée des éléments du multi-ensemble $\bigcup_{i=1}^kA_{N_i}$ et $\textbf{f}$ est constituée des éléments du multi-ensemble $\bigcup_{i=1}^kB_{N_i}$, où les multi-ensembles $A_{N_i}$ et $B_{N_i}$ sont définis comme dans la partie \ref{Enonce du crit}.
\item[$(iii)$] L'équation différentielle fuchsienne associée à $F_{(\textbf{e},\textbf{f}\,)}$ a tous ses exposants égaux à $0$ à l'origine.
\end{itemize}
\medskip

Soit $(\alpha_1,\dots,\alpha_r)$ et $(\beta_1,\dots,\beta_s)$ les suites telles que $\mathcal{Q}_{(\textbf{e},\textbf{f}\,)}=\mathcal{P}_{(\alpha,\beta)}$.

Si $\Delta_{(\textbf{e},\textbf{f}\,)}$ est croissante sur $[0,1[$ alors $\beta_1=\dots=\beta_s=1$ et l'équation différentielle fuchsienne associée à $F_{(\textbf{e},\textbf{f}\,)}$ a donc tous ses exposants égaux à $0$ à l'origine (voir \cite{Schwarz}). Ainsi, on a $(i)\Rightarrow (iii)$. De plus, comme $\alpha$ est $R$-partitionnée, il existe des entiers strictement positifs $N_1,\dots,N_k$ tels que $\{\alpha_1,\dots,\alpha_r\}=\bigcup_{i=1}^kR_{N_i}$. Ainsi, d'après le lemme~\ref{propoZudi 1}, on obtient bien $(i)\Rightarrow(ii)$. 

Réciproquement, s'il existe des entiers strictement positifs $N_1,\dots,N_k$ tels que $\textbf{e}$ est constituée des éléments du multi-ensemble $\bigcup_{i=1}^kA_{N_i}$ et $\textbf{f}$ est constituée des éléments du multi-ensemble $\bigcup_{i=1}^kB_{N_i}$ alors le lemme \ref{propoZudi 1} montre que $\beta_1=\dots=\beta_s=1$. On a donc bien $(i)\Leftrightarrow(ii)$.

Si l'équation différentielle fuchsienne associée à $F_{(\textbf{e},\textbf{f}\,)}$ a tous ses exposants égaux à $0$ à l'origine, alors $\beta_1=\dots=\beta_s=1$ et $\Delta_{(\textbf{e},\textbf{f}\,)}$ est croissante sur $[0,1[$. On obtient bien $(i)\Leftrightarrow(iii)$.

\address{E. Delaygue, Institut Fourier, CNRS et Université Grenoble 1, 100 rue des Maths, BP 74, 38402 Saint-Martin-d'Hères cedex, France. Email : Eric.Delaygue@ujf-grenoble.fr}

\end{document}